\documentclass[a4paper,11pt]{amsart}
\usepackage{amssymb}
\usepackage{latexsym}
\usepackage{amsmath}
\usepackage{enumerate}
\usepackage{amsmath, hyperref}
\usepackage{geometry}
\usepackage{tikz}
\usepackage[all,pdf]{xy}
\geometry{a4paper}


\newtheorem{theorem}{Theorem}[section]
\newtheorem{lemma}[theorem]{Lemma}

\newtheorem{proposition}[theorem]{Proposition}

\theoremstyle{definition}
\newtheorem{definition}[theorem]{Definition}
\newtheorem{question}[theorem]{Question}
\newtheorem{fact}[theorem]{Fact}
\newtheorem{example}[theorem]{Example}

\newtheorem{remark}[theorem]{Remark}

\numberwithin{equation}{section}

\title{On the relationships between some meta-mathematical properties of arithmetical theories}

\author{Yong Cheng}

\thanks{I thank Albert Visser for enlightening discussions. The proof ideas of Theorem \ref{thm on Succ} and Theorem \ref{zero sharp does not imply creative} are from him and the proof details are due to the author. I am grateful to Albert Visser for his kind permission to use these ideas. I also thank  helpful comments  for improvements from the referees.}

\subjclass[2010]{03F40, 03F30, 03D35}

\keywords{Meta-mathematics of arithmetic, Representability, Rosser theories, Effectively inseparable theories, Recursively inseparable theories}

\begin{document}

\begin{abstract}
In this work, we aim at understanding incompleteness in an abstract way via metamathematical
properties of formal theories. We systematically examine the relationships between the following twelve important metamathematical properties of arithmetical theories: {\sf Rosser}, {\sf EI} (Effectively inseparable), {\sf RI} (Recursively inseparable), {\sf TP} (Turing persistent), {\sf EHU} (essentially hereditarily undecidable), {\sf EU} (essentially undecidable), {\sf Creative}, $\mathbf{0}^{\prime}$ (theories with Turing degree $\mathbf{0}^{\prime}$), {\sf REW} (all RE sets are weakly representable), {\sf RFD} (all recursive functions are definable), {\sf RSS} (all recursive sets are strongly representable), {\sf RSW} (all recursive sets are weakly representable). Given any two properties $P$ and $Q$ in the above list, we examine whether $P$ implies $Q$.
\end{abstract}

\maketitle

\section{Introduction}

Since G\"{o}del, research on incompleteness has greatly deepened our understanding of the incompleteness phenomenon. In this work, we aim at understanding incompleteness in an abstract way via metamathematical
properties of formal theories. We study metamathematical properties of RE theories which exhibit behaviors that can be related to the incompleteness/undecidability. In particular, we discuss  the relationships between  twelve important metamathematical properties of arithmetical theories in the literature as defined below.

All theories are supposed to be first-order  RE (recursively enumerable) theories. We equate a theory with the set of theorems provable in it. We always assume the arithmetization of the base theory. Given a sentence $\phi$, let $\ulcorner\phi\urcorner$ denote the G\"{o}del number of $\phi$.
Under arithmetization, we equate a set  of sentences  with the set of G\"{o}del numbers of those sentences.
In the following, we introduce the twelve meta-mathematical properties of arithmetical theories as follows: {\sf Rosser}, {\sf EI}, {\sf RI}, {\sf TP}, {\sf EHU}, {\sf EU}, {\sf Creative}, $\mathbf{0}^{\prime}$, {\sf REW}, {\sf RFD}, {\sf RSS}, {\sf RSW}.

We first introduce the notions of {\sf Rosser} and {\sf EI} theories.
In the
following definition, we denote the RE set with index $i$ by $W_i$ where $W_i=\{x: \exists y \, T_1(i,x, y)\}$ and $T_1(z,x, y)$ is the Kleene predicate (see \cite{Kleene}). We  recall that a pair $(A,B)$ of disjoint RE sets is  effectively inseparable $(\sf EI)$ if there is a recursive function $f(x,y)$ such that for any $i$ and $j$, if $A\subseteq W_i$ and $B\subseteq W_j$ with $W_i\cap W_j=\emptyset$, then $f(i,j)\notin W_i\cup W_j$.

\begin{definition}[Rosser theories, the nuclei of a theory, {\sf EI} theories]\label{def of 1.1}
Let $T$ be a consistent RE theory, and $(A,B)$ be a disjoint pair of RE sets.
\begin{enumerate}[(1)]
\item We say $(A,B)$ is \emph{separable} in $T$ if there is a formula $\phi(x)$ with only one free variable such that if $n\in A$, then $T\vdash \phi(\overline{n})$, and if $n\in B$, then $T\vdash \neg\phi(\overline{n})$.
\item We say $T$ is  \emph{Rosser}  if any disjoint  pair of RE sets is separable in $T$.
    \item The pair $(T_P, T_R)$ are called the \emph{nuclei} of a theory $T$, where  $T_P$ is the set of G\"{o}del numbers of sentences provable in $T$, and $T_R$ is the set of G\"{o}del numbers of sentences refutable in $T$ (i.e., $T_P=\{\ulcorner\phi\urcorner: T\vdash\phi\}$ and $T_R=\{\ulcorner\phi\urcorner: T\vdash\neg\phi\}$).

  \item We say $T$ is \emph{effectively inseparable} $(\sf EI)$ if $(T_P, T_R)$ is $\sf EI$.
\end{enumerate}
\end{definition}

Now we introduce the notions of $\sf RI$, {\sf TP} and {\sf EHU} theories.
Given a disjoint pair  $(A,B)$ of RE sets, we say $(A, B)$ is \emph{recursively inseparable} ($\sf RI$)   if  there is no recursive set $X\subseteq \mathbb{N}$ such that $A\subseteq X$ and $X\cap B=\emptyset$. The notion of Turing reducibility $({\sf \leq_T})$ is standard, see \cite[p.137]{Rogers87}.

\begin{definition}[$\sf RI$, {\sf TP} and {\sf EHU} theories]\label{def of 1.2}
Let $T$ be a consistent RE theory, and $(A,B)$ be a disjoint pair of RE sets.
\begin{enumerate}[(1)]
      \item We say $T$ is \emph{recursively inseparable} $(\sf RI)$ if $(T_P, T_R)$ is $\sf RI$.
\item We say $T$ is \emph{Turing persistent} ({\sf TP}) if $T$ is undecidable and for any consistent RE extension $S$ of $T$, $T {\sf \leq_T} S$.
 \item We say $T$ is \emph{hereditarily undecidable} ({\sf HU}) if every sub-theory $S$ of $T$ over the same language  is undecidable.
       \item We say $T$ is \emph{essentially hereditarily undecidable} ({\sf EHU}) if any consistent RE extension of $T$ is {\sf HU}.
\end{enumerate}
\end{definition}

Now we introduce the notions of {\sf EU}, {\sf Creative}, and $\mathbf{0}^{\prime}$ theories.
We recall that $A\subseteq\mathbb{N}$ is \emph{productive} if there exists a recursive function $f(x)$ (called a productive function for $A$) such that for every number $i$, if $W_i\subseteq A$, then $f(i)\in A- W_i$; and $A\subseteq\mathbb{N}$ is \emph{creative} if $A$ is RE and the complement of $A$ is productive. We denote the recursive Turing degree by $\mathbf{0}$, and the jump or completion of $\mathbf{0}$ by $\mathbf{0}^{\prime}$ (see \cite[p.256]{Rogers87}).

\begin{definition}[{\sf EU}, {\sf Creative}, and $\mathbf{0}^{\prime}$ theories]\label{def of 1.3}
Let $T$ be a consistent RE theory.
\begin{enumerate}[(1)]
    \item We say $T$ is \emph{essentially undecidable} ({\sf EU}) if any consistent RE extension of $T$ over the same language is undecidable.
\item We say $T$ is \emph{{\sf Creative}} if $T_P$ is creative.
\item We say $T$ is \emph{$\mathbf{0}^{\prime}$}  if $T$ has  Turing degree $\mathbf{0}^{\prime}$.
\end{enumerate}
\end{definition}

Now we introduce the notions of {\sf REW}, {\sf RFD}, {\sf RSS} and {\sf RSW} theories.
Given a consistent RE theory $T$, we denote the language of $T$ by $L(T)$.

\begin{definition}[{\sf REW}, {\sf RFD}, {\sf RSS} and {\sf RSW} theories]\label{property of theory}
Let $T$ be a consistent RE theory in the language including the constant symbol $\mathbf{0}$ and the function symbol $\mathbf{S}$. We define $\overline{n}=\mathbf{S}^n \mathbf{0}$ for $n \in \mathbb{N}$.
\begin{enumerate}
\item
We say an $n$-ary relation $R$ on $\mathbb{N}$ is \emph{weakly representable} in $T$ if there exists an $L(T)$-formula $\phi(x_1, \cdots, x_n)$ such that for any $\langle a_1, \cdots, a_n\rangle\in\mathbb{N}^n$, $R(a_1, \cdots, a_n)$ holds if and only if
$T\vdash \phi(\overline{a_1}, \cdots, \overline{a_n})$. We say  $\phi(x_1, \cdots, x_n)$ weakly represents the relation $\mathbf{R}$.

\item We say $T$ is {\sf REW} if any RE set is weakly representable in $T$.

  \item We say a total $n$-ary function $f$ on $\mathbb{N}$ is  \emph{definable} in $T$ if there exists an $L(T)$-formula $\phi(x_1, \cdots, x_n,y)$ such that for any $\langle a_1, \cdots, a_n\rangle\in\mathbb{N}^n$,
\[T\vdash \forall y [\phi(\overline{a_1}, \cdots, \overline{a_n},y)\leftrightarrow y= \overline{f(a_1, \cdots, a_n)}].\]
We say $\phi(x_1, \cdots, x_n,y)$ defines $f$ in $T$.
\item We say $T$ is  {\sf RFD} if every recursive function is definable in $T$.
 \item
We say an $n$-ary relation $R$ on $\mathbb{N}$ is \emph{strongly representable} in $T$ if there exists an $L(T)$-formula $\phi(x_1, \cdots, x_n)$ such that for any $\langle a_1, \cdots, a_n\rangle\in\mathbb{N}^n$, if $R(a_1, \cdots, a_n)$ holds, then
$T\vdash \phi(\overline{a_1}, \cdots, \overline{a_n})$, and if $R(a_1, \cdots, a_n)$ does not hold, then $T\vdash \neg\phi(\overline{a_1}, \cdots, \overline{a_n})$. We say  $\phi(x_1, \cdots, x_n)$ strongly represents the relation $R$.
  \item We say $T$ is  {\sf RSS} if any recursive set is strongly representable in $T$.
\item We say $T$ is  {\sf RSW} if any recursive set is weakly representable in $T$.
\end{enumerate}
\end{definition}

\begin{remark}\label{convention}
In this paper, we examine the following meta-mathematical properties of RE theories:
\begin{itemize}
  \item[{\sf Rosser}] {\sf Rosser} Theory: any disjoint  pair of RE sets is separable in the theory.
  \item[{\sf EI}] Effectively Inseparable:  the nuclei of the theory is $\sf EI$ (see Definition \ref{def of 1.1}).
  \item[{\sf RI}] Recursively Inseparable: the nuclei of the theory is $\sf RI$ (see Definition \ref{def of 1.2}).
  \item[{\sf TP}] Turing Persistent: see Definition \ref{def of 1.2}.
  \item[{\sf EHU}] Essentially Hereditarily Undecidable: any consistent RE extension of
it is hereditarily undecidable (i.e., its every sub-theory over the same
language is undecidable).
  \item[{\sf EU}] Essentially Undecidable: any consistent RE extension of the theory over the same language is undecidable.
  \item[{\sf Creative}] See Definition \ref{def of 1.3}.
  \item[$\mathbf{0}^{\prime}$] Having Turing degree $\mathbf{0}^{\prime}$.
  \item[{\sf REW}] Recursively Enumerable sets Weakly representable: any RE set is weakly representable in the theory.
  \item[{\sf RFD}] Recursive Functions Definable: Every recursive function is definable
in the theory.
  \item[{\sf RSS}] Recursive Sets Strongly representable: any recursive set is strongly representable in the theory.
  \item[{\sf RSW}] Recursive Sets Weakly representable: any recursive set is weakly representable in the theory.
\end{itemize}
\end{remark}

For the notion of {\sf Rosser} theories, see  \cite[p.221]{Smullyan}. For the notion of {\sf EI}  and  {\sf RI} theories, see \cite[p.119]{Smullyan}. For the notion of {\sf TP} theories, see \cite[p.8]{PV}. For the notion of {\sf HU} theories, see \cite[p.18]{Tarski}. For the notion of {\sf EU} theories, see \cite[p.14]{Tarski}. For the notion of {\sf Creative} theories, see \cite[p.119]{Smullyan}.
The notion of $\mathbf{0}^{\prime}$ is well known in computability theory and the notion of $\mathbf{0}^{\prime}$ theory is due to the author.
Conditions ``all RE sets are weakly representable", ``all recursive functions are definable", ``any recursive set is strongly representable" and ``any recursive set is weakly representable" appear in \cite[p.172]{Shoenfield61}. We isolated these conditions  and denoted  them  respectively by {\sf REW}, {\sf RFD}, {\sf RSS} and {\sf RSW}.

The motivation of this work is to systematically study the relationships between meta-mathematical properties of arithmetical theories in Remark \ref{convention}.  Given any two properties $P$ and $Q$ in Remark \ref{convention}, we examine whether the property $P$ implies $Q$. The structure of this paper is as follows. In Section 2, we introduce basic notions and facts that we will use throughout the paper. In Section 3, we examine the relationships between the theory $\mathbf{R}$ (see Definition \ref{def of R}) and all the properties  in Remark \ref{convention}. In Section 4, we examine the relationships between
{\sf Rosser} theories and the properties in Remark \ref{convention}. In Section 5, we examine the relationships between {\sf EI} theories as well as {\sf RI} theories, and the properties in Remark \ref{convention}. In Section 6, we examine the relationships between {\sf TP} theories as well as {\sf EHU} theories,  and the properties in Remark \ref{convention}. In Section 7, we examine the relationships between the remaining properties in Remark \ref{convention}.

\section{Basics}\label{section 2}

In this section, we examine some important properties of Robinson Arithmetic $\mathbf{Q}$ and the theory $\mathbf{R}$ that we will use throughout the paper. In Theorem \ref{relation about EI}, we examine some direct implicational relationships among properties in Remark \ref{convention}.\smallskip

\begin{definition}[Basic Notation]~\label{}
\begin{enumerate}[(1)]
 \item The function $\phi_e$ is the partial recursive function with
index $e$.
      \item  A $\Delta^0_0$ formula (\emph{bounded formula}) is built from atomic
 formulas using only negation, conjunction, disjunction, and bounded
quantifications (in the form $\exists x \leq y$ or $\forall x \leq y$). For $\Delta^0_0$ formulas, we can assume that the negation only appears before an atomic formula.
 \item A formula is  $\Sigma^0_1$ if it has the form $\exists x\phi$ where $\phi$ is $\Delta^0_0$.
\item Let $\mathfrak{N}=\langle\mathbb{N}, 0, S, +, \times\rangle$ be the standard model of arithmetic where $S, +$ and $\times$  are respectively  the successor function, the addition function  and  the multiplication function on $\mathbb{N}$.
\end{enumerate}
\end{definition}

The notion of interpretation provides us a method to compare different theories in different languages.
Let $T, S$ be consistent RE theories. Informally, we say $S$ \emph{interprets} $T$  (or $T$ is interpretable in $S$) if all sentences provable (refutable) in $T$ are mapped by a recursive translation function to sentences provable (refutable) in $S$. For the precise definition of \emph{interpretation}, we refer to \cite[pp.10-11]{on Q}.

\begin{remark}
Given two RE theories $S$ and $T$, we use $S\unlhd T$ to denote that $S$ is \emph{interpretable} in $T$ (see \cite[pp.10-11]{on Q}).
\end{remark}

Robinson Arithmetic $\mathbf{Q}$ and the theory $\mathbf{R}$ were introduced by Tarski, Mostowski and R.\ Robinson  in \cite[pp.51-53]{Tarski} which are important base theories in the study of incompleteness and undecidability, and have many nice meta-mathematical properties.\smallskip

\begin{definition}[Robinson Arithmetic $\mathbf{Q}$]~\label{def of Q}
Robinson Arithmetic $\mathbf{Q}$  is  defined in   the language $\{\mathbf{0}, \mathbf{S}, +, \times\}$ with the following axioms:
\begin{description}
  \item[$\mathbf{Q}_1$] $\forall x \forall y(\mathbf{S}x=\mathbf{S} y\rightarrow x=y)$;
  \item[$\mathbf{Q}_2$] $\forall x(\mathbf{S} x\neq \mathbf{0})$;
  \item[$\mathbf{Q}_3$] $\forall x(x\neq \mathbf{0}\rightarrow \exists y (x=\mathbf{S} y))$;
  \item[$\mathbf{Q}_4$]  $\forall x\forall y(x+ \mathbf{0}=x)$;
  \item[$\mathbf{Q}_5$] $\forall x\forall y(x+ \mathbf{S} y=\mathbf{S} (x+y))$;
  \item[$\mathbf{Q}_6$] $\forall x(x\times \mathbf{0}=\mathbf{0})$;
  \item[$\mathbf{Q}_7$] $\forall x\forall y(x\times \mathbf{S} y=x\times y +x)$.
\end{description}
\end{definition}

Given two theories $S$ and $T$, we say that $S$ is \emph{consistent with} $T$ if $S$ and $T$ has a common consistent extension over the same language.

\begin{fact}~\label{fact on Q}
\begin{enumerate}[(1)]
  \item Any theory consistent with $\mathbf{Q}$ over the same language  is undecidable (\cite[p.18]{Tarski}).
  \item The theory $\mathbf{Q}$ is minimal essentially undecidable: if an axiom of $\mathbf{Q}$ is deleted, then the remaining theory is not essentially undecidable (\cite[p.62]{Tarski}).
\end{enumerate}
\end{fact}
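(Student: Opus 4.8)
The plan is to prove the two parts from a single engine, the representability of recursive predicates in $\mathbf{Q}$, which rests on the $\Sigma^0_1$-completeness of $\mathbf{Q}$ (every $\Sigma^0_1$ sentence true in $\mathfrak{N}$ is provable in $\mathbf{Q}$). This last fact I would establish first, by induction on the construction of $\Delta^0_0$ formulas, using that $\mathbf{Q}$ already decides all closed atomic equalities between numerals and correctly handles bounded quantifiers.

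For part (1) I would proceed in two stages. Stage one: $\mathbf{Q}$ is essentially undecidable in the sense of Definition \ref{def of 1.3}. Using $\Sigma^0_1$-completeness I would build, from the $\Sigma^0_1$-definitions of a fixed disjoint pair $(A,B)$ of RE sets, a Rosser-style formula $\theta(x)$ that separates $(A,B)$ in $\mathbf{Q}$ in the sense of Definition \ref{def of 1.1}(1): $n\in A \Rightarrow \mathbf{Q}\vdash\theta(\overline n)$ and $n\in B \Rightarrow \mathbf{Q}\vdash\neg\theta(\overline n)$. Taking $(A,B)$ to be recursively inseparable (e.g. $A=\{e:\phi_e(e)=0\}$ and $B=\{e:\phi_e(e)=1\}$), any consistent extension $T\supseteq\mathbf{Q}$ inherits the separation, so $\{e:T\vdash\theta(\overline e)\}$ contains $A$ and, by consistency of $T$, is disjoint from $B$; were $T$ decidable this set would be a recursive separation of $(A,B)$, contradicting recursive inseparability. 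Stage two upgrades ``extension'' to ``consistent with'' via finite axiomatizability. Let $q$ be the conjunction $\mathbf{Q}_1\wedge\cdots\wedge\mathbf{Q}_7$. If $S$ is consistent with $\mathbf{Q}$ over the same language and $U$ is a common consistent extension, then $S^+:=\{\phi:S\vdash q\to\phi\}$ is a subtheory of $U$ (since $U\vdash q$), hence consistent, and it contains $\mathbf{Q}$; by stage one it is undecidable. But by the deduction theorem $\phi\in S^+ \iff (q\to\phi)\in S$, so decidability of $S$ would yield decidability of $S^+$, a contradiction; thus $S$ is undecidable.

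For part (2) I would, for each $i$, exhibit a model $\mathcal{M}_i$ of the six axioms $\mathbf{Q}\setminus\{\mathbf{Q}_i\}$ whose complete theory $\mathrm{Th}(\mathcal{M}_i)$ is decidable; since a decidable theory is a consistent RE extension over the same language, this witnesses that $\mathbf{Q}\setminus\{\mathbf{Q}_i\}$ is not essentially undecidable. Dropping $\mathbf{Q}_1$ or $\mathbf{Q}_2$ is easy: a two-element model with non-injective successor kills $\mathbf{Q}_1$, and a cyclic model $\mathbb{Z}/n\mathbb{Z}$ (in which $\mathbf{0}$ is a successor) kills $\mathbf{Q}_2$; both are finite, so their theories are automatically decidable. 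The remaining cases are harder, because the surviving successor axioms force the model to be infinite while decidability must still be secured. Here I would build on the decidable theory of $(\mathbb{N},\mathbf{0},\mathbf{S},+)$ (Presburger arithmetic) and adjoin a multiplication definable there; for instance, dropping $\mathbf{Q}_7$ one may take $\times\equiv\mathbf{0}$, which keeps $\mathbf{Q}_6$, violates $\mathbf{Q}_7$, and yields a definable expansion of a decidable structure.

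The main obstacle is precisely the decidability requirement in part (2) for the axioms $\mathbf{Q}_3,\mathbf{Q}_4,\mathbf{Q}_5,\mathbf{Q}_6$. Naive modifications of $\mathfrak{N}$ are barred because $\mathrm{Th}(\mathfrak{N})$ is undecidable, and the recursion axioms $\mathbf{Q}_5,\mathbf{Q}_7$ tend to force genuine addition and multiplication over the standard naturals, reintroducing undecidability. Thus the real work is to design, for each such $i$, a structure (with a possibly non-standard domain or non-standard operations, such as $\mathbb{N}$ together with an extra $\mathbf{S}$-chain for $\mathbf{Q}_3$) that satisfies the other six axioms yet has a decidable complete theory; verifying decidability in these infinite cases, rather than anything in part (1), is where the effort lies. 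By contrast, the technical core of part (1) — the $\Sigma^0_1$-completeness and the separation lemma — is routine once set up.
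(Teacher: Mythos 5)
First, a point of comparison: the paper does not prove this Fact at all --- both parts are quoted from Tarski--Mostowski--Robinson with page references --- so there is no internal proof to measure you against, only the question of whether your reconstruction actually works. Your part (1) is essentially correct and is the standard argument: $\Sigma^0_1$-completeness plus a Rosser-style witness-comparison formula separates a recursively inseparable pair of RE sets in $\mathbf{Q}$, whence every consistent extension of $\mathbf{Q}$ is undecidable; the deduction-theorem trick with the single sentence $q=\mathbf{Q}_1\wedge\cdots\wedge\mathbf{Q}_7$ then transfers undecidability from the common consistent extension down to any theory merely consistent with $\mathbf{Q}$ (this is exactly the content of Tarski's Chapter I, Theorem 6, which the paper quotes later as Theorem \ref{Thm on EHU} together with Theorem \ref{EHU CT}). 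One understatement to flag: $\Sigma^0_1$-completeness alone does not give the refutation half of the separation ($n\in B\Rightarrow\mathbf{Q}\vdash\neg\theta(\overline{n})$); as in Lemma \ref{comparision lemma} you also need $\mathbf{Q}$ to prove the order facts $\forall x(x\leq\overline{m}\rightarrow x=\overline{0}\vee\cdots\vee x=\overline{m})$ and $\forall x(x\leq\overline{m}\vee\overline{m}\leq x)$. These are provable in $\mathbf{Q}$, but that is an extra, non-trivial lemma your sketch silently assumes.

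The genuine gap is in part (2). You dispose of $\mathbf{Q}_1$ and $\mathbf{Q}_2$ with finite models and of $\mathbf{Q}_7$ with the trivial multiplication over Presburger arithmetic, but for $\mathbf{Q}_3$, $\mathbf{Q}_4$, $\mathbf{Q}_5$ and $\mathbf{Q}_6$ you only describe what a proof would have to do and explicitly defer ``the real work''. Exhibiting, for each of these four axioms, a structure satisfying the remaining six whose complete theory is decidable is the entire mathematical content of the minimality claim; it is what occupies Tarski--Mostowski--Robinson around p.~62 (for instance, for $\mathbf{Q}_3$ one adjoins an absorbing element $\infty$ with $\mathbf{S}\infty=\infty$ and suitable clauses for $+$ and $\times$, and must then actually establish decidability of the resulting complete theory, typically by quantifier elimination or by reduction to a known decidable theory). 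As written, four of the seven cases are asserted rather than proved, so the proposal does not establish part (2).
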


\begin{definition}\label{def of R}
Let $\mathbf{R}$ be the theory consisting of the following axiom schemes where $L(\mathbf{R})=\{\mathbf{0}, \mathbf{S}, +, \cdot, \leq\}$ and $x\leq y:=\exists z (z+x=y)$.
\begin{description}
  \item[\sf{Ax1}] $\overline{m}+\overline{n}=\overline{m+n}$;
  \item[\sf{Ax2}] $\overline{m}\cdot\overline{n}=\overline{m\cdot n}$;
  \item[\sf{Ax3}] $\overline{m}\neq\overline{n}$, if $m\neq n$;
  \item[\sf{Ax4}] $\forall x(x\leq \overline{n}\rightarrow x=\overline{0}\vee \cdots \vee x=\overline{n})$;
  \item[\sf{Ax5}] $\forall x(x\leq \overline{n}\vee \overline{n}\leq x)$.
\end{description}
\end{definition}

\begin{definition}\label{}
Suppose $T$ is a consistent RE theory in the language of $\mathbf{Q}$. We say $T$ is  \emph{$\Sigma^0_1$-complete} if any $\Sigma^0_1$ sentence true in the standard model $\mathfrak{N}$ is provable in $T$.
\end{definition}

In this paper, we use the following nice properties of $\mathbf{R}$:

\begin{fact}\label{key fact about R}~
\begin{enumerate}[(1)]
  \item All recursive  functions are definable in $\mathbf{R}$.
\item  The theory $\mathbf{R}$ is $\Sigma^0_1$-complete.
      \item The theory $\mathbf{R}$ is {\sf EHU}.
\end{enumerate}
\end{fact}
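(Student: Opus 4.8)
The plan is to prove the three items in the order (2), (1), (3), since $\Sigma^0_1$-completeness is the engine behind definability, and both feed into the essential hereditary undecidability. For (2), I would first prove a term-evaluation lemma: for every closed term $t$ of $L(\mathbf{R})$ with value $v$ in $\mathfrak{N}$, one has $\mathbf{R}\vdash t=\overline{v}$, by induction on $t$ using {\sf Ax1} and {\sf Ax2}. I would then show, by induction on the build-up of a $\Delta^0_0$ sentence $\theta$, that $\mathbf{R}$ \emph{decides} $\theta$ correctly, i.e.\ $\mathfrak{N}\models\theta$ implies $\mathbf{R}\vdash\theta$ and $\mathfrak{N}\models\neg\theta$ implies $\mathbf{R}\vdash\neg\theta$. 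The atomic equality case reduces to numerals and is handled by reflexivity and {\sf Ax3}; the atomic case $\overline{m}\le\overline{n}$ uses {\sf Ax1} to witness $\le$ when $m\le n$, and {\sf Ax4} to refute it when $m>n$ (instantiate {\sf Ax4} at $\overline{m}$ and kill every disjunct by {\sf Ax3}). The Boolean cases are propositional logic plus the induction hypothesis, and the bounded-quantifier cases use {\sf Ax4} to replace $\forall x\le\overline{n}$ and $\exists x\le\overline{n}$ by the finite conjunction and disjunction over $x\in\{\overline{0},\dots,\overline{n}\}$, after which the induction hypothesis applies. Finally I lift to $\Sigma^0_1$: a true $\exists x\,\theta(x)$ has a witness $w$, and $\mathbf{R}\vdash\theta(\overline{w})$ gives $\mathbf{R}\vdash\exists x\,\theta(x)$.

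For (1), let $f$ be recursive with graph $G$. Since $G$ and its complement are recursive, hence r.e., choose $\Sigma^0_1$ numerations $\exists u\,\alpha_0(\vec{x},y,u)$ of $G$ and $\exists u\,\beta_0(\vec{x},y,u)$ of $G^{c}$. I would first binumerate $G$ by the Rosser-style formula $\rho(\vec{x},y):=\exists u\,(\alpha_0(\vec{x},y,u)\wedge\forall v\le u\,\neg\beta_0(\vec{x},y,v))$: using (2), $\mathbf{R}\vdash\rho(\overline{\vec{a}},\overline{b})$ whenever $f(\vec{a})=b$, and $\mathbf{R}\vdash\neg\rho(\overline{\vec{a}},\overline{b})$ otherwise, the latter by comparing the witness of $\rho$ with that of the symmetric formula for $G^{c}$ through the linearity axiom {\sf Ax5}. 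I would then set $\phi(\vec{x},y):=\rho(\vec{x},y)\wedge(\forall z<y)\,\neg\rho(\vec{x},z)$ and verify $\mathbf{R}\vdash\forall y\,[\phi(\overline{\vec{a}},y)\leftrightarrow y=\overline{b}]$: the direction $\leftarrow$ follows from binumeration and {\sf Ax4}, while for $\to$ one splits on {\sf Ax5} ($y\le\overline{b}$ or $\overline{b}\le y$), using {\sf Ax4} to enumerate the numerals below $\overline{b}$ in the first case and the minimality clause together with $\mathbf{R}\vdash\rho(\overline{\vec{a}},\overline{b})$ in the second. The main obstacle here is exactly this uniqueness statement: because $\mathbf{R}$ has no induction, functionality and totality of $f$ cannot be proved outright, and one must extract them for each fixed instance from the weak order axioms {\sf Ax4} and {\sf Ax5}.

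For (3), let $S$ be any consistent RE extension of $\mathbf{R}$ and $U\subseteq S$ any sub-theory over $L(\mathbf{R})$. The structural observation is that $\mathrm{Thm}(U)$ always \emph{separates} the pair $(\mathrm{Val},\mathrm{Ref}(S))$, where $\mathrm{Val}$ is the set of logical validities of $L(\mathbf{R})$ and $\mathrm{Ref}(S)=\{\psi:S\vdash\neg\psi\}$: indeed $\mathrm{Val}\subseteq\mathrm{Thm}(U)$ because every theory proves the validities, and $\mathrm{Thm}(U)\cap\mathrm{Ref}(S)=\emptyset$ because $U\subseteq S$ and $S$ is consistent. Hence, if I can show that $(\mathrm{Val},\mathrm{Ref}(S))$ is recursively inseparable, then $\mathrm{Thm}(U)$ cannot be recursive, i.e.\ $U$ is undecidable, uniformly in $U$; this yields {\sf HU} of $S$ and therefore {\sf EHU} of $\mathbf{R}$. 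To prove the recursive inseparability I would fix an effectively inseparable pair $(A,B)$ of r.e.\ sets and reduce it into $(\mathrm{Val},\mathrm{Ref}(S))$ by a recursive map $n\mapsto\sigma_n$ with $\sigma_n\in\mathrm{Val}$ for $n\in A$ and $S\vdash\neg\sigma_n$ for $n\in B$; the negative side comes from $\mathbf{R}\subseteq S$ and a separating formula supplied by (1)--(2), while the positive side is produced by passing, via the deduction theorem, from $\mathbf{R}$-provability of the relevant instance to logical validity of the implication $\bigwedge\Delta\to(\cdots)$ for the finite set $\Delta$ of $\mathbf{R}$-axioms actually used. This last move---turning $\mathbf{R}$-provability into plain validity so that it survives passage to an arbitrary weak sub-theory---is where the schematic, locally finite nature of $\mathbf{R}$ is indispensable, and making the reduction total and uniform is the genuinely delicate point of the whole argument.
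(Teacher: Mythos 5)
For items (1) and (2) your arguments are correct and follow the standard route; the paper itself gives almost no detail here (it cites Tarski--Mostowski--Robinson for (1) and disposes of (2) with two auxiliary facts about variable-free terms and quantifier-free sentences), so your versions are if anything more careful --- in particular you correctly handle the bounded-quantifier cases of $\Delta^0_0$ sentences via {\sf Ax4}, which the paper's sketch glosses over, and your treatment of (1) via a Rosser-style binumeration of the graph followed by a least-witness clause, with uniqueness extracted from {\sf Ax4} and {\sf Ax5}, is exactly the Tarski--Mostowski--Robinson proof being cited.

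For item (3), however, there is a genuine gap at precisely the point you flag as ``delicate.'' Your frame (a subtheory $U$ of a consistent extension $S$ always separates $(\mathrm{Val},\mathrm{Ref}(S))$, so it suffices to make that pair recursively inseparable) is the right one --- it is Vaught's argument, and the paper simply cites Cobham and Vaught for this item rather than proving it. But to pull a recursive separator of $(\mathrm{Val},\mathrm{Ref}(S))$ back to a recursive separator of $(A,B)$, the map $n\mapsto\sigma_n$ must be \emph{total} recursive, and your proposed construction does not deliver that. ``The finite set $\Delta$ of $\mathbf{R}$-axioms actually used'' to prove $\psi(\overline{n})$ can only be identified by an unbounded proof search that terminates exactly when $n\in A$ (and the analogous search for $\neg\psi(\overline{n})$ terminates only when $n\in B$), so the resulting function is partial, undefined off $A\cup B$; and a partial reduction only yields disjoint \emph{r.e.} supersets of $A$ and $B$, which is no contradiction with mere recursive inseparability. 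Nor can you precompute $\Delta$ from $n$: the axioms needed grow with the least existential witness $m$ for $\psi(\overline{n})$, and $m$ is not recursively bounded in $n$ (otherwise $A$ would be recursive). Closing this --- producing a total recursive $n\mapsto\sigma_n$ with $\sigma_n$ logically valid for $n\in A$ and $\mathbf{R}$-refutable for $n\in B$ --- is the actual content of the Cobham--Vaught theorem, and it requires an idea (Vaught's uniformization trick) that is absent from your sketch; as written, part (3) does not go through.
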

\begin{proof}\label{}
\begin{enumerate}[(1)]
  \item  This is proved by Tarski, Mostowski and R.\ Robinson  in \cite[p.56]{Tarski}.
  \item It is easy to prove the following two facts:
\begin{enumerate}[(A)]
  \item For any variable-free term $t$, there is a unique natural
number $n$ such that $\mathbf{R}\vdash t =\mathbf{S}^{n}\mathbf{0}$.
\item If $\tau$ is a quantifier-free sentence  true in $\mathfrak{N}$, then $\mathbf{R}\vdash \tau$.
\end{enumerate}
From these two facts, it is easy to show that  any $\Sigma^0_1$ sentence  true in $\mathfrak{N}$ is provable in $\mathbf{R}$.
\item Alan Cobham proved in 1957 that $\mathbf{R}$ is essentially hereditarily
undecidable. In 1962, Robert Vaught provided a new proof. See Visser's \cite[pp.2-5]{on Q}.
\end{enumerate}
\end{proof}

\begin{definition}\label{}
Suppose $\sigma(x)=\exists y\, \phi_0(x,y)$ and $\sigma^{\prime}(x)=\exists y\, \phi_1(x,y)$ are two $\Sigma^0_1$-formulas with only one free variable where $\phi_0(x,y)$ and $\phi_1(x,y)$ are  $\Delta^0_0$-formulas.
Define
\[\sigma(x)<\sigma^{\prime}(x):= \exists y(\phi_0(x,y)\wedge \forall z\leq y \neg\phi_1(x,z))\]
and \[\sigma^{\prime}(x)\leq \sigma(x):= \exists y (\phi_1(x,y)\wedge \forall z<y \neg\phi_0(x,z)).\]
\end{definition}
\smallskip

\begin{lemma}[\cite{CS}, Lemma 6.27, p.347]~\label{comparision lemma}
Suppose $\sigma(x)=\exists y\, \phi_0(x,y)$ and $\sigma^{\prime}(x)=\exists y\, \phi_1(x,y)$ are two $\Sigma^0_1$-formulas with only one free variable. If $\mathfrak{N}\models \sigma^{\prime}(\overline{n})\leq \sigma(\overline{n})$ for some $n\in\mathbb{N}$, then $\mathbf{R}\vdash \neg(\sigma(\overline{n})< \sigma^{\prime}(\overline{n}))$.
\end{lemma}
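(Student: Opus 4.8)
The plan is to unwind the definitions and establish the classically equivalent sentence directly inside $\mathbf{R}$. Writing out the negation, $\neg(\sigma(\overline{n})<\sigma^{\prime}(\overline{n}))$ is logically equivalent to
\[\forall y\big(\phi_0(\overline{n},y)\rightarrow \exists z\leq y\,\phi_1(\overline{n},z)\big),\]
so it suffices to derive this sentence in $\mathbf{R}$. The hypothesis $\mathfrak{N}\models\sigma^{\prime}(\overline{n})\leq\sigma(\overline{n})$ unpacks to the existence of a concrete natural number $b$ with $\mathfrak{N}\models\phi_1(\overline{n},\overline{b})$ and $\mathfrak{N}\models\neg\phi_0(\overline{n},\overline{k})$ for every $k<b$. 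Since these are true $\Delta^0_0$ sentences and $\mathbf{R}$ is $\Sigma^0_1$-complete (Fact \ref{key fact about R}(2), noting that a true $\Delta^0_0$ sentence $\psi$ is equivalent to the true $\Sigma^0_1$ sentence $\exists x\,\psi$), this yields $\mathbf{R}\vdash\phi_1(\overline{n},\overline{b})$ and $\mathbf{R}\vdash\neg\phi_0(\overline{n},\overline{k})$ for each $k<b$.

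Next I would reason inside $\mathbf{R}$, fixing an arbitrary $y$ with $\phi_0(\overline{n},y)$ and producing a witness $z\leq y$ for $\phi_1$. The natural candidate is the standard witness $\overline{b}$, and the argument is driven by the linearity and bounding axioms of $\mathbf{R}$. By Ax5 we have $\mathbf{R}\vdash y\leq\overline{b}\vee\overline{b}\leq y$, so I split into two cases. If $\overline{b}\leq y$, then $z:=\overline{b}$ works at once, using $\mathbf{R}\vdash\phi_1(\overline{n},\overline{b})$. If instead $y\leq\overline{b}$, then Ax4 gives $\mathbf{R}\vdash y=\overline{0}\vee\cdots\vee y=\overline{b}$, and I proceed by cases on this finite disjunction.

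The finite case analysis is where the minimality encoded in the hypothesis is used. For a disjunct $y=\overline{k}$ with $k<b$, the assumption $\phi_0(\overline{n},y)$ becomes $\phi_0(\overline{n},\overline{k})$, which together with the already-derived $\mathbf{R}\vdash\neg\phi_0(\overline{n},\overline{k})$ is contradictory, so the conclusion follows a fortiori. For the remaining disjunct $y=\overline{b}$, the witness $z:=\overline{b}$ works again, using $\mathbf{R}\vdash\overline{b}\leq\overline{b}$ (which follows from $\mathbf{R}\vdash\overline{0}+\overline{b}=\overline{b}$ via Ax1 and the definition of $\leq$) together with $\mathbf{R}\vdash\phi_1(\overline{n},\overline{b})$. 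Collecting all cases yields $\phi_0(\overline{n},y)\rightarrow\exists z\leq y\,\phi_1(\overline{n},z)$ for the fixed but arbitrary $y$, and universal generalization gives the required sentence, hence $\mathbf{R}\vdash\neg(\sigma(\overline{n})<\sigma^{\prime}(\overline{n}))$.

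I expect the only genuinely delicate point to be the treatment of a possibly nonstandard $y$: one cannot compare $y$ with $\overline{b}$ numerically, and the argument hinges precisely on using Ax5 to linearly order $y$ and $\overline{b}$, and Ax4 to collapse every $y\leq\overline{b}$ to one of the finitely many numerals $\overline{0},\dots,\overline{b}$, where $\Sigma^0_1$-completeness can finally be brought to bear. Everything else — the numeral arithmetic needed for $\mathbf{R}\vdash\overline{b}\leq\overline{b}$ and the passage from a true $\Delta^0_0$ sentence to its $\Sigma^0_1$ form — is routine.
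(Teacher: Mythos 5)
Your proposal is correct and follows essentially the same route as the paper's proof: unwind the negation to $\forall y(\phi_0(\overline{n},y)\rightarrow\exists z\leq y\,\phi_1(\overline{n},z))$, pull the standard witness $b$ and the finitely many facts $\neg\phi_0(\overline{n},\overline{k})$ ($k<b$) into $\mathbf{R}$ by $\Sigma^0_1$-completeness, then dispose of an arbitrary $y$ by the dichotomy $\overline{b}\leq y$ versus $y\leq\overline{b}$ together with the finite case analysis from {\sf Ax4}. The only cosmetic difference is that you split on $\leq$ via {\sf Ax5} literally (handling the overlap case $y=\overline{b}$ explicitly), whereas the paper uses the variant $\overline{m}\leq y\vee y<\overline{m}$; the substance is identical.
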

\begin{proof}\label{}
Suppose $\mathfrak{N}\models \sigma^{\prime}(\overline{n})\leq \sigma(\overline{n})$ for some $n\in\mathbb{N}$. For some $m\in\mathbb{N}$, we have $\mathfrak{N}\models \phi_1(\overline{n},\overline{m})\wedge \forall z<\overline{m} \neg\phi_0(\overline{n},z)$. By $\Sigma^0_1$-completeness of $\mathbf{R}$, we have $\mathbf{R}\vdash \phi_1(\overline{n},\overline{m})$ and $\mathbf{R}\vdash \neg\phi_0(\overline{n},\overline{i})$ for any $i<m$. Note that $\neg(\sigma(\overline{n})< \sigma^{\prime}(\overline{n}))\Leftrightarrow \forall y  (\phi_0(\overline{n},y)\rightarrow \exists z\leq y \phi_1(\overline{n},z))$. Since $\mathbf{R}\vdash \phi_1(\overline{n},\overline{m})$, we have $\mathbf{R}\vdash \overline{m}\leq y\rightarrow \exists z\leq y \phi_1(\overline{n},z)$. Since $\mathbf{R}\vdash y<\overline{m}\rightarrow y=\overline{0}\vee \cdots \vee y=\overline{m-1}$, we have $\mathbf{R}\vdash y<\overline{m}\rightarrow \neg\phi_0(\overline{n},y)$. Since $\mathbf{R}\vdash \overline{m}\leq y \vee y<\overline{m}$, we have $\mathbf{R}\vdash\neg(\sigma(\overline{n})< \sigma^{\prime}(\overline{n}))$.
\end{proof}

\begin{theorem}\label{R is Rosser}
Any disjoint pair of RE sets is separable in $\mathbf{R}$. Thus,  $\mathbf{R}$ is {\sf Rosser}.
\end{theorem}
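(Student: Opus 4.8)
The plan is to carry out the classical Rosser construction, using the $\Sigma^0_1$-completeness of $\mathbf{R}$ (Fact \ref{key fact about R}(2)) for one half of the separation and the comparison Lemma \ref{comparision lemma} for the other half. First I would fix an arbitrary disjoint pair $(A,B)$ of RE sets and choose $\Delta^0_0$-formulas $\phi_0(x,y)$ and $\phi_1(x,y)$ so that $A=\{n : \mathfrak{N}\models\exists y\,\phi_0(\overline{n},y)\}$ and $B=\{n : \mathfrak{N}\models\exists y\,\phi_1(\overline{n},y)\}$. Setting $\sigma(x)=\exists y\,\phi_0(x,y)$ and $\sigma^{\prime}(x)=\exists y\,\phi_1(x,y)$, the separating formula is the Rosser formula $\psi(x):=(\sigma(x)<\sigma^{\prime}(x))$, that is, $\exists y(\phi_0(x,y)\wedge\forall z\leq y\,\neg\phi_1(x,z))$. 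I must then verify the two required implications for $\psi$.

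Second, I would handle the case $n\in A$. Since $A$ and $B$ are disjoint, $n\notin B$, so $\mathfrak{N}\models\neg\phi_1(\overline{n},k)$ for every $k$. Picking a witness $m$ with $\mathfrak{N}\models\phi_0(\overline{n},\overline{m})$, I obtain $\mathfrak{N}\models\phi_0(\overline{n},\overline{m})\wedge\forall z\leq\overline{m}\,\neg\phi_1(\overline{n},z)$, hence $\mathfrak{N}\models\psi(\overline{n})$. Because $\psi(\overline{n})$ is a $\Sigma^0_1$ sentence true in the standard model, the $\Sigma^0_1$-completeness of $\mathbf{R}$ immediately yields $\mathbf{R}\vdash\psi(\overline{n})$.

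Third, for the case $n\in B$, disjointness gives $n\notin A$, so $\mathfrak{N}\models\neg\phi_0(\overline{n},k)$ for every $k$. Choosing $m$ with $\mathfrak{N}\models\phi_1(\overline{n},\overline{m})$, I get $\mathfrak{N}\models\phi_1(\overline{n},\overline{m})\wedge\forall z<\overline{m}\,\neg\phi_0(\overline{n},z)$, which is exactly the statement $\mathfrak{N}\models\sigma^{\prime}(\overline{n})\leq\sigma(\overline{n})$. Lemma \ref{comparision lemma} then delivers $\mathbf{R}\vdash\neg(\sigma(\overline{n})<\sigma^{\prime}(\overline{n}))$, i.e.\ $\mathbf{R}\vdash\neg\psi(\overline{n})$. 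Together with the previous step this shows $(A,B)$ is separable in $\mathbf{R}$, and since $(A,B)$ was arbitrary, $\mathbf{R}$ is {\sf Rosser}.

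The one genuinely delicate point has already been packaged into Lemma \ref{comparision lemma}: the asymmetry between the strict comparison $<$ (inner bound $\leq y$) and the non-strict comparison $\leq$ (inner bound $<y$) is precisely what makes the bounded quantifier blocks line up so that a true instance of $\sigma^{\prime}\leq\sigma$ provably refutes $\sigma<\sigma^{\prime}$ inside $\mathbf{R}$. Given that lemma, I expect no real obstacle; the main thing to watch is the bookkeeping — assigning the defining formulas to $A$ and $B$ in the order matching the asymmetric definitions, so that membership in $A$ drives the $\Sigma^0_1$-completeness direction while membership in $B$ drives the comparison-lemma direction.
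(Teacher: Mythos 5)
Your proposal is correct and follows essentially the same route as the paper: the same Rosser formula $\psi(x):=(\sigma(x)<\sigma^{\prime}(x))$, with $\Sigma^0_1$-completeness handling the $n\in A$ case and Lemma \ref{comparision lemma} handling the $n\in B$ case. The only difference is that you spell out the witness argument showing $\mathfrak{N}\models\sigma(\overline{n})<\sigma^{\prime}(\overline{n})$ (resp.\ $\mathfrak{N}\models\sigma^{\prime}(\overline{n})\leq\sigma(\overline{n})$), which the paper simply asserts.
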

\begin{proof}\label{}
Suppose $(A,B)$ is a disjoint pair of RE sets and $A,B$ are respectively definable in the standard model $\mathfrak{N}$ by the $\Sigma^0_1$ formulas $\sigma(x)$ and $\sigma^{\prime}(x)$. Suppose
$n\in A$. Then $\sigma(\overline{n})< \sigma^{\prime}(\overline{n})$ is a $\Sigma^0_1$ sentence true in $\mathfrak{N}$. By the  $\Sigma^0_1$-completeness of $\mathbf{R}$, we have $\mathbf{R}\vdash \sigma(\overline{n})< \sigma^{\prime}(\overline{n})$. Suppose $n\in B$. Then $\sigma^{\prime}(\overline{n})\leq \sigma(\overline{n})$ is true in $\mathfrak{N}$. By Lemma \ref{comparision lemma}, we have $\mathbf{R}\vdash \neg(\sigma(\overline{n})< \sigma^{\prime}(\overline{n}))$. Define $\psi(x):=\sigma(x)< \sigma^{\prime}(x)$. Then we have $n\in A\Rightarrow \mathbf{R}\vdash \psi(\overline{n})$ and $n\in B\Rightarrow \mathbf{R}\vdash \neg\psi(\overline{n})$. Thus, $\mathbf{R}$ is {\sf Rosser}.
\end{proof}
\smallskip

\begin{definition}[\cite{RM}, p.70]~\label{}
Let $(A,B)$ and $(C,D)$ be disjoint pairs of RE sets. We say $(A,B)$
is \emph{semi-reducible} to $(C,D)$ if there is a recursive function $f(x)$ such that if $x\in A$, then $f(x)\in C$, and if $x\in B$, then $f(x)\in D$.
\end{definition}
\smallskip

\begin{theorem}[pp.70-126 in \cite{Smullyan},  Theorem 2.10 in \cite{Cheng22}]~\label{EI thm}
For any consistent RE theory $T$, $T$ is {\sf EI} iff any disjoint pair $(A,B)$ of RE sets is semi-reducible to $(T_P, T_R)$.
\end{theorem}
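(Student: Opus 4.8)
The plan is to prove the two implications separately, with the recursion theorem doing the real work in the harder direction. Throughout, write $g$ for an effective inseparability function of the relevant pair, and recall that $(T_P,T_R)$ is a disjoint pair of RE sets (disjointness being exactly the consistency of $T$).

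For the direction ($\Leftarrow$) I would first invoke the classical existence of \emph{some} effectively inseparable pair $(A_0,B_0)$ of RE sets, which is produced by a standard recursion-theoretic construction. By hypothesis $(A_0,B_0)$ is semi-reducible to $(T_P,T_R)$ via a recursive $f$. I then prove a short lemma: {\sf EI} is preserved upward along semi-reductions. Concretely, suppose $(A_0,B_0)$ is {\sf EI} with function $g_0$ and semi-reduces to a disjoint RE pair $(C,D)$ via $f$. Given indices $i,j$ with $C\subseteq W_i$, $D\subseteq W_j$ and $W_i\cap W_j=\emptyset$, form the RE preimages $W_{i'}=f^{-1}(W_i)$ and $W_{j'}=f^{-1}(W_j)$, with $i'=s(i)$, $j'=s(j)$ recursive by the s-m-n theorem. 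The semi-reduction gives $A_0\subseteq W_{i'}$ and $B_0\subseteq W_{j'}$, while disjointness of $W_i,W_j$ forces $W_{i'}\cap W_{j'}=\emptyset$; hence $g_0(i',j')\notin W_{i'}\cup W_{j'}$, and therefore $f(g_0(i',j'))\notin W_i\cup W_j$. Thus $(i,j)\mapsto f(g_0(s(i),s(j)))$ is an inseparability function for $(C,D)=(T_P,T_R)$, so $(T_P,T_R)$ is {\sf EI}.

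The substantive direction is ($\Rightarrow$). Assume $(T_P,T_R)$ is {\sf EI} with function $g$, and let $(A,B)$ be an arbitrary disjoint pair of RE sets; I must build a recursive $f$ with $f(A)\subseteq T_P$ and $f(B)\subseteq T_R$. The construction is self-referential. Uniformly in an index $e$ and an input $x$, define the RE sets
\[
W_{i(e,x)}=T_P\cup\{\phi_e(x)\mid x\in B\},\qquad W_{j(e,x)}=T_R\cup\{\phi_e(x)\mid x\in A\},
\]
where the guards mean: enumerate $T_P$ (resp.\ $T_R$) and adjoin the value $\phi_e(x)$ if and when $x$ turns up in $B$ (resp.\ $A$). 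By s-m-n the indices $i(e,x),j(e,x)$ are recursive, and we set $\phi_{S(e)}(x)=g(i(e,x),j(e,x))$. By the recursion theorem fix $e^{*}$ with $\phi_{e^{*}}=\phi_{S(e^{*})}$ and put $f=\phi_{e^{*}}$; this $f$ is total because $g$ and the index functions are total. Write $\alpha=f(x)=g(i(e^{*},x),j(e^{*},x))$ and use disjointness of $A,B$. If $x\in A$ then $W_{i(e^{*},x)}=T_P$ and $W_{j(e^{*},x)}=T_R\cup\{\alpha\}$; assuming $\alpha\notin T_P$ would make these two sets disjoint with $T_P\subseteq W_{i(e^{*},x)}$ and $T_R\subseteq W_{j(e^{*},x)}$, so {\sf EI} yields $\alpha=g(i(e^{*},x),j(e^{*},x))\notin W_{i(e^{*},x)}\cup W_{j(e^{*},x)}$, contradicting $\alpha\in W_{j(e^{*},x)}$; hence $\alpha\in T_P$. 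Symmetrically $x\in B$ forces $\alpha\in T_R$. Thus $f$ is the desired semi-reduction.

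The main obstacle is getting the self-reference in the ($\Rightarrow$) direction exactly right. The auxiliary sets $W_{i(e,x)},W_{j(e,x)}$ refer to the very value $\phi_e(x)$ that the recursion theorem later pins to $g(i(e,x),j(e,x))$, so one must check this circularity is benign (the value always converges, since $g$ and the s-m-n indices are total) and that all the index manipulations are uniform enough for a single application of the recursion theorem. The delicate point is that the disjointness needed to \emph{apply} {\sf EI} is precisely what one obtains by assuming, toward a contradiction, that $\alpha$ lies outside the target nucleus; verifying that this contradiction forces $\alpha$ into $T_P$ (resp.\ $T_R$) is the heart of the argument. The ($\Leftarrow$) direction, by contrast, is routine once a seed {\sf EI} pair is available.
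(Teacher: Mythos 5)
Your proof is correct. The paper itself gives no proof of this theorem --- it is stated with citations to Smullyan and to \cite{Cheng22} --- and your argument is essentially the standard one from those sources: the forward direction via a single application of the recursion theorem to the self-referential pair $W_{i(e,x)},W_{j(e,x)}$ (with the case analysis on $x\in A$ versus $x\in B$ exactly as you give it), and the converse by pushing a seed effectively inseparable pair through the hypothesized semi-reduction, using s-m-n to pass to preimage indices. Both halves check out, including the delicate point that disjointness of the auxiliary sets is obtained by assuming $\alpha$ lies outside the target nucleus.
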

\smallskip

\begin{fact}[\cite{Rogers87}, p.183, p.94]~\label{fact on creative}
\begin{enumerate}[(1)]
  \item Let $T$ be a consistent RE theory. The theory $T$ is {\sf Creative} iff any RE set is reducible to $T_P$: for any RE set $X$, there exists a recursive function $f$ such that $n\in X \Leftrightarrow f(n)\in T_P$.
  \item For any disjoint pair $(A,B)$ of RE sets, if $(A,B)$ is ${\sf EI}$, then both $A$ and $B$ are creative.
\end{enumerate}
\end{fact}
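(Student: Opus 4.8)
The plan is to read Fact~\ref{fact on creative} as two classical statements about creative and productive sets and to treat each part separately. Throughout I use that $T_P$ is RE (because $T$ is an RE theory), so in part~(1) the only issue is the productiveness of $\overline{T_P}$; I also use the halting set $K=\{e:e\in W_e\}$ together with the standard observation that $\overline{K}$ is productive with the identity as productive function: if $W_i\subseteq\overline{K}$ then $i\notin W_i$ (otherwise $i\in W_i$ forces $i\in K$, contradicting $W_i\subseteq\overline{K}$), so $i\in\overline{K}-W_i$. Here and below, ``reducible'' means many-one reducible ($\leq_m$), which is exactly the relation ``there is a recursive $f$ with $n\in X\Leftrightarrow f(n)\in T_P$'' appearing in the statement.

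For the easy direction of (1), suppose every RE set is reducible to $T_P$. Since $K$ is RE, there is a recursive $f$ with $n\in K\Leftrightarrow f(n)\in T_P$, equivalently $n\in\overline{K}\Leftrightarrow f(n)\in\overline{T_P}$, so $\overline{K}\leq_m\overline{T_P}$ via $f$. I would then invoke the lemma that productiveness is preserved upward under $\leq_m$: if $P\leq_m Q$ via $g$ and $P$ is productive with function $p$, then $Q$ is productive. The proof of this lemma is the routine s-m-n computation: whenever $W_j\subseteq Q$, the set $g^{-1}(W_j)$ is RE uniformly in $j$, say $g^{-1}(W_j)=W_{h(j)}$, and $g^{-1}(W_j)\subseteq P$ (since $x\in g^{-1}(W_j)$ gives $g(x)\in Q$, hence $x\in P$); then $j\mapsto g(p(h(j)))$ is a productive function for $Q$. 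Applying this with $P=\overline{K}$ and $Q=\overline{T_P}$ shows $\overline{T_P}$ is productive, and with $T_P$ being RE this makes $T_P$ creative.

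The forward direction of (1) is the main obstacle: it is Myhill's theorem that every creative set is $m$-complete, and it is the only step that genuinely needs the recursion theorem. Assume $T_P$ is creative with productive function $p$ for $\overline{T_P}$, and let $X$ be any RE set. Using s-m-n I would build a recursive $d(n,x)$ with $W_{d(n,x)}=\{p(x)\}$ if $n\in X$ and $W_{d(n,x)}=\emptyset$ if $n\notin X$ (enumerate $p(x)$ precisely when $n$ appears in $X$). The recursion theorem with parameters then supplies a recursive $e(n)$ with $W_{e(n)}=W_{d(n,e(n))}$. Setting $f(n)=p(e(n))$, a short case analysis against productiveness gives $n\in X\Leftrightarrow f(n)\in T_P$: if $n\notin X$ then $W_{e(n)}=\emptyset\subseteq\overline{T_P}$, so $p(e(n))\in\overline{T_P}$; if $n\in X$ then $W_{e(n)}=\{p(e(n))\}$, and $p(e(n))\in\overline{T_P}$ would force $p(e(n))\in\overline{T_P}-W_{e(n)}$, contradicting $p(e(n))\in W_{e(n)}$. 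This is the delicate point, since it hinges on feeding the productive function's own output back through the recursion theorem.

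For part~(2), fix the EI function $f(x,y)$ for $(A,B)$ and an index $a$ with $W_a=A$. To see that $A$ is creative I would show $\overline{A}$ is productive. Given $i$ with $W_i\subseteq\overline{A}$, use s-m-n to compute $h(i)$ with $W_{h(i)}=B\cup W_i$. Then $A\cap W_{h(i)}=\emptyset$, because $A\cap B=\emptyset$ ($(A,B)$ is a disjoint pair) and $A\cap W_i=\emptyset$ (from $W_i\subseteq\overline{A}$); hence $W_a\cap W_{h(i)}=\emptyset$ with $A\subseteq W_a$ and $B\subseteq W_{h(i)}$, so the defining property of $f$ yields $f(a,h(i))\notin W_a\cup W_{h(i)}=A\cup B\cup W_i$. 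In particular $f(a,h(i))\in\overline{A}-W_i$, so $i\mapsto f(a,h(i))$ is a productive function for $\overline{A}$; since $A$ is RE, $A$ is creative. The identical argument with the roles of $A$ and $B$ and the two arguments of $f$ swapped shows $B$ is creative.
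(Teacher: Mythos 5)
Your proposal is correct, and it supplies complete proofs of both parts. Note that the paper itself gives no proof of this Fact --- it is quoted from Rogers with a page citation --- and your arguments (the upward preservation of productiveness under $\leq_m$ plus Myhill's recursion-theorem construction for part (1), and the $W_{h(i)}=B\cup W_i$ trick against the {\sf EI} function for part (2)) are exactly the standard ones that the citation points to, so there is nothing to fault and no genuine divergence to report.
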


\begin{theorem}\label{relation about EI}
Let $T$ be a consistent RE theory.
\begin{enumerate}[(1)]
\item If  $T$ is {\sf Rosser}, then $T$ is ${\sf EI}$.
  \item If  $T$ is ${\sf EI}$, then $T$ is ${\sf RI}$.
  \item If $T$ is ${\sf RI}$, then $T$ is ${\sf EU}$.
  \item If $T$ is ${\sf EI}$, then $T$ is {\sf Creative}.
  \item If $T$ is {\sf Creative}, then $T$ is $\mathbf{0}^{\prime}$.
\end{enumerate}
\end{theorem}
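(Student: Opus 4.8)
The plan is to dispatch the five implications one at a time, leaning on Theorem \ref{EI thm} and Fact \ref{fact on creative} so that each argument stays short. For (1), I would reduce to the semi-reducibility criterion of Theorem \ref{EI thm}: it suffices to show that every disjoint pair $(A,B)$ of RE sets is semi-reducible to $(T_P,T_R)$. Since $T$ is {\sf Rosser}, fix a formula $\phi(x)$ separating $(A,B)$ in $T$ and set $f(n)=\ulcorner\phi(\overline{n})\urcorner$. This $f$ is recursive because $\phi$ is a fixed formula and numeral substitution followed by G\"odel numbering is primitive recursive. If $n\in A$ then $T\vdash\phi(\overline{n})$, so $f(n)\in T_P$; if $n\in B$ then $T\vdash\neg\phi(\overline{n})$, so $f(n)\in T_R$. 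Thus $(A,B)$ semi-reduces to $(T_P,T_R)$, and as $(A,B)$ was arbitrary, $T$ is ${\sf EI}$.

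For (2) and (3) I would run the two standard separation arguments by contradiction. For (2), suppose $(T_P,T_R)$ were not ${\sf RI}$, witnessed by a recursive set $X$ with $T_P\subseteq X$ and $X\cap T_R=\emptyset$. Then $X=W_i$ and $\overline{X}=W_j$ for suitable indices $i,j$ (both $X$ and its complement are RE), with $T_P\subseteq W_i$, $T_R\subseteq W_j$, and $W_i\cap W_j=\emptyset$; the ${\sf EI}$ function would then give $f(i,j)\notin W_i\cup W_j=\mathbb{N}$, a contradiction. For (3), let $S$ be a consistent RE extension of $T$ in the same language and suppose $S$ were decidable, so $S_P$ is recursive. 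Since $T_P\subseteq S_P$, and since consistency of $S$ together with $T\subseteq S$ gives $S_P\cap T_R=\emptyset$ (a common member would make $S$ prove both a sentence and its negation), the recursive set $S_P$ would separate $(T_P,T_R)$, contradicting ${\sf RI}$; hence every such $S$ is undecidable, i.e., $T$ is ${\sf EU}$.

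For (4) I would simply invoke Fact \ref{fact on creative}(2): the hypothesis that $T$ is ${\sf EI}$ says exactly that $(T_P,T_R)$ is ${\sf EI}$, so $T_P$ is creative, which is the definition of $T$ being {\sf Creative}. For (5) I would pin down the Turing degree of $T_P$ from both sides. The upper bound is free: $T_P$ is RE, hence $T_P\leq_T\mathbf{0}^{\prime}$. For the lower bound, Fact \ref{fact on creative}(1) converts creativity into the statement that every RE set is (many-one, hence Turing) reducible to $T_P$; applying this to the halting set $K$ yields $\mathbf{0}^{\prime}=\deg(K)\leq\deg(T_P)$. Combining the two bounds gives $\deg(T_P)=\mathbf{0}^{\prime}$, i.e., $T$ is $\mathbf{0}^{\prime}$.

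None of the five steps is genuinely hard once the two cited results are in hand, but the step demanding the most care is (5): one must argue both the upper bound (from RE-ness) and the lower bound (completeness via creativity) in order to nail the degree to exactly $\mathbf{0}^{\prime}$, rather than merely showing $T_P$ is complete. A secondary point to keep honest is the bookkeeping in (1), namely that the single separating formula $\phi$ furnished by {\sf Rosser}-ness really does yield a \emph{recursive} semi-reduction $n\mapsto\ulcorner\phi(\overline{n})\urcorner$.
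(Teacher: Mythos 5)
Your proposal is correct and follows essentially the same route as the paper: (1) via the semi-reducibility criterion of Theorem \ref{EI thm}, (2) and (3) by the same contradiction arguments with a recursive separating set, and (4)--(5) via Fact \ref{fact on creative}. You merely supply details the paper leaves implicit (the explicit recursive map $n\mapsto\ulcorner\phi(\overline{n})\urcorner$ in (1), and the two-sided degree bound in (5)), which is fine.
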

\begin{proof}\label{}
\begin{enumerate}[(1)]
  \item If  $T$ is {\sf Rosser}, any disjoint pair of RE sets is semi-reducible to $(T_P, T_R)$. Thus, by Theorem \ref{EI thm}, $T$ is ${\sf EI}$.
  \item Suppose  $T$ is ${\sf EI}$, but it  is not ${\sf RI}$. Then there is a recursive set $S$ such that $T_P\subseteq S$ and $S\cap T_R=\emptyset$. Suppose $(T_P, T_R)$ is ${\sf EI}$ via the recursive function $f$, and $S=W_i$ and $\overline{S}$, the complement of $S$, is $W_j$. Since $T_P\subseteq W_i$ and $T_R\subseteq W_j$, we have $f(i,j)\notin W_i\cup W_j=\mathbb{N}$, which is a contradiction.
  \item Suppose  $T$ is ${\sf RI}$, but it  is not ${\sf EU}$. Let $S$ be a consistent RE extension of $T$ such that $S$ is recursive. Then $S_P$ is a recursive set separating $T_P$ and $T_R$ (i.e., $T_P\subseteq S_P$ and $S_P\cap T_R=\emptyset$), which contradicts the assumption that $T$ is ${\sf RI}$.
  \item  Follows from Fact \ref{fact on creative}(2).
  \item   Follows from Fact \ref{fact on creative}(1).
\end{enumerate}
\end{proof}

\section{Relationships with interpreting the theory $\mathbf{R}$}

In this section, we discuss the relationships between the theory $\mathbf{R}$ and the properties in Remark \ref{convention}. We will show in Theorem \ref{property of R} that the theory $\mathbf{R}$ has all these properties. In Theorem \ref{thm on R-like}, we will show that for any  property $P$  in Remark \ref{convention} and any consistent RE theory $T$, the assumption that $T$ has the property $P$  does not necessarily imply that $T$ interprets $\mathbf{R}$.

\begin{lemma}\label{RFD imply RSS}
\begin{enumerate}[(1)]
  \item The property {\sf RFD} implies {\sf RSS}.
  \item The property {\sf RSS} implies {\sf RSW}.
\end{enumerate}
\end{lemma}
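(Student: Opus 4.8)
The statement bundles two elementary implications, so I would prove each part separately using the standard relationships among representability notions. The overall strategy is to reduce each representability property to the previous one in the chain by exhibiting, from the hypothesized representing formula, a formula witnessing the weaker property.

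For part (1), suppose $T$ is {\sf RFD}, and let $R\subseteq\mathbb{N}^n$ be an arbitrary recursive set (an $n$-ary recursive relation). The natural move is to pass to its characteristic function $\chi_R$, which is a total recursive function, hence definable in $T$ by hypothesis. So fix an $L(T)$-formula $\phi(x_1,\dots,x_n,y)$ defining $\chi_R$, meaning $T\vdash \forall y[\phi(\overline{a_1},\dots,\overline{a_n},y)\leftrightarrow y=\overline{\chi_R(a_1,\dots,a_n)}]$ for every tuple. I would then set $\psi(x_1,\dots,x_n):=\phi(x_1,\dots,x_n,\overline{1})$ (taking the convention $\chi_R=1$ on $R$ and $0$ off $R$) and check that $\psi$ strongly represents $R$. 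Indeed, if $R(a_1,\dots,a_n)$ holds then $\chi_R(\vec a)=1$, so from the defining equivalence $T\vdash\psi(\overline{a_1},\dots,\overline{a_n})$; if $R(\vec a)$ fails then $\chi_R(\vec a)=0$, so $T\vdash\forall y[\phi(\overline{\vec a},y)\leftrightarrow y=\overline{0}]$, and instantiating at $y=\overline{1}$ together with $T\vdash\overline{1}\neq\overline{0}$ (available since $\mathbf{S}\overline{0}=\overline{0}$ would contradict consistency via the axioms on $\mathbf{S}$) yields $T\vdash\neg\psi(\overline{a_1},\dots,\overline{a_n})$. This gives strong representability of $R$, so $T$ is {\sf RSS}.

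For part (2), suppose $T$ is {\sf RSS} and let $R$ be recursive. Take an $L(T)$-formula $\phi$ strongly representing $R$; I claim the same $\phi$ weakly represents $R$. The forward direction $R(\vec a)\Rightarrow T\vdash\phi(\overline{\vec a})$ is immediate from strong representability. For the converse, suppose $T\vdash\phi(\overline{a_1},\dots,\overline{a_n})$ but $R(\vec a)$ fails; then strong representability also gives $T\vdash\neg\phi(\overline{a_1},\dots,\overline{a_n})$, contradicting the consistency of $T$. Hence $T\vdash\phi(\overline{\vec a})$ iff $R(\vec a)$, which is exactly weak representability, so $T$ is {\sf RSW}.

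Neither part presents a genuine obstacle; the only points requiring a little care are the mild nondegeneracy facts I am invoking about $T$, namely that $T$ is consistent (used crucially in part (2) to rule out simultaneous provability and refutability) and that $T$ proves $\overline{0}\neq\overline{1}$ so that $y=\overline{0}$ forces $\neg(y=\overline{1})$ in part (1). Both hold under the standing assumption that $T$ is a consistent RE theory in a language containing $\mathbf{0}$ and $\mathbf{S}$; the distinctness of distinct numerals is available whenever $T$ can refute $\overline{m}=\overline{n}$ for $m\neq n$, which I would note is the only arithmetical input needed. I would therefore flag the consistency hypothesis as the load-bearing assumption and otherwise present the two reductions directly.
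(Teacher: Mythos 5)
Your proof is correct and follows essentially the same route as the paper's: part (1) passes to the characteristic function and substitutes $\overline{1}$ for $y$ in its defining formula, and part (2) is the observation that consistency upgrades strong representability to weak representability. The only point where you go beyond the paper is in flagging that $T\vdash \overline{0}\neq\overline{1}$ is needed to turn $y=\overline{0}$ into $\neg(y=\overline{1})$; be aware, though, that your justification of this ``via the axioms on $\mathbf{S}$'' appeals to axioms that the general definition of an {\sf RFD} theory (a consistent RE theory whose language merely contains $\mathbf{0}$ and $\mathbf{S}$) does not supply, so provable distinctness of numerals is genuinely an extra tacit nondegeneracy hypothesis here rather than something derivable --- one the paper's own one-line proof silently assumes as well.
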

\begin{proof}\label{}
\begin{enumerate}[(1)]
  \item Let $T$ be a consistent RE theory with {\sf RFD}. We show that $T$ is {\sf RSS}. Let $A$ be a recursive set and $c_A$ be the characteristic function of $A$. Since $c_A$ is recursive and $T$ is {\sf RFD}, let $\phi(x,y)$ be the representation formula of $c_A$ in $T$. Define $\psi(x):=\phi(x,\overline{1})$. Then $\psi(x)$ strongly represents $A$ in $T$.
  \item Follows from the definition.
\end{enumerate}
\end{proof}

\begin{theorem}\label{property of R}
The theory $\mathbf{R}$ has all the properties in Remark \ref{convention}.
\end{theorem}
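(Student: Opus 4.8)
The plan is to obtain all but two of the twelve properties by assembling results already in hand, and then to supply separate arguments for {\sf REW} and {\sf TP}. First I would record that $\mathbf{R}$ is {\sf Rosser} by Theorem \ref{R is Rosser}. Feeding this into the chain of Theorem \ref{relation about EI} gives, in order, that $\mathbf{R}$ is {\sf EI} (part (1)), hence {\sf RI} (part (2)), hence {\sf EU} (part (3)); and separately that {\sf EI} yields {\sf Creative} (part (4)), which yields $\mathbf{0}^{\prime}$ (part (5)). On the representability side, Fact \ref{key fact about R}(1) says $\mathbf{R}$ is {\sf RFD}, and Lemma \ref{RFD imply RSS} then gives {\sf RSS} and {\sf RSW}. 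Finally Fact \ref{key fact about R}(3) gives {\sf EHU}. This disposes of ten of the twelve properties with no new work.

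For {\sf REW}, I would argue that the very $\Sigma^0_1$ formula defining an RE set already weakly represents it. Given an RE set $A$, fix a $\Sigma^0_1$ formula $\sigma(x)$ with $\mathfrak{N}\models\sigma(\overline{n})$ iff $n\in A$. If $n\in A$ then $\sigma(\overline{n})$ is a true $\Sigma^0_1$ sentence, so $\mathbf{R}\vdash\sigma(\overline{n})$ by the $\Sigma^0_1$-completeness of Fact \ref{key fact about R}(2). Conversely, all axioms of $\mathbf{R}$ in Definition \ref{def of R} hold in $\mathfrak{N}$, so $\mathfrak{N}\models\mathbf{R}$; hence if $\mathbf{R}\vdash\sigma(\overline{n})$ then $\mathfrak{N}\models\sigma(\overline{n})$, i.e. $n\in A$. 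Thus $n\in A\Leftrightarrow\mathbf{R}\vdash\sigma(\overline{n})$, so $\sigma$ weakly represents $A$ and $\mathbf{R}$ is {\sf REW}.

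The main obstacle is {\sf TP}, which asks that $\mathbf{R}$ be undecidable and Turing-reducible to each of its consistent RE extensions $S$. Undecidability is immediate from {\sf EU} (or {\sf RI}). For the reduction $\mathbf{R}\leq_T S$, I would pass through creativity. The pair $(\mathbf{R}_P,\mathbf{R}_R)$ is {\sf EI} (established above), and since $S\supseteq\mathbf{R}$ is consistent we have $\mathbf{R}_P\subseteq S_P$ and $\mathbf{R}_R\subseteq S_R$ with $S_P\cap S_R=\emptyset$. A short lemma --- that any disjoint pair of RE sets extending an {\sf EI} pair is itself {\sf EI} via the same witnessing function (if $S_P\subseteq W_i$ and $S_R\subseteq W_j$ with $W_i\cap W_j=\emptyset$, then $\mathbf{R}_P\subseteq W_i$ and $\mathbf{R}_R\subseteq W_j$, so the {\sf EI} function for $(\mathbf{R}_P,\mathbf{R}_R)$ still applies) --- shows $(S_P,S_R)$ is {\sf EI}. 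Hence $S_P$ is creative by Fact \ref{fact on creative}(2). Now Fact \ref{fact on creative}(1) characterizes creativity as $m$-completeness: every RE set $m$-reduces to a creative theory's set of provable sentences. Applying this to both $\mathbf{R}$ and $S$, and using that $\mathbf{R}_P$ and $S_P$ are themselves RE, gives $\mathbf{R}_P\leq_m S_P$ and $S_P\leq_m\mathbf{R}_P$; in particular $\mathbf{R}_P\leq_T S_P$, i.e. $\mathbf{R}\leq_T S$.

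I expect the only genuinely delicate point to be this {\sf TP} argument, specifically the passage from $\mathbf{R}$ being creative to \emph{every} consistent RE extension being creative. The insight that resolves it is that effective inseparability is inherited upward by disjoint RE super-pairs with no change of witnessing function; once $S_P$ is known to be creative, $m$-completeness makes all these sets of theorems Turing-equivalent, so the reduction required by {\sf TP} is automatic, and the argument in fact goes through even when $S$ lives in a larger language, since $\mathbf{R}_P\subseteq S_P$ and $\mathbf{R}_R\subseteq S_R$ persist there as well.
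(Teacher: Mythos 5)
Your proposal is correct and follows essentially the same decomposition as the paper's own proof: Theorem \ref{R is Rosser} plus the implication chain of Theorem \ref{relation about EI} for {\sf Rosser}, {\sf EI}, {\sf RI}, {\sf EU}, {\sf Creative} and $\mathbf{0}^{\prime}$; Fact \ref{key fact about R} for {\sf EHU} and {\sf RFD} (with Lemma \ref{RFD imply RSS} giving {\sf RSS} and {\sf RSW}); $\Sigma^0_1$-completeness together with soundness in $\mathfrak{N}$ for {\sf REW}; and the observation that every consistent RE extension of $\mathbf{R}$ has Turing degree $\mathbf{0}^{\prime}$ for {\sf TP}. The only difference is that you spell out the justification the paper leaves implicit in its one-line {\sf TP} step --- namely that effective inseparability passes upward to the nuclei of any consistent RE extension, so each such extension is creative and hence of degree $\mathbf{0}^{\prime}$ --- which is a correct and welcome elaboration rather than a different argument.
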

\begin{proof}\label{}
\begin{enumerate}[(1)]
\item  From Theorem \ref{R is Rosser} and Theorem \ref{relation about EI}, $\mathbf{R}$ is {\sf Rosser}, {\sf EI}, {\sf RI}, {\sf EU},  {\sf Creative} and $\mathbf{0}^{\prime}$.
  \item Since any consistent RE extension of $\mathbf{R}$ has  Turing degree $\mathbf{0}^{\prime}$, $\mathbf{R}$ is {\sf TP}.
  \item From Fact \ref{key fact about R}(3), $\mathbf{R}$ is {\sf EHU}.
  \item Let $A$ be an RE set. The set $A$ is definable in the standard model $\mathfrak{N}$ by a $\Sigma^0_1$-formula. I.e., there is a $\Sigma^0_1$-formula $\phi(x)$ such that $n\in A\Leftrightarrow \mathfrak{N}\models \phi(\overline{n})$. Since $\mathbf{R}$ is $\Sigma^0_1$-complete, we have $\mathfrak{N}\models \phi(\overline{n})\Leftrightarrow \mathbf{R}\vdash \phi(\overline{n})$. Thus, $A$ is weakly representable in $\mathbf{R}$. Thus, $\mathbf{R}$ is {\sf REW}.
\item From Fact \ref{key fact about R}(1), any recursive function is definable in $\mathbf{R}$. Thus, $\mathbf{R}$ is {\sf RFD}.
\item  From (5) and Lemma \ref{RFD imply RSS}, $\mathbf{R}$ is {\sf RSS} and {\sf RSW}.
\end{enumerate}
\end{proof}

Now, we show that {\sf Rosser}  does not imply interpreting $\mathbf{R}$.
\smallskip

\begin{theorem}[\cite{EHU}, Theorem 3.1-3.2]~\label{EHU CT}
Let $T$ be a consistent RE theory. The following are equivalent:
\begin{enumerate}[(1)]
  \item The theory $T$ is  {\sf EHU}.
  \item For any consistent RE theory $S$ over the language of $T$, if $S$ has a consistent RE extension  $W$ such that $T$ is interpretable in $W$, then $S$ is undecidable.
  \item For any RE theory $S$ over the  language of $T$, if $S$ is consistent with $T$, then $S$ is undecidable.
\end{enumerate}
\end{theorem}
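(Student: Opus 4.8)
The plan is to prove the cycle of implications $(1)\Rightarrow(2)\Rightarrow(3)\Rightarrow(1)$, of which $(2)\Rightarrow(3)$ and $(3)\Rightarrow(1)$ are routine and $(1)\Rightarrow(2)$ carries all the content. For $(2)\Rightarrow(3)$: given an RE theory $S$ over $L(T)$ that is consistent with $T$, I would take $W:=S\cup T$. This is a consistent RE extension of $S$ — consistent because any common consistent extension of $S$ and $T$ contains $S\cup T$, and RE because $S$ and $T$ are — and $T\unlhd W$ via the identity interpretation since $W\supseteq T$. Then $(2)$ yields that $S$ is undecidable. For $(3)\Rightarrow(1)$: let $V$ be any consistent RE extension of $T$ and let $S$ be a sub-theory of $V$ over $L(T)$; since $S\cup T\subseteq V$ is consistent, $S$ is consistent with $T$, so $(3)$ gives that $S$ is undecidable. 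Hence every consistent RE extension of $T$ is {\sf HU}, i.e. $T$ is {\sf EHU}.

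The substantive direction is $(1)\Rightarrow(2)$; the difficulty is that $W$ only \emph{interprets} $T$ rather than extending it, so {\sf EHU} cannot be applied to $W$ directly and the interpretation must be used to move back into the language of $T$. Assume $T$ is {\sf EHU}, let $S$ be consistent RE over $L(T)$ with a consistent RE extension $W$ and an interpretation $\tau$ witnessing $T\unlhd W$ (so $L(W)=L(T)$), and suppose for contradiction that $S$ is decidable. The device is the \emph{pullback} along $\tau$. First set $T^{\ast}:=\{\phi\in L(T): W\vdash\phi^{\tau}\}$; the standard properties of interpretations show that $T^{\ast}$ is a deductively closed RE theory over $L(T)$ that contains $T$ and is consistent (using that $W$ is consistent and proves the domain of $\tau$ nonempty), so $T^{\ast}$ is a consistent RE extension of $T$ and is therefore {\sf HU} by {\sf EHU}. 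Next, let $S'$ be $S$ together with the finitely many domain and congruence axioms of $\tau$; these are provable in $W$, so $S'\subseteq W$ is still consistent, and $S'$ is still decidable since only finitely many sentences are adjoined. Put $S^{\ast}:=\{\phi\in L(T): S'\vdash\phi^{\tau}\}$. Then $S^{\ast}\subseteq T^{\ast}$ because $S'\subseteq W$; $S^{\ast}$ is deductively closed because $S'$ proves the interpretation axioms; and $S^{\ast}$ is decidable because $S'$ is decidable and $\phi\mapsto\phi^{\tau}$ is recursive. Thus $S^{\ast}$ is a decidable sub-theory of $T^{\ast}$ over $L(T)$, contradicting that $T^{\ast}$ is {\sf HU}. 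Hence $S$ is undecidable, which establishes $(1)\Rightarrow(2)$ and closes the cycle.

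The main obstacle is exactly this pullback step: one must verify, in the interpretation framework of \cite{on Q}, that translation is sound for first-order consequence modulo the domain and congruence axioms, so that the pullbacks $T^{\ast}$ and $S^{\ast}$ are genuine deductively closed theories and that $T$-provability transfers into $T^{\ast}$. The secondary point is bookkeeping: ensuring $S^{\ast}$ remains a \emph{decidable} theory requires adjoining the finitely many interpretation axioms to $S$ (harmless, since they already hold in $W$) so that decidability of $S$ passes through the recursive translation $\phi\mapsto\phi^{\tau}$. Once these facts are in place, the contradiction with {\sf HU} of $T^{\ast}$ is immediate, and the two routine directions complete the equivalence.
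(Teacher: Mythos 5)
Your proof is correct. Note first that the paper does not actually prove this theorem: it is imported verbatim from Visser's \emph{Essential Hereditary Undecidability} (Theorems 3.1--3.2 of \cite{EHU}), so there is no in-paper argument to compare against. Your reconstruction is essentially Visser's own strategy: the two easy directions $(2)\Rightarrow(3)$ (take $W:=S\cup T$ with the identity interpretation) and $(3)\Rightarrow(1)$ (a sub-theory of a consistent extension of $T$ is consistent with $T$) are handled correctly, and the substantive direction $(1)\Rightarrow(2)$ is carried by exactly the right device, namely the pullback $T^{\ast}=\{\phi: W\vdash\phi^{\tau}\}$, which is a consistent, deductively closed, RE extension of $T$ in $L(T)$ and hence {\sf HU}, together with the decidable pullback $S^{\ast}$ of $S$ augmented by the finitely many admissibility axioms of $\tau$. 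All the verifications you list (deductive closure via soundness of translation over the domain/congruence axioms, $S^{\ast}\subseteq T^{\ast}$, decidability of $S^{\ast}$ through the recursive map $\phi\mapsto\phi^{\tau}$) are the genuine content and are handled correctly. One small remark: your parenthetical assumption $L(W)=L(T)$ is never actually used in the pullback argument, which is fortunate, because the paper later applies this theorem (in Proposition \ref{coro of R-like} and Theorem \ref{R-like imply HU}) in situations where $S$ and $W$ live in a language different from $L(T)$; your proof of $(1)\Rightarrow(2)$ covers that more general reading without change.
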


\begin{proposition}\label{coro of R-like}
Let $T$ be a consistent RE theory. If $T$ interprets $\mathbf{R}$, then $T$ has the following properties:  {\sf EI}, {\sf RI}, {\sf TP}, {\sf EHU}, {\sf EU},  {\sf Creative} and $\mathbf{0}^{\prime}$.
\end{proposition}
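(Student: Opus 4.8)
The plan is to show that each of the seven properties transfers from $\mathbf{R}$ to $T$ along the given interpretation $\mathbf{R}\unlhd T$, using that $\mathbf{R}$ itself enjoys all of them by Theorem \ref{property of R}. The backbone of the argument is to establish {\sf EI} for $T$ first; once this is in place, {\sf RI}, {\sf EU}, {\sf Creative} and $\mathbf{0}^{\prime}$ come for free from Theorem \ref{relation about EI}(2)--(5), and {\sf TP} follows with a short Turing-degree computation. The genuinely delicate case is {\sf EHU}, which I expect to be the main obstacle.

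For {\sf EI}, I would argue via Theorem \ref{EI thm}, so that it suffices to semi-reduce an arbitrary disjoint pair $(A,B)$ of RE sets to $(T_P,T_R)$. Fix a recursive translation $\tau$ witnessing $\mathbf{R}\unlhd T$; by the definition of interpretation it sends sentences provable in $\mathbf{R}$ to sentences provable in $T$, and sentences refutable in $\mathbf{R}$ to sentences refutable in $T$. By Theorem \ref{R is Rosser}, $\mathbf{R}$ is {\sf Rosser}, so there is a formula $\psi(x)$ with $n\in A\Rightarrow\mathbf{R}\vdash\psi(\overline{n})$ and $n\in B\Rightarrow\mathbf{R}\vdash\neg\psi(\overline{n})$. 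Then the recursive map $n\mapsto\ulcorner\tau(\psi(\overline{n}))\urcorner$ sends $A$ into $T_P$ and $B$ into $T_R$, i.e.\ it semi-reduces $(A,B)$ to $(T_P,T_R)$. Hence $T$ is {\sf EI} by Theorem \ref{EI thm}, and then {\sf RI}, {\sf EU}, {\sf Creative} and $\mathbf{0}^{\prime}$ follow from Theorem \ref{relation about EI}(2), (3), (4) and (5) respectively; in particular $T_P$ is creative and $T$ has Turing degree $\mathbf{0}^{\prime}$.

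For {\sf TP}, note first that $T$ is undecidable, being {\sf EU}. Let $S$ be any consistent RE extension of $T$. Since $S$ proves every theorem of $T$, the identity is an interpretation of $T$ in $S$, so $\mathbf{R}\unlhd T\unlhd S$ and $\mathbf{R}\unlhd S$ by transitivity. Running the previous paragraph with $S$ in place of $T$ shows $S$ is {\sf EI}, hence $S_P$ is creative and $S$ has Turing degree $\mathbf{0}^{\prime}$. As $T$ also has degree $\mathbf{0}^{\prime}$, both $T$ and $S$ lie in the degree $\mathbf{0}^{\prime}$, so in particular $T\leq_T S$. Thus $T$ is {\sf TP}.

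Finally, for {\sf EHU} I would transfer the {\sf EHU} of $\mathbf{R}$ (Fact \ref{key fact about R}(3)) through Theorem \ref{EHU CT}. By the equivalence (1)$\Leftrightarrow$(3) applied to $T$, it suffices to show that every RE theory $S'$ over $L(T)$ that is consistent with $T$ is undecidable. The intended device is to pull $S'$ back along the interpretation: with $S_0=\{\phi\in L(\mathbf{R}): S'\vdash \tau(\phi)\}$ (suitably relativized), a common consistent extension of $S'$ and $T$ witnesses that the pullback is consistent with $\mathbf{R}$, while decidability of $S'$ would make $S_0$ decidable, contradicting the {\sf EHU} of $\mathbf{R}$ via the equivalence (1)$\Leftrightarrow$(3) for $\mathbf{R}$. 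I expect the main obstacle to lie precisely here: one must account for the domain and relativization axioms of the interpretation carefully, so that the pullback is a genuine theory over $L(\mathbf{R})$, is consistent with $\mathbf{R}$, and inherits decidability from $S'$. This is the only step that is not a routine consequence of the machinery assembled above.
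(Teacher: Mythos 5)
Your treatment of {\sf EI}, the derived properties ({\sf RI}, {\sf EU}, {\sf Creative}, $\mathbf{0}^{\prime}$ via Theorem \ref{relation about EI}), and {\sf TP} matches the paper's proof in substance: the paper also semi-reduces an arbitrary disjoint RE pair to $(T_P,T_R)$ by composing a witness for $\mathbf{R}$ with the translation function (it composes with the semi-reduction to $(\mathbf{R}_P,\mathbf{R}_R)$ coming from Theorem \ref{EI thm}, whereas you compose with a Rosser separating formula from Theorem \ref{R is Rosser}; these are interchangeable), and it gets {\sf TP} from the observation that every consistent RE extension of $T$ still interprets $\mathbf{R}$ and hence has Turing degree $\mathbf{0}^{\prime}$.

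The one place you diverge is {\sf EHU}, and it is also the one place you leave a gap. You route the argument through clause (3) of Theorem \ref{EHU CT} for both $T$ and $\mathbf{R}$, which forces you to pull an arbitrary RE theory $S'$ over $L(T)$ back along the interpretation to a theory over $L(\mathbf{R})$, verify that the pullback is a deductively sensible $L(\mathbf{R})$-theory, that it is consistent with $\mathbf{R}$, and that it inherits decidability from $S'$ --- and you explicitly stop short of carrying this out. The construction can be made to work (take $S_0=\tau^{-1}[S']$ and note that $\tau^{-1}[S'+T]$ is a consistent extension of both $S_0$ and $\mathbf{R}$), but it amounts to re-proving part of the content of Theorem \ref{EHU CT}. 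The paper avoids all of this by invoking clause (2) of Theorem \ref{EHU CT} for $\mathbf{R}$ instead: if $S$ is a consistent RE extension of $T$ and $W$ is a sub-theory of $S$ over the same language, then $W$ has a consistent RE extension interpreting $\mathbf{R}$ --- namely $S$ itself, since $\mathbf{R}\unlhd T\unlhd S$ --- so clause (2), which holds because $\mathbf{R}$ is {\sf EHU}, immediately gives that $W$ is undecidable. The same move disposes of your clause-(3) variant: the common extension $S'+T$ of $S'$ and $T$ already interprets $\mathbf{R}$, so clause (2) applies to $S'$ directly with no pullback. In short, the ``main obstacle'' you identify is not actually an obstacle; the equivalence you cite already packages the transfer across the interpretation, and you should use clause (2) rather than clause (3) to exploit it.
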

\begin{proof}\label{}
We only show that $T$ is {\sf EI}, {\sf TP} and {\sf EHU}. The other claims follow from Theorem \ref{relation about EI}.

We first show that $T$ is {\sf EI}. Let $(A,B)$ be a disjoint pair of RE sets. By Theorem \ref{EI thm}, it suffices to show that $(A,B)$ is semi-reducible to $(T_P, T_R)$. Since $\mathbf{R}$ is {\sf EI}, by Theorem \ref{EI thm}, there exists a recursive function $f$ such that $n \in A\Rightarrow f(n)\in \mathbf{R}_P$ and $n \in B\Rightarrow f(n)\in \mathbf{R}_R$. Let $I$ be the recursive translation function such that for any sentence $\phi$, if $\mathbf{R}\vdash\phi$, then $T\vdash \phi^{I}$.

Define the function $g$ as follows:
\[
 g(n) =
  \begin{cases}
   \ulcorner \phi^{I}\urcorner, & \text{if $n=\ulcorner \phi\urcorner$ for some sentence $\phi$;}\\
   0,       & \text{otherwise.}
  \end{cases}
\]
Define $h(n)=g(f(n))$. Note that $h$ is recursive since $f, g$ are recursive. Suppose $n \in A$ and $f(n)=\ulcorner\phi\urcorner$ such that $\mathbf{R}\vdash\phi$. Then $h(n)\in T_P$ since $T\vdash \phi^{I}$ and $h(n)=\ulcorner \phi^{I}\urcorner$. Suppose $n \in B$ and $f(n)=\ulcorner\theta\urcorner$ such that $\mathbf{R}\vdash\neg\theta$. Then $h(n)\in T_R$ since $T\vdash \neg\theta^{I}$ and $h(n)=\ulcorner \theta^{I}\urcorner$. Thus, $(A,B)$ is semi-reducible to $(T_P, T_R)$ via the recursive function $h$.

Suppose $\mathbf{R}\unlhd T$ and $S$ is a consistent RE extension of $T$. Then $S$ has  Turing degree $\mathbf{0}^{\prime}$. Hence $T$ is {\sf TP}.

Suppose $\mathbf{R}\unlhd T, S$ is a consistent RE extension of $T$ over the same language  and $W$ is a sub-theory of $S$ over the same language. Since $\mathbf{R}$ is {\sf EHU} and $W$ has a consistent RE extension interpreting $\mathbf{R}$, by Theorem \ref{EHU CT}, $W$ is undecidable. Thus, $T$ is {\sf EHU}.
\end{proof}
\smallskip

\begin{fact}[\cite{RM}, p.57]~\label{special rf}
There exists a recursive function $e(x, y)$ such that for any $i, j\in\omega$, $W_{e(i, j)}=\{x: \exists y[T_1(i, x, y) \wedge \forall z\leq y \neg T_1(j, x, z)]\}$. The function $e(x, y)$ has the following property: if $W_i\cap W_j=\emptyset$, then $e(i,j)=i$ and $e(j,i)=j$.
\end{fact}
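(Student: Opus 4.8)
The plan is to separate the two assertions in the statement. The \emph{extensional} identity $W_{e(i,j)}=\{x:\exists y[T_1(i,x,y)\wedge\forall z\le y\,\neg T_1(j,x,z)]\}$ is a routine parametrization argument, whereas the \emph{intensional} normalization $e(i,j)=i$ (and symmetrically $e(j,i)=j$) in the disjoint case carries the real content. First I would settle the extensional part. Define a partial recursive function $g$ of three arguments so that $g(i,j,x)$ searches for the least $y$ with $T_1(i,x,y)$ while $\neg T_1(j,x,z)$ holds for all $z\le y$, outputting $0$ if such a $y$ is found and diverging otherwise. Since $T_1$ is primitive recursive, $g$ is partial recursive uniformly in $(i,j,x)$, so by the s-m-n theorem there is a primitive recursive function $s$ with $\phi_{s(i,j)}(x)\simeq g(i,j,x)$ for all $x$; as $W_{s(i,j)}$ is the domain of $\phi_{s(i,j)}$, this equals the displayed set. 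This already yields the first sentence of the Fact with $e$ replaced by $s$.

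It remains to arrange that the index returned agrees with $i$ whenever $W_i\cap W_j=\emptyset$. The motivating observation is that in that case the displayed set collapses to $W_i$: if $x\in W_i$, choose any $y$ with $T_1(i,x,y)$; disjointness gives $x\notin W_j$, so no $z$ at all satisfies $T_1(j,x,z)$, whence $x$ lies in the displayed set, and the reverse inclusion is immediate. Thus $i$ is already \emph{a} correct index for the set, and the only task left is to force the construction to output precisely this index.

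The hard part, and the main obstacle, is exactly this last normalization, because the hypothesis $W_i\cap W_j=\emptyset$ is $\Pi^0_1$ and in particular undecidable: the function $e$ cannot test disjointness and branch on it, so the equality $e(i,j)=i$ must be guaranteed uniformly by the shape of the construction rather than by a case split. The route I would take is to build $e$ through the recursion theorem with parameters, designing an index operator whose fixed point is forced to coincide with $i$ precisely when $j$ contributes no interference (which disjointness guarantees), while still computing the displayed set in general. Concretely, I would work in Smullyan's enumeration-based framework, where $W_{e(i,j)}$ is presented as ``enumerate $W_i$, cancelling $x$ only if a $j$-witness appears no later than the first $i$-witness,'' and verify that when no cancellation ever occurs the code of this procedure normalizes to $i$ itself. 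Checking that the bookkeeping can be set up so that this normalization is literal, and simultaneously symmetric in the roles of $i$ and $j$ so as to deliver both $e(i,j)=i$ and $e(j,i)=j$, is where the care is needed; everything else is a direct s-m-n computation.
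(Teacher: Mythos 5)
First, a point of comparison: the paper does not prove this statement at all --- it is quoted as a Fact from Smullyan's \emph{Recursion Theory for Meta-Mathematics}, so there is no internal argument to measure your attempt against. Your first paragraph is correct and is the standard argument: the s-m-n construction of an index for the set $S_{i,j}:=\{x:\exists y[T_1(i,x,y)\wedge\forall z\leq y\,\neg T_1(j,x,z)]\}$ is exactly right, and so is your observation that $S_{i,j}=W_i$ whenever $W_i\cap W_j=\emptyset$ (the inclusion $S_{i,j}\subseteq W_i$ holds always, and disjointness makes the side condition on $j$-witnesses vacuous).

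The genuine gap is in the second half, and it is not one that more careful bookkeeping can close: the literal normalization $e(i,j)=i$ that you isolate as ``the real content'' cannot be realized by any total recursive $e$ satisfying the first clause. Indeed, $R:=\{(i,j):e(i,j)=i\}$ is a recursive set; the second clause forces $D:=\{(i,j):W_i\cap W_j=\emptyset\}\subseteq R$, while the first clause forces $R\subseteq D':=\{(i,j):W_i= S_{i,j}\}$. No recursive set lies between $D$ and $D'$: starting from a recursively inseparable RE pair $(A,B)$, one can by s-m-n produce recursive $n\mapsto(i_n,j_n)$ so that for $n\in A$ both programs halt on input $0$ with the $j_n$-witness $\leq$ the $i_n$-witness (have program $i_n$ first simulate program $j_n$ to completion and then idle past its witness code), whence $0\in W_{i_n}\setminus S_{i_n,j_n}$ and $(i_n,j_n)\notin D'$; while for $n\in B$ program $j_n$ diverges everywhere, so $(i_n,j_n)\in D$. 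A recursive $R$ with $D\subseteq R\subseteq D'$ would then pull back to a recursive separation of $(B,A)$, a contradiction. This also explains why your recursion-theorem plan cannot work even in principle: fixed points there are only extensional ($W_{n}=W_{g(n)}$), and nothing can force the computed index to coincide numerically with the input $i$ on a non-recursive set of pairs. The conclusion to draw is that the second sentence of the Fact must be read extensionally --- if $W_i\cap W_j=\emptyset$ then $W_{e(i,j)}=W_i$ and $W_{e(j,i)}=W_j$ --- which is what Smullyan actually proves, what Putnam's theory ${\sf E}$ needs, and what your own second paragraph already establishes in full; once that reading is adopted, your proof is complete and there is no remaining ``hard part.''
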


\begin{example}\label{Putnam E}
Hilary Putnam  constructs in \cite[p.53]{Putnam 60} a decidable theory ${\sf D}$ with infinite
signature, having an essentially undecidable extension ${\sf E}$ as follows:
${\sf D}$ is monadic quantification theory with an infinite
set of individual constants $\overline{1}, \overline{2}, \cdots, \overline{n}, \cdots$ and an infinite set of monadic predicate $P_1, P_2, \cdots, P_n, \cdots$.
The theory ${\sf E}$  is obtained by adjoining to ${\sf D}$ the following infinite list of axioms where $e(x, y)$ is the recursive function in Fact \ref{special rf}:
\begin{description}
  \item[$A_1$] $P_i(\overline{n})$, if $n \in W_i$;
  \item[$A_2$] $\forall x [P_{e(i,j)}(x)\rightarrow \neg P_{e(j,i)}(x)]$.
\end{description}
Putnam shows in \cite[pp.53-54]{Putnam 60} that ${\sf E}$ is {\sf EU}. Smullyan shows in \cite[Theorem 3]{Smullyan} that  ${\sf E}$ is {\sf Rosser}.
\end{example}

\begin{theorem}\label{R-like imply HU}
Let $T$ be a consistent RE theory. If $T$ interprets $\mathbf{R}$, then $T$ is {\sf HU}.
\end{theorem}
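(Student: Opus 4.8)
The plan is to derive {\sf HU} directly from the {\sf EHU} property, which has already been established for such $T$ in Proposition \ref{coro of R-like}. Indeed, {\sf EHU} asserts that every consistent RE extension of $T$ is {\sf HU}, and $T$ is itself a consistent RE extension of $T$ (the reflexive extension). Hence $T$ is {\sf HU}. So the entire content of the theorem is the observation that {\sf EHU} implies {\sf HU}, together with the fact, from Proposition \ref{coro of R-like}, that interpreting $\mathbf{R}$ forces {\sf EHU}.

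To spell this out without relying on whether $T$ is counted as an extension of itself, I would argue as follows. Let $S$ be an arbitrary sub-theory of $T$ over the language $L(T)$; by definition $S\subseteq T$ as sets of theorems. Then $S$ is consistent with $T$, since $T$ is a common consistent RE extension of $S$ and of $T$ over $L(T)$. By Proposition \ref{coro of R-like}, $T$ is {\sf EHU}, so by the equivalence of (1) and (3) in Theorem \ref{EHU CT} applied to $T$ itself, every RE theory over $L(T)$ that is consistent with $T$ is undecidable. In particular $S$ is undecidable. Since $S$ was an arbitrary sub-theory of $T$ over the same language, $T$ is {\sf HU}.

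There is no substantial obstacle here: the only points requiring care are conventions, namely that a sub-theory $S\subseteq T$ over $L(T)$ is automatically consistent with $T$, and that the characterization in Theorem \ref{EHU CT} is invoked for $T$ itself (where $\mathbf{R}\unlhd T$ guarantees {\sf EHU} via Proposition \ref{coro of R-like}) rather than for $\mathbf{R}$, which sidesteps any mismatch between $L(T)$ and $L(\mathbf{R})$. This theorem is precisely the tool that, combined with Putnam's theory ${\sf E}$ of Example \ref{Putnam E} possessing the decidable sub-theory ${\sf D}$, will later witness that {\sf Rosser} (or {\sf EU}) does not imply interpretability of $\mathbf{R}$.
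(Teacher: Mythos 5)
Your proof is correct, and it reaches the conclusion by a slightly different decomposition than the paper. The paper's own proof does not pass through Proposition \ref{coro of R-like} at all: it assumes $T$ is not {\sf HU}, takes a decidable sub-theory $S$ of $T$, and applies Theorem \ref{EHU CT} with $\mathbf{R}$ as the {\sf EHU} theory (clause (2): $S$ has a consistent RE extension, namely $T$, interpreting $\mathbf{R}$, hence $S$ is undecidable). You instead first upgrade $T$ itself to {\sf EHU} via Proposition \ref{coro of R-like} and then observe that {\sf EHU} implies {\sf HU}, implemented through clause (3) of Theorem \ref{EHU CT} applied to $T$. Both routes rest on the same underlying machinery ($\mathbf{R}$ being {\sf EHU} plus Visser's characterization theorem); yours is arguably cleaner at the top level, since ``{\sf EHU} $\Rightarrow$ {\sf HU}'' is the real content, and it keeps all language-matching internal to $L(T)$ — though note that the language bookkeeping you sidestep reappears inside the proof of Proposition \ref{coro of R-like}, so it is relocated rather than eliminated. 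One small point of care: when you apply clause (3) to ``an arbitrary sub-theory $S$,'' that clause quantifies over \emph{RE} theories, so strictly you should either argue by contradiction from a decidable (hence RE) sub-theory, as the paper does, or note that a non-RE sub-theory is automatically undecidable. With that trivial remark added, the argument is complete and non-circular, since Proposition \ref{coro of R-like} precedes and does not depend on Theorem \ref{R-like imply HU}.
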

\begin{proof}\label{}
Suppose $T$ interprets $\mathbf{R}$ but $T$ is not {\sf HU}.
Then there exists a sub-theory $S$ of $T$ over the same language  such that $S$ is decidable.
Since $\mathbf{R}$ is  {\sf EHU} and $S$ has an extension interpreting $\mathbf{R}$, by Theorem \ref{EHU CT}, $S$ is undecidable which is a contradiction.
\end{proof}

\begin{theorem}\label{Rosser not imply R-like}
For a consistent RE theory $T$, in general
``$T$ is {\sf Rosser}" does not imply ``$T$ interprets $\mathbf{R}$".
\end{theorem}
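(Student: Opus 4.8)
The plan is to exhibit an explicit witness theory that is {\sf Rosser} but fails to interpret $\mathbf{R}$, and the natural candidate is already in hand: Putnam's theory ${\sf E}$ from Example \ref{Putnam E}. Recall that ${\sf E}$ is {\sf Rosser} (this is Smullyan's result cited there), so the entire burden is to show that ${\sf E}$ does \emph{not} interpret $\mathbf{R}$. Rather than attack interpretability directly, I would route the argument through hereditary undecidability, using the implication established in Theorem \ref{R-like imply HU}, namely that interpreting $\mathbf{R}$ forces {\sf HU}. Taking the contrapositive, it suffices to prove that ${\sf E}$ is \emph{not} {\sf HU}.

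The crucial structural feature to exploit is that ${\sf E}$ is built by adjoining the axiom lists $A_1$ and $A_2$ to the monadic quantification theory ${\sf D}$, so ${\sf D}$ is a sub-theory of ${\sf E}$ \emph{over the same language} (the same infinite signature of constants $\overline{1},\overline{2},\dots$ and monadic predicates $P_1,P_2,\dots$). Since ${\sf D}$ is a decidable theory, it is a decidable sub-theory of ${\sf E}$ over the same language, and therefore ${\sf E}$ fails the definition of {\sf HU} in Definition \ref{def of 1.2}(3): not every sub-theory over the same language is undecidable.

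Assembling these pieces gives a short argument. Suppose toward a contradiction that ${\sf E}$ interprets $\mathbf{R}$. Then by Theorem \ref{R-like imply HU}, ${\sf E}$ is {\sf HU}, so every sub-theory of ${\sf E}$ over the same language is undecidable. But ${\sf D}$ is such a sub-theory and ${\sf D}$ is decidable, a contradiction. Hence ${\sf E}$ does not interpret $\mathbf{R}$, while ${\sf E}$ is {\sf Rosser}, witnessing that {\sf Rosser} does not in general imply interpreting $\mathbf{R}$.

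I do not anticipate a genuine obstacle here, since the heavy lifting has already been done upstream: the non-trivial facts are Putnam's construction (that ${\sf D}$ is decidable and ${\sf E}$ is essentially undecidable), Smullyan's theorem that ${\sf E}$ is {\sf Rosser}, and the {\sf EHU}-based implication feeding Theorem \ref{R-like imply HU}. The only point requiring a moment of care is to confirm that ${\sf D}$ and ${\sf E}$ share the same language, so that ${\sf D}$ legitimately counts as a same-language sub-theory in the definition of {\sf HU}; this is immediate from the way ${\sf E}$ is defined as ${\sf D}$ together with additional axioms. Everything else is a one-line contrapositive application.
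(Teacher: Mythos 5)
Your proposal is correct and follows exactly the paper's own argument: take Putnam's theory ${\sf E}$, which is {\sf Rosser} by Smullyan's result, observe that the decidable same-language sub-theory ${\sf D}$ shows ${\sf E}$ is not {\sf HU}, and apply Theorem \ref{R-like imply HU} contrapositively to conclude ${\sf E}$ does not interpret $\mathbf{R}$. No differences of substance from the paper's proof.
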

\begin{proof}\label{}
Consider Putnam's theory ${\sf E}$ in Example \ref{Putnam E}. The theory ${\sf E}$ is {\sf Rosser} from Example \ref{Putnam E}.
Since ${\sf D}$ is a decidable sub-theory of ${\sf E}$ over the same language, ${\sf E}$ is not {\sf HU}. By Theorem \ref{R-like imply HU}, ${\sf E}$ does not interpret $\mathbf{R}$.
\end{proof}

Now, we show that ${\sf EI}$  does not imply interpreting $\mathbf{R}$.

\begin{remark}\label{theory of J}
In this paper, we use Janiczak's theory {\sf J} introduced in \cite[p.136]{Janiczak}, which is a theory  having only one binary relation symbol $E$ with the
following axioms.
\begin{description}
  \item[{\sf J1}] $E$ is an equivalence relation.
  \item[{\sf J2}] For any $n\in\omega$,  there is at most one equivalence class of size precisely $n$.
  \item[{\sf J3}] For any $n\in\omega$, there are at least $n$ equivalence classes with at least $n$ elements.\footnote{We include the axiom {\sf J3} to make the proof of the following fact in Theorem \ref{J thm} easier: over {\sf J}, every sentence is equivalent to a Boolean combination of $A_n$'s.}
\end{description}
We define $A_n$ to be the sentence claiming that there exists an equivalence class of size precisely $n + 1$. Note that $A_n$'s are mutually independent over {\sf J}.
\end{remark}
\smallskip

\begin{theorem}~\label{J thm}
\begin{enumerate}[(1)]
  \item {\sf J} is decidable. (see \cite[Theorem 4]{Janiczak})
  \item Over {\sf J}, every sentence is equivalent with a Boolean combination of $A_n$'s.  In fact, given a sentence $\phi$, we can effectively find a Boolean combination of $A_n$'s which is equivalent with $\phi$ over {\sf J}. (see  \cite[Lemma 2]{Janiczak})
\end{enumerate}
\end{theorem}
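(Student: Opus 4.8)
The plan is to establish (2) first, by an Ehrenfeucht--Fra\"iss\'e analysis, and then to read off (1) as a consequence. The guiding observation is that a model $M$ of {\sf J} is just an equivalence relation, whose only isomorphism invariant is the multiset of sizes of its classes, and that the axioms constrain these sharply: by {\sf J2} each finite size is realized by at most one class, while by {\sf J3} there are, for every $n$, at least $n$ classes with at least $n$ elements, so arbitrarily large classes are always abundant. Fix a sentence $\phi$ of quantifier rank $k$. I would argue that whether $M\models\phi$ depends only on the finite \emph{profile} $\{m\le N: M$ realizes a class of size exactly $m\}$, where $N=N(k)$ is a bound computable from $k$; equivalently, it depends only on the truth values in $M$ of $A_0,\ldots,A_{N-1}$.

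The heart of the matter is the following game-theoretic claim: any two equivalence structures that (a) realize exactly the same class-sizes within $\{1,\ldots,N\}$ and (b) each have more than $k$ classes of size $>N$ satisfy the same sentences of quantifier rank $\le k$. Duplicator's strategy is to answer inside an already-matched class when possible, to open a new matched pair of equal size when Spoiler pebbles a fresh small class (this is possible and unambiguous because the profiles agree and, by {\sf J2}, small sizes are unique), and to reply among the large classes when Spoiler moves there, using the standard fact that an equivalence class carries no structure beyond equality, so a class of size $>N$ is indistinguishable in $k$ rounds from any other large or infinite class, while clause (b) guarantees Duplicator never exhausts the supply of fresh large partners. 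Making this precise is the step I expect to be the main obstacle: one must fix the bound $N$ so that ``size $>N$'' is genuinely a rank-$k$ blind spot, and one must track the within-class and across-class pebbling simultaneously so that the counting bounds are never violated. Note that every model of {\sf J} meets (a) and (b) for any common profile, by {\sf J2} and {\sf J3}.

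Granting the claim, (2) follows effectively. There are only finitely many profiles $P\subseteq\{1,\ldots,N\}$, and for each I would build a single finite test structure $M_P$, namely the disjoint union of one class of size $m$ for each $m\in P$ together with $k+1$ classes of size $N+1$. Each $M_P$ satisfies (a) and (b) for its own profile, hence has the same rank-$\le k$ theory as any model of {\sf J} with that profile. Finite model-checking decides whether $M_P\models\phi$, and the Boolean combination $\bigvee\bigl\{\bigwedge_{m\in P}A_{m-1}\wedge\bigwedge_{m\in\{1,\ldots,N\}\setminus P}\neg A_{m-1}: M_P\models\phi\bigr\}$ is then equivalent to $\phi$ over {\sf J}. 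Every step here is computable, so an equivalent Boolean combination of the $A_n$'s is found effectively.

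Finally I would deduce (1). Given $\phi$, compute from (2) an equivalent Boolean combination $B$ in $A_0,\ldots,A_{N-1}$. Since the $A_n$'s are mutually independent over {\sf J} (Remark \ref{theory of J}), every truth assignment to $A_0,\ldots,A_{N-1}$ is realized by some model of {\sf J}; hence ${\sf J}\vdash\phi$ iff $B$ is true under all such assignments, i.e.\ iff $B$ is a propositional tautology in the variables $A_0,\ldots,A_{N-1}$. Tautology-checking of an explicitly given Boolean formula is decidable, and therefore {\sf J} is decidable.
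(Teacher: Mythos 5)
Your proof is essentially correct, but it is worth saying up front that the paper does not prove this theorem at all: both parts are taken as citations to Janiczak (Theorem 4 and Lemma 2 of that paper, with axiom {\sf J3} added precisely to make part (2) smooth). So you have supplied a self-contained argument where the paper defers to the literature. Your route --- an Ehrenfeucht--Fra\"iss\'e analysis of equivalence structures showing that the rank-$k$ theory of a model of {\sf J} is determined by the set of realized class sizes $\le N(k)$, followed by finite model-checking of one test structure per profile, and finally reduction of provability to propositional tautology-checking via the joint independence of the $A_n$'s --- is a standard and correct modern treatment; Janiczak's original argument is an elimination-style analysis that yields the same normal form. Your decision to prove (2) first and derive (1) from it is clean and matches how the paper actually uses the theorem. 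Two points deserve care. First, your game claim as literally stated for arbitrary equivalence structures (``realize exactly the same class-sizes within $\{1,\ldots,N\}$'') is false if read as a condition on the \emph{sets} of realized sizes: a structure with two singleton classes and one with a single singleton class realize the same small sizes but are distinguished at low rank. The claim needs agreement of \emph{multiplicities} of small classes; this is harmless in your applications, since {\sf J2} forces multiplicity $\le 1$ in models of {\sf J} and your $M_P$ has multiplicity $1$ by construction (its $k+1$ classes of size $N+1$ all count as ``large''), and your strategy description does invoke this uniqueness --- but the claim should be stated accordingly. Second, the choice of $N(k)$ and the bookkeeping that interleaves within-class pebbling (pure equality, where size $>k$ is a rank-$k$ blind spot) with across-class pebbling is exactly where the real work lies; you flag this honestly, and it is routine but not free. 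With those emendations the argument is complete.
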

\smallskip

\begin{theorem}[The $s$-$m$-$n$ theorem, \cite{Rogers87}, p.23]~\label{}
For any $m, n\geq 1$, there exists a recursive function $s_n^m$ of $m+1$ variables such that for all $x, y_1, \cdots, y_m, z_1, \cdots, z_n$, we have
\[\phi^n_{s_n^m(x, y_1, \cdots, y_m)}(z_1, \cdots, z_n)=\phi_{x}^{m+n}(y_1, \cdots, y_m, z_1, \cdots, z_n).\]
\end{theorem}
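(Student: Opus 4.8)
The plan is to prove the theorem by exhibiting $s_n^m$ as an explicit, effectively computable transformation of programs: given an index $x$ for the $(m+n)$-ary partial recursive function $\phi^{m+n}_x$ and fixed values $y_1, \ldots, y_m$, I want to manufacture an index for the $n$-ary function obtained by freezing the first $m$ arguments to $y_1, \ldots, y_m$. Intuitively the new program, on input $z_1, \ldots, z_n$, simply loads the constants $y_1, \ldots, y_m$ and then calls the program coded by $x$ on the full tuple. Since this is a purely syntactic manipulation of codes, the resulting index will depend (primitive) recursively on $x, y_1, \ldots, y_m$, which is exactly the content of the theorem.

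First I would recall the two operations on indices that the standard numbering of the partial recursive functions supports, with $m$ and $n$ fixed throughout. There is a constant operation $\mathsf{c}$, primitive recursive in its argument, with $\phi^n_{\mathsf{c}(a)}(\vec z) = a$ for all $\vec z = (z_1, \ldots, z_n)$; there are fixed indices $p_1, \ldots, p_n$ of the $n$-ary projections, $\phi^n_{p_i}(\vec z) = z_i$; and there is a composition operation $\mathsf{Comp}$, primitive recursive, such that whenever $u$ indexes a $k$-ary function and $v_1, \ldots, v_k$ index $n$-ary functions,
\[
\phi^n_{\mathsf{Comp}(u, v_1, \ldots, v_k)}(\vec z) = \phi^k_u\bigl(\phi^n_{v_1}(\vec z), \ldots, \phi^n_{v_k}(\vec z)\bigr),
\]
with both sides defined on the same inputs. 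I would then set
\[
s_n^m(x, y_1, \ldots, y_m) := \mathsf{Comp}\bigl(x, \mathsf{c}(y_1), \ldots, \mathsf{c}(y_m), p_1, \ldots, p_n\bigr),
\]
which is primitive recursive in $x, y_1, \ldots, y_m$, being a composition of the primitive recursive operations $\mathsf{c}$ and $\mathsf{Comp}$ with the fixed constants $p_1, \ldots, p_n$; in particular it is total and recursive, as required (indeed the theorem only asks for recursiveness). Unwinding the defining equation for $\mathsf{Comp}$ and using $\phi^n_{\mathsf{c}(y_i)}(\vec z) = y_i$ and $\phi^n_{p_i}(\vec z) = z_i$ gives, for all $\vec z$,
\[
\phi^n_{s_n^m(x, y_1, \ldots, y_m)}(\vec z) = \phi^{m+n}_x(y_1, \ldots, y_m, z_1, \ldots, z_n),
\]
with equal domains of definition, which is the asserted identity.

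The hard part will be justifying that the operations $\mathsf{c}$ and $\mathsf{Comp}$ on indices are genuinely (primitive) recursive, i.e. that the chosen numbering of the partial recursive functions is \emph{acceptable} in the sense that one can effectively read a program off from a syntactic description. If the numbering is built from Kleene's normal form and the predicate $T_1$, this amounts to constructing, from $x$ and $y_1, \ldots, y_m$, the code of a machine whose computation on $\vec z$ is a bounded search for a halting computation of $x$ on the padded tuple, and verifying by the normal form theorem that this code computes the intended function; this single effectiveness check is where the concrete details of the indexing enter, and everything else is a routine unwinding of definitions.
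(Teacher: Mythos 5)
The paper does not prove this statement: it is imported verbatim as a standard result, with a citation to Rogers (p.~23), and is used as a black box (e.g.\ in the proof that the Janiczak-style theory is {\sf EI}). So there is no in-paper argument to compare yours against; I can only assess your proposal on its own terms.

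Your argument is the standard textbook proof and its structure is correct: define $s_n^m(x,\vec y)$ by composing the index $x$ with indices for the constant functions $y_1,\dots,y_m$ and the projections, and observe that the resulting map on indices is primitive recursive, hence total recursive. The one substantive caveat is that you have relocated, rather than discharged, the entire content of the theorem: the existence of primitive recursive index operations $\mathsf{c}$ and $\mathsf{Comp}$ satisfying the stated equations \emph{is} the effectiveness claim the s-m-n theorem encapsulates. For the numbering the paper actually uses (Kleene's, via the $T_1$ predicate and normal form), verifying that $\mathsf{Comp}$ is primitive recursive requires the concrete construction of a code for the composed machine and an appeal to the normal form theorem; you correctly identify this as the remaining work but do not carry it out. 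As a proof outline this is fine and matches what Rogers does; as a complete proof it still owes the reader that one construction.
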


\begin{theorem}\label{EI not R}
Define the theory $T:= {\sf J} + \{A_n: n\in B\} +\{\neg A_n: n\in C\}$ where $(B,C)$
is a disjoint pair of RE sets.
\begin{enumerate}[(1)]
  \item The theory $T$  does not interpret $\mathbf{R}$.
  \item If $(B,C)$ is ${\sf RI}$, then $T$ is ${\sf RI}$.
  \item If $(B,C)$ is ${\sf EI}$, then $T$ is ${\sf EI}$.
\end{enumerate}
\end{theorem}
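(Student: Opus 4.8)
The plan is to base all three parts on the single recursive function $r(n)=\ulcorner A_n\urcorner$, which semi-reduces the pair $(B,C)$ to the nuclei $(T_P,T_R)$. First I would record that $T$ is a consistent RE theory: its axiom set is RE (the decidable axioms of {\sf J} together with $\{A_n:n\in B\}$ and $\{\neg A_n:n\in C\}$, both RE since $B$ and $C$ are), and since $B\cap C=\emptyset$ and the $A_n$ are mutually independent over {\sf J} (Remark \ref{theory of J}), one produces a model of {\sf J} realizing precisely the prescribed class sizes, so $T$ is consistent. The semi-reduction is then immediate from the definition of $T$: if $n\in B$ then $A_n$ is an axiom, so $T\vdash A_n$ and $r(n)\in T_P$; if $n\in C$ then $\neg A_n$ is an axiom, so $T\vdash\neg A_n$ and $r(n)\in T_R$.

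For part (1) I would argue through hereditary undecidability, exactly as in Theorem \ref{Rosser not imply R-like}. The theory {\sf J} is a sub-theory of $T$ over the same language (both have only the symbol $E$, and $T$ extends {\sf J}), and {\sf J} is decidable by Theorem \ref{J thm}(1); hence $T$ is not {\sf HU}. If $T$ interpreted $\mathbf{R}$, then Theorem \ref{R-like imply HU} would force $T$ to be {\sf HU}, a contradiction, so $T$ does not interpret $\mathbf{R}$.

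For part (2) I would suppose toward a contradiction that $(T_P,T_R)$ is recursively separable, say by a recursive set $X$ with $T_P\subseteq X$ and $X\cap T_R=\emptyset$, and then pull $X$ back along $r$. The set $Y:=\{n:\ulcorner A_n\urcorner\in X\}$ is recursive, and the semi-reduction gives $B\subseteq Y$ (as $n\in B$ forces $r(n)\in T_P\subseteq X$) and $Y\cap C=\emptyset$ (as $n\in C$ forces $r(n)\in T_R$, hence $r(n)\notin X$). Thus $Y$ would recursively separate $(B,C)$, contradicting that $(B,C)$ is {\sf RI}; therefore $T$ is {\sf RI}.

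For part (3) the idea is that effective inseparability transfers upward along $r$, and the hard part will be the pullback via the $s$-$m$-$n$ theorem. Given indices $i,j$ with $T_P\subseteq W_i$, $T_R\subseteq W_j$ and $W_i\cap W_j=\emptyset$, I would compute (recursively in $i,j$) indices $i',j'$ with $W_{i'}=r^{-1}(W_i)$ and $W_{j'}=r^{-1}(W_j)$; the semi-reduction then yields $B\subseteq W_{i'}$, $C\subseteq W_{j'}$, and $W_{i'}\cap W_{j'}=\emptyset$. Applying a recursive function $g$ witnessing that $(B,C)$ is {\sf EI} gives $g(i',j')\notin W_{i'}\cup W_{j'}$, so $r(g(i',j'))\notin W_i\cup W_j$, and $(i,j)\mapsto r(g(i',j'))$ is a recursive {\sf EI}-function for $(T_P,T_R)$. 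The point needing care is verifying that the pulled-back indices really separate $(B,C)$ and that pushing $g(i',j')$ forward through $r$ lands outside $W_i\cup W_j$; equivalently, one can phrase this as: since $(B,C)$ is {\sf EI}, every disjoint RE pair semi-reduces to it, and composing with $r$ shows every disjoint RE pair semi-reduces to $(T_P,T_R)$, whence $T$ is {\sf EI} by Theorem \ref{EI thm}.
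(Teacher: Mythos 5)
Your proposal is correct and follows essentially the same route as the paper's proof: part (1) via the decidable sub-theory {\sf J} and Theorem \ref{R-like imply HU}, part (2) by pulling back the separating recursive set along $n\mapsto\ulcorner A_n\urcorner$, and part (3) by pulling back the indices via the $s$-$m$-$n$ theorem and pushing the {\sf EI}-witness for $(B,C)$ forward. The only additions are your explicit consistency check (which the paper leaves implicit) and the closing reformulation via semi-reducibility, which quietly uses the fact that an {\sf EI} pair of RE sets is universal for semi-reducibility among disjoint RE pairs — true, but slightly more general than the theory-level statement of Theorem \ref{EI thm}.
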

\begin{proof}\label{}
\begin{enumerate}[(1)]
  \item
Since {\sf J} is decidable, $T$ is not {\sf HU}. By Theorem \ref{R-like imply HU},  $T$  does not interpret $\mathbf{R}$.
\item Suppose $(B,C)$ is ${\sf RI}$ and $T$ is not {\sf RI}. Then there exists a recursive set $X$ such that $T_P\subseteq X$ and $X\cap T_R=\emptyset$. Define the function $f: n\mapsto \ulcorner A_n\urcorner$. Let $Y:=f^{-1}[X]$. Then $Y$ is a recursive set separating $B$ and $C$ since $f$ is recursive, $B\subseteq Y$ and $Y\cap C=\emptyset$. Thus, $(B,C)$ is not ${\sf RI}$ which is a contradiction.
\item We want to find a recursive function $h(i,j)$ such that if $T_P\subseteq W_i, T_R\subseteq W_j$ and $W_i\cap W_j=\emptyset$, then $h(i,j)\notin W_i\cup W_j$.
Define the function $f: n\mapsto \ulcorner A_n\urcorner$. Note that $f$ is recursive. By the s-m-n theorem, there is a recursive function $g$ such that $f^{-1}[W_i]=W_{g(i)}$. Suppose $(B,C)$ is {\sf EI} via the recursive function $t(i,j)$. Define $h(i,j)=f(t(g(i), g(j)))$. Note that $h$ is recursive.

Suppose
$T_P\subseteq W_i, T_R\subseteq W_j$ and $W_i\cap W_j=\emptyset$.
Note that $B\subseteq f^{-1}[T_P]\subseteq f^{-1}[W_i]=W_{g(i)}$, and $C\subseteq f^{-1}[T_R]\subseteq f^{-1}[W_j]=W_{g(j)}$.
Note that $W_{g(i)}\cap W_{g(j)}=\emptyset$ since $W_i\cap W_j=\emptyset$.
Since $(B,C)$ is {\sf EI} via the function $t$, we have $t(g(i), g(j))\notin W_{g(i)}\cup W_{g(j)}$. Then, $h(i,j)\notin W_i\cup W_j$. Thus, $T$ is {\sf EI}.
\end{enumerate}
\end{proof}

Now, we show that ${\sf TP}$  does not imply interpreting $\mathbf{R}$.\smallskip

\begin{theorem}[\cite{Shoenfield 58}, Theorem 1]~\label{Shoenfield first}
For any non-recursive RE set $A$, we can effectively find an {\sf RI} pair $(B,C)$ such  that $B,C {\sf \leq_T} A$ and for any RE set $D$ separating $B$ and $C$ (i.e., $B\subseteq D$ and $D\cap C=\emptyset$), we have $A {\sf \leq_T} D$.
\end{theorem}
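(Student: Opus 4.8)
The plan is to build the pair $(B,C)$ by a direct, effective stage-by-stage construction driven by a fixed enumeration of $A$, and then to read the degree of $A$ off of every separator. Since $A$ is non-recursive it is infinite, so I first fix a $1$--$1$ recursive enumeration $A=\{a_0,a_1,a_2,\dots\}$, where $a_s$ is enumerated at stage $s$, and I organize $B$ and $C$ into columns via a recursive pairing, so that column $n$ consists of the elements $\langle n,0\rangle,\langle n,1\rangle,\dots$. At each stage $s$ the construction inspects the finite approximation $A_s=\{a_0,\dots,a_{s-1}\}$ and places finitely many column elements into $B$ or into $C$ according to a rule that reacts to the event ``$n$ enters $A$''. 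The design goal is that the $D$-pattern of column $n$ (which of its elements a separator $D$ must contain and which it must omit) both determines and is determined by the fate of $n$ with respect to $A$.

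Granting such a construction, three of the required properties are routine. That $B$ and $C$ are RE and disjoint is immediate, since each element is enumerated into at most one of them at the stage where it is acted upon. For $B,C\,{\sf \leq_T}\,A$, I use that with oracle $A$ one can decide, for each $n$, whether $n\in A$ and, if so, locate its entry stage $s_n$ (by running the fixed enumeration, which halts on $n$ exactly when $n\in A$); replaying the construction up to stage $s_n$ then recovers the exact pattern of column $n$, so membership of any $\langle n,k\rangle$ in $B$ or in $C$ is decided from $A$. Finally, the recursive inseparability of $(B,C)$ will follow as a special case of the coding property below: a recursive set separating $B$ and $C$ would be an RE separator of recursive degree, whence $A\,{\sf \leq_T}\,\mathbf 0$, contradicting the non-recursiveness of $A$.

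The heart of the argument is the coding property: every RE set $D$ with $B\subseteq D$ and $D\cap C=\emptyset$ satisfies $A\,{\sf \leq_T}\,D$. Here the separator constraints $B\subseteq D$ and $C\subseteq\overline D$ force $D$'s values on the elements the construction has committed, and the task is to turn this into a genuine two-sided (Turing) reduction. One direction is essentially free: $n\in A$ is semi-decidable from $D$, either by waiting for $n$ to appear in the enumeration of $A$, or by searching column $n$ for an element the oracle reports to be outside $D$ (such an element is necessarily one the construction placed into $C$). The delicate direction is to semi-decide $n\notin A$ from $D$, that is, to make $\overline A$ also $D$-r.e., so that the two half-procedures together decide $A$ with oracle $D$.

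I expect this last point to be the main obstacle, and it is exactly where the column rule must be engineered with care. A naive rule that, for $n\notin A$, leaves column $n$ monochromatic forever makes non-membership a $\Pi^0_1(D)$ statement rather than a $D$-r.e.\ one, so it yields only that $A$ is $D$-r.e., not that $A\,{\sf \leq_T}\,D$. The construction must therefore arrange that, for $n\notin A$, the separator constraints eventually force a \emph{finitely observable} witness through the oracle $D$ as well, symmetrically to the membership case. Once the rule is chosen so that both ``$n\in A$'' and ``$n\notin A$'' are certified by halting searches querying $D$, the two searches run in parallel give the reduction $A\,{\sf \leq_T}\,D$, and the recursive inseparability of $(B,C)$ drops out as noted. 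The remaining bookkeeping --- that $B$, $C$ and the reduction are obtained uniformly from an index for $A$, giving the ``effectively find'' clause --- is then straightforward, since every step of the construction and of the reduction is uniform in that index.
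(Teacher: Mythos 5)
The paper does not actually prove this statement: it is imported verbatim as Theorem~1 of Shoenfield's 1958 paper \emph{Degrees of formal systems}, so there is no in-paper argument to compare yours against. Judged on its own terms, your proposal has a genuine gap: you never define the pair $(B,C)$. Everything you write before the last paragraph is scaffolding that would apply to any column-based construction (disjointness, recursive enumerability, and $B,C\leq_T A$ via replaying the enumeration are indeed routine \emph{once a rule is fixed}), and the last paragraph then says explicitly that the rule ``must be engineered'' so that, for $n\notin A$, a separator $D$ is forced to exhibit a finitely observable witness --- but you do not engineer it. That requirement is the entire content of the theorem. A naive rule (e.g.\ $\langle n,s\rangle\in B$ iff $n$ enters $A$ at stage $s$, and $\langle n,s\rangle\in C$ iff $n$ enters at some stage $>s$) gives all the easy properties you list, yet fails exactly at the point you flag: for $n\notin A$ the column is unconstrained, so the $D$-oracle search for a column witness need not terminate, and one obtains only that $A$ is $D$-r.e., not $A\leq_T D$. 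Declaring that the rule will be ``chosen so that both searches halt'' is assuming the conclusion.

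Concretely, what is missing is the mechanism by which \emph{every} separator is forced to reveal a bound on the settling time of $A$ below each $n$ --- this is where Shoenfield's actual argument does its work, and it cannot be done column-by-column in the symmetric way you describe, precisely because the construction, being r.e., can never act on the event ``$n\notin A$''. Any correct proof must instead extract from $D$ (using only the committed values $B\subseteq D$, $C\subseteq\overline D$) a total $D$-recursive function dominating the enumeration of $A$, from which $A\leq_T D$ follows. Until you specify a rule for which that extraction provably succeeds, and verify it, the proposal is a statement of the proof obligations rather than a proof. I would also note that ``recursive inseparability follows from the coding property'' is fine as stated, but it too is vacuous until the coding property is actually established.
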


\begin{theorem}\label{thm on TP original}
For any non-recursive RE Turing degree $\mathbf{d}$, we can effectively find a consistent RE theory $T_{\mathbf{d}}$ such that $T_{\mathbf{d}}$ does not interpret $\mathbf{R}$, $T_{\mathbf{d}}$ is {\sf RI}, {\sf TP}, and has Turing degree $\mathbf{d}$.
\end{theorem}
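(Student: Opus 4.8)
The plan is to realize $T_{\mathbf{d}}$ as an instance of the Janiczak-based construction of Theorem \ref{EI not R}, choosing the disjoint pair $(B,C)$ via Shoenfield's theorem so that the provability set of the resulting theory has exactly degree $\mathbf{d}$ and is Turing persistent.

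First I would fix a non-recursive RE set $A$ with $\deg(A) = \mathbf{d}$ and apply Theorem \ref{Shoenfield first} to obtain, effectively from an index of $A$, an {\sf RI} pair $(B,C)$ of RE sets with $B, C \leq_T A$ and with the crucial separator property: every RE set $D$ separating $B$ and $C$ satisfies $A \leq_T D$. I then set $T_{\mathbf{d}} := {\sf J} + \{A_n : n \in B\} + \{\neg A_n : n \in C\}$, which is a consistent RE theory since $(B,C)$ is disjoint and the $A_n$ are mutually independent over {\sf J}. Two of the required properties are then immediate from Theorem \ref{EI not R}: part (1) gives that $T_{\mathbf{d}}$ does not interpret $\mathbf{R}$, and part (2), together with the fact that $(B,C)$ is {\sf RI}, gives that $T_{\mathbf{d}}$ is {\sf RI}; in particular $(T_{\mathbf{d}})_P$ is not recursive, so $T_{\mathbf{d}}$ is undecidable.

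The computational heart of the argument is a two-sided degree estimate. For the upper bound $(T_{\mathbf{d}})_P \leq_T A$, I would use Theorem \ref{J thm}: given a sentence $\phi$, effectively compute a Boolean combination $\beta$ of $A_{n_1}, \ldots, A_{n_k}$ equivalent to $\phi$ over {\sf J}. Because the $A_n$ are mutually independent over {\sf J}, every truth assignment to them is realized in a model, so $T_{\mathbf{d}} \vdash \phi$ iff $\beta$ comes out true under every assignment that sends $A_{n_i}$ to true when $n_i \in B$ and to false when $n_i \in C$ (and is free on the remaining indices). This is a finite propositional check once the finitely many membership questions ``$n_i \in B$?'' and ``$n_i \in C$?'' are answered, and these are decidable from $A$ since $B, C \leq_T A$; hence $(T_{\mathbf{d}})_P \leq_T A$. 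For the lower bound, let $f : n \mapsto \ulcorner A_n \urcorner$ and put $D := f^{-1}[(T_{\mathbf{d}})_P] = \{ n : T_{\mathbf{d}} \vdash A_n\}$. Then $D$ is RE with $B \subseteq D$ (from the axioms $A_n$, $n\in B$) and $D \cap C = \emptyset$ (by consistency, since $\neg A_n$ is an axiom for $n \in C$), so $D$ separates $(B,C)$; the separator property yields $A \leq_T D \leq_T (T_{\mathbf{d}})_P$. Combining the two estimates, $(T_{\mathbf{d}})_P$ has degree exactly $\mathbf{d}$.

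Finally, for {\sf TP}, let $S$ be any consistent RE extension of $T_{\mathbf{d}}$ and set $D_S := f^{-1}[S_P] = \{n : S \vdash A_n\}$. Exactly as above, $D_S$ is an RE separator of $(B,C)$ (using $B \subseteq D_S$ from the inherited axioms and $D_S \cap C = \emptyset$ from the consistency of $S$), so $A \leq_T D_S \leq_T S$. Since $(T_{\mathbf{d}})_P \leq_T A$ from the upper bound, we get $T_{\mathbf{d}} \leq_T A \leq_T S$, which is exactly Turing persistence (recall $T_{\mathbf{d}}$ is already known to be undecidable). The step I expect to require the most care is the upper bound $(T_{\mathbf{d}})_P \leq_T A$: one must verify that the partial truth assignment determined by $(B,C)$ genuinely captures provability in $T_{\mathbf{d}}$, i.e., that mutual independence of the $A_n$ over {\sf J} reduces $T_{\mathbf{d}} \vdash \beta$ to a propositional validity under the fixed/free split, using only finitely many queries to $B$ and $C$. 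The lower-bound and persistence arguments are then uniform applications of Shoenfield's separator property to the separators extracted from $(T_{\mathbf{d}})_P$ and $S_P$.
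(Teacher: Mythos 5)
Your proposal is correct and follows essentially the same route as the paper: the same Janiczak-based theory ${\sf J} + \{A_n : n \in B\} + \{\neg A_n : n \in C\}$ with $(B,C)$ from Shoenfield's theorem, the upper bound via the effective reduction of sentences to Boolean combinations of the $A_n$, and the lower bound and persistence via applying the separator property to the RE separators extracted from $(T_{\mathbf{d}})_P$ and $S_P$. Your use of $D := \{n : T_{\mathbf{d}} \vdash A_n\}$ for the lower bound is in fact slightly cleaner than the paper's appeal to $B$ itself being recursive in $T_{\mathbf{d}}$, but it is the same argument in substance.
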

\begin{proof}\label{}
Suppose $A$ is a non-recursive RE set with Turing degree $\mathbf{d}$. From Theorem \ref{Shoenfield first},  we can effectively find an {\sf RI} pair $(B,C)$ such  that $B,C {\sf \leq_T} A$ and for any RE set $D$ separating $B$ and $C$, we have $A{\sf \leq_T} D$.
Define the theory $T_{\mathbf{d}}:= {\sf J} + \{A_n: n\in B\} +\{\neg A_n: n\in C\}$.
\begin{enumerate}[(1)]
  \item Since $T_{\mathbf{d}}$ has a decidable sub-theory {\sf J}, by Theorem \ref{R-like imply HU}, $T_{\mathbf{d}}$ does not interpret $\mathbf{R}$.
  \item From Theorem \ref{EI not R}(2), $T_{\mathbf{d}}$ is {\sf RI} since $(B,C)$ is {\sf RI}.
  \item Note that $B$ is recursive in $T_{\mathbf{d}}$. Since $B$ is an RE set  separating $B$ and $C$, we have $A {\sf \leq_T} B$. Thus, $A$ is recursive in $T_{\mathbf{d}}$.
From Theorem \ref{J thm}(2), $T_{\mathbf{d}}$ is recursive in $B,C$. Since $B,C {\sf \leq_T} A$, the theory $T_{\mathbf{d}}$ is recursive in $A$. Thus, $T_{\mathbf{d}}$ has Turing degree $\mathbf{d}$.
  \item
Let $S$ be a consistent RE extension of $T_{\mathbf{d}}$. We show that $T_{\mathbf{d}} {\sf \leq_T} S$. Define $X=\{n: S\vdash A_n\}$. Note that $B\subseteq X$ and $X\cap C=\emptyset$. From Theorem \ref{Shoenfield first}, we have $A {\sf \leq_T} X$. Since $X {\sf \leq_T} S$ and $A$ has the same Turing degree as $T_{\mathbf{d}}$,  we have $T_{\mathbf{d}} {\sf \leq_T} S$. Thus, $T_{\mathbf{d}}$ is {\sf TP}.
\end{enumerate}
\end{proof}

Now, we show that {\sf EHU}  does not imply interpreting $\mathbf{R}$.

\begin{definition}\label{}
We say that two RE theories $S$ and $T$ are \emph{Boolean recursively isomorphic}  if there is a bijective recursive function $\Phi$ from  sentences in the language of $S$ to sentences in the language of $T$ such that:
\begin{enumerate}[(1)]
  \item $\Phi$ commutes with the boolean connectives, i.e., $\Phi(\neg\phi) = \neg \Phi(\phi)$ and $\Phi(\phi\rightarrow\psi) =\Phi(\phi)\rightarrow \Phi(\psi)$;
  \item  $S \vdash\phi \Leftrightarrow T\vdash \Phi(\phi)$.
\end{enumerate}
We say $\Phi$ is a Boolean recursive isomorphism between $S$ and $T$.
\end{definition}
\smallskip

\begin{theorem}[\cite{FAT}, Theorem 7.1.3]~\label{RBM key thm}
Suppose $T$ is an RE theory with index $i$. Then, there
is a finitely axiomatised theory $S := pere(i)$ such that there is a Boolean recursive isomorphism $\Phi$ between $T$ and $S$.
\end{theorem}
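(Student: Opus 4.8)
The plan is to build the finitely axiomatised theory $S = pere(i)$ and the isomorphism $\Phi$ hand in hand, and to reduce the construction of $\Phi$ to a single recursive bijection on \emph{prime} sentences. Since $\Phi$ is required to commute with $\neg$ and $\to$, and every sentence parses uniquely as a $\{\neg,\to\}$-combination of prime sentences (those not of the form $\neg\theta$ or $\theta_1\to\theta_2$, i.e. atomic or quantified sentences), any such $\Phi$ is completely determined by a bijection $\phi_0$ between the prime sentences of $L(T)$ and those of $L(S)$, extended homomorphically over the connectives. Conversely, any recursive bijection $\phi_0$ of the (recursive) sets of prime sentences yields a recursive bijection $\Phi$ commuting with the connectives. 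So the whole problem collapses to: produce a \emph{finitely axiomatised} $S$ together with a recursive bijection $\phi_0$ of prime sentences whose homomorphic extension carries the RE filter $T_P$ exactly onto $S_P$, with recursive inverse.

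To obtain $S$, I would first apply Craig's trick so that $T$ is presented, uniformly in $i$, by a primitive recursive set of axioms, giving effective access to an enumeration of the axioms. I would then fix once and for all a finitely axiomatised \emph{sequential} base theory $U$ (finite signature; e.g. adjunctive set theory or a sequential weak arithmetic) capable of formalising its own syntax, G\"odel coding, and partial satisfaction predicates. Following the Craig--Vaught method of finite axiomatisability with additional predicates, I would expand $U$ by finitely many new symbols and finitely many axioms that internally assert, through a satisfaction predicate, that every coded axiom of $T$ holds; the resulting theory is $S := pere(i)$. The sequentiality of $U$ is what lets a single universally quantified axiom range over the whole (infinite, primitive recursive) list of $T$-axioms, and one checks that $S$ is finitely axiomatised, is consistent exactly when $T$ is, and admits an effective, provability-preserving and provability-reflecting correspondence between $T$-theorems and $S$-theorems.

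Finally I would upgrade this correspondence to the bijection $\phi_0$ by an effective back-and-forth (Cantor / Cantor--Schr\"oder--Bernstein) argument between the two recursively presented Lindenbaum preorders, interleaving pairings so that $\phi_0$ is total, injective, surjective, recursive, and has recursive inverse, while the induced $\Phi$ satisfies $T\vdash\phi \iff S\vdash\Phi(\phi)$. The main obstacle lies precisely here: the provability filters $T_P$ and $S_P$ are only RE, not recursive, so the matching cannot be run by deciding provability. One must interleave the enumerations of the two filters and commit irrevocably to each pairing, using padding of prime sentences to keep the map total and onto, and arranging the bookkeeping so that both $\phi_0$ and $\phi_0^{-1}$ come out recursive. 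A secondary technical point, to be handled in the construction of $S$, is that the axioms of $T$ may have unbounded logical complexity, so the internal satisfaction mechanism must cover every complexity level; this is exactly what the additional predicates of the Craig--Vaught construction are designed to supply.
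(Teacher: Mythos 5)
The paper does not actually prove this statement: it is imported with a citation to Peretyat'kin's monograph, where it is (a weak form of) the main theorem of the whole book, established by a long, purpose-built construction of finitely axiomatisable theories with prescribed Tarski--Lindenbaum algebras. So there is no in-paper proof to compare against, and judged as a free-standing argument your proposal has two genuine gaps.

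First, the Craig--Vaught method with additional predicates only yields a finitely axiomatised $S$ whose $L(T)$-consequences are exactly $T$, i.e.\ conservativity over the old language. A Boolean recursive isomorphism is far stronger: since $\Phi$ commutes with $\to$, applying $T\vdash\phi\Leftrightarrow S\vdash\Phi(\phi)$ to implications and using the deduction theorem gives $T+\phi\vdash\psi\Leftrightarrow S+\Phi(\phi)\vdash\Phi(\psi)$ for all sentences, so the two Lindenbaum algebras must be recursively isomorphic via a sentence-level bijection. The Craig--Vaught theory proves a wealth of new sentences about the satisfaction predicate and the auxiliary apparatus whose provability structure you do not control, and nothing in your construction matches that structure to $T$'s (already for a decidable, non-finitely-axiomatisable $T$ such as Janiczak's {\sf J} it is unclear that the expanded theory has an isomorphic Lindenbaum algebra). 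Second, the back-and-forth step you defer to ``bookkeeping'' is the real content of the theorem, and the recipe you give for it fails: you must commit irrevocably to pairings of prime sentences while the constraints to be respected --- the entire relative-entailment relation, not just membership in the filters $T_P$ and $S_P$ --- are only r.e.; a commitment can later be falsified by a newly enumerated entailment on one side with no counterpart on the other, and padding cannot repair it. This is exactly why Pour-El and Kripke need effective inseparability for their isomorphism theorem (Theorem \ref{EI BRI} in this paper), a hypothesis unavailable for arbitrary r.e.\ $T$, and why Peretyat'kin must build $S$ from scratch so that its Lindenbaum algebra is isomorphic to $T$'s by construction rather than by an after-the-fact matching.
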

\smallskip

\begin{theorem}[\cite{Shoenfield 58}, Theorem 2]~\label{Shoenfield EU}
For any non-recursive  RE Turing degree $\mathbf{d}$, there exists an essentially undecidable theory with the degree $\mathbf{d}$.
\end{theorem}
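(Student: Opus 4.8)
The plan is to recognize that the required witness has already been constructed, so that no fresh construction is needed: the theory $T_{\mathbf{d}}$ produced in Theorem \ref{thm on TP original} does the job. I would first recall that, for a non-recursive RE degree $\mathbf{d}$, Theorem \ref{thm on TP original} delivers a consistent RE theory $T_{\mathbf{d}}$ that is {\sf RI} and has Turing degree $\mathbf{d}$. All that remains is to promote {\sf RI} to {\sf EU}, and this is precisely Theorem \ref{relation about EI}(3), which asserts that every {\sf RI} theory is {\sf EU}. Combining the two, $T_{\mathbf{d}}$ is a consistent RE {\sf EU} theory of Turing degree $\mathbf{d}$, exactly as the statement requires.

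If one prefers a self-contained argument rather than quoting Theorem \ref{thm on TP original} wholesale, I would rebuild the witness directly from Shoenfield's pair theorem. Fix a non-recursive RE set $A$ of degree $\mathbf{d}$ and apply Theorem \ref{Shoenfield first} to obtain an {\sf RI} pair $(B,C)$ with $B,C {\sf \leq_T} A$ and with the property that every RE set separating $B$ and $C$ computes $A$. Put $T := {\sf J} + \{A_n : n\in B\} + \{\neg A_n : n\in C\}$ as in Theorem \ref{EI not R}. By Theorem \ref{EI not R}(2) the theory $T$ inherits {\sf RI} from $(B,C)$, and Theorem \ref{relation about EI}(3) then yields {\sf EU}. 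The degree computation repeats the one in the proof of Theorem \ref{thm on TP original}: using the effective elimination of Theorem \ref{J thm}(2), $T$ is decidable relative to $B,C$, so $T {\sf \leq_T} A$; conversely $X := \{n : T\vdash A_n\}$ is an RE set separating $(B,C)$, since it contains $B$ and is disjoint from $C$ by consistency, so Theorem \ref{Shoenfield first} gives $A {\sf \leq_T} X {\sf \leq_T} T$. Hence $T$ has degree exactly $\mathbf{d}$.

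The entire theorem is thus carried by two prior facts: the implication {\sf RI} $\Rightarrow$ {\sf EU}, and the availability of an {\sf RI} pair inside an arbitrary non-recursive RE degree. The one step that genuinely requires care — and the only place I expect to spend effort when arguing from scratch rather than quoting Theorem \ref{thm on TP original} — is the lower bound $A {\sf \leq_T} T$: one must verify that $X = \{n : T\vdash A_n\}$ is really a separating set for $(B,C)$, so that the reduction guaranteed by Theorem \ref{Shoenfield first} applies. The consistency of $T$, the upper bound $T {\sf \leq_T} A$, and the passage from {\sf RI} to {\sf EU} are all immediate from results already established.
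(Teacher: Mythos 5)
Your argument is correct, but it is worth noting that the paper does not prove this statement at all: it is imported as a black-box citation to Shoenfield's 1958 paper (Theorem 2 there), and is then used as an ingredient in Theorem \ref{EHU degree}. What you have done is show that the statement is actually derivable from machinery the paper itself develops: Theorem \ref{thm on TP original} already produces, for each non-recursive RE degree $\mathbf{d}$, a consistent RE theory $T_{\mathbf{d}}$ that is {\sf RI} and of degree $\mathbf{d}$, and Theorem \ref{relation about EI}(3) upgrades {\sf RI} to {\sf EU}. Your self-contained variant correctly rebuilds $T_{\mathbf{d}}$ from Shoenfield's pair theorem (Theorem \ref{Shoenfield first}) over Janiczak's theory ${\sf J}$, and you rightly identify the one delicate step, namely that $X=\{n: T\vdash A_n\}$ is an RE separating set for $(B,C)$ (it contains $B$ by construction and misses $C$ by consistency of $T$, which in turn rests on the mutual independence of the $A_n$ over ${\sf J}$), so that $A\leq_T X\leq_T T$. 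There is no circularity, since Theorem \ref{thm on TP original} depends only on Shoenfield's Theorem 1, not on the statement being proved. What your route buys is self-containedness: the paper could replace the citation by your two-line derivation. What it costs is nothing mathematically, though historically your argument is essentially a reconstruction of Shoenfield's own derivation of his Theorem 2 from his Theorem 1, transplanted to the Janiczak base theory used elsewhere in the paper.
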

\smallskip

\begin{theorem}[\cite{Tarski}, Chapter I, Theorem 6]~\label{Thm on EHU}
Let $T$ be a consistent RE theory.   If $T$ is finitely axiomatizable and essentially undecidable, then $T$ is essentially hereditarily undecidable.
\end{theorem}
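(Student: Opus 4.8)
The plan is to reduce to the characterization of {\sf EHU} furnished by Theorem \ref{EHU CT}. By the equivalence of (1) and (3) there, it suffices to show that every RE theory $S$ over the language of $T$ that is consistent with $T$ is undecidable. The leverage provided by finite axiomatizability is exactly what lets this reduction go through via the deduction theorem, so I would set things up to exploit that.

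Concretely, let $\tau$ denote the conjunction of the finitely many axioms of $T$, so that $T$ and $\{\tau\}$ prove the same $L(T)$-sentences. Suppose, toward a contradiction, that some RE theory $S$ over $L(T)$ is consistent with $T$ yet decidable. I would then form $U$, the deductive closure of $S\cup\{\tau\}$. Since $S$ is consistent with $T$ and $T$ is axiomatized by $\tau$, the set $S\cup\{\tau\}$ is consistent, so $U$ is a consistent theory; and since $\tau\vdash T$, we have $T\subseteq U$. Hence $U$ is a consistent RE extension of $T$ over the same language $L(T)$.

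The key step is to observe that $U$ is in fact decidable. By the deduction theorem, for every $L(T)$-sentence $\phi$ we have $U\vdash\phi$ if and only if $S\vdash\tau\rightarrow\phi$. The map $\phi\mapsto(\tau\rightarrow\phi)$ is recursive, and $S$ is decidable by assumption, so membership in $U$ is decidable. But then $U$ is a consistent, decidable RE extension of $T$ over the same language, contradicting the essential undecidability of $T$. Hence no such $S$ exists; every RE theory over $L(T)$ consistent with $T$ is undecidable, and therefore $T$ is {\sf EHU} by Theorem \ref{EHU CT}.

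The only point that genuinely uses a hypothesis, and hence the crux of the argument, is the application of the deduction theorem: finite axiomatizability is precisely what packages all of $T$ into the single sentence $\tau$, so that provability in $U$ collapses to provability in $S$ of one implication computable from $\phi$. Without a single axiom $\tau$ this reduction would fail, so I would flag this as the main obstacle to watch, whereas the consistency bookkeeping for $U$ is routine.
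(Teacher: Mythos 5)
Your proof is correct. The paper does not actually prove this statement---it is quoted from Tarski--Mostowski--Robinson with only a citation---but your argument (package $T$ into a single axiom $\tau$, pass from a decidable $S$ consistent with $T$ to the decidable consistent RE extension generated by $S\cup\{\tau\}$ via the deduction theorem, and contradict essential undecidability) is exactly the classical proof of \cite[Chapter I, Theorem 6]{Tarski}, and your reduction through the equivalence $(1)\Leftrightarrow(3)$ of Theorem \ref{EHU CT} is sound since only the direction $(3)\Rightarrow(1)$ is needed and that theorem is established independently.
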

\smallskip

\begin{theorem}[\cite{Hanf}, Theorem 3.3]~\label{EHU degree}
Let $\mathbf{d}$ be any non-recursive RE Turing degree. Then there is a
finitely axiomatised essentially hereditarily undecidable theory with Turing degree $\mathbf{d}$.
\end{theorem}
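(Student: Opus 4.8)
The plan is to produce the required theory by chaining the three preceding results: Shoenfield's construction of an essentially undecidable theory of prescribed degree (Theorem \ref{Shoenfield EU}), the passage to a finitely axiomatised theory through a Boolean recursive isomorphism (Theorem \ref{RBM key thm}), and the upgrade from essential undecidability to essential hereditary undecidability for finitely axiomatised theories (Theorem \ref{Thm on EHU}). The finitely axiomatised theory delivered by Theorem \ref{RBM key thm} will be the witness, once we check that it retains the degree and the essential undecidability of its Shoenfield source.

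First I would fix the non-recursive RE degree $\mathbf{d}$ and apply Theorem \ref{Shoenfield EU} to obtain a consistent RE theory $T$ that is essentially undecidable and has Turing degree $\mathbf{d}$. Letting $i$ be an RE index for $T$, Theorem \ref{RBM key thm} supplies a finitely axiomatised theory $S$ together with a Boolean recursive isomorphism $\Phi$; concretely, $\Phi$ is a recursive bijection between the sentences of $T$ and those of $S$, it commutes with the Boolean connectives, and $T \vdash \phi \iff S \vdash \Phi(\phi)$ (the argument is symmetric in the two directions). I would then verify the two inherited properties. The Turing degree is immediate: from $\phi \in T_P \iff \Phi(\phi) \in S_P$ and the bijectivity of the recursive map $\Phi$, the sets $T_P$ and $S_P$ are recursively isomorphic, so $S$ has the same Turing degree as $T$, namely $\mathbf{d}$.

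For essential undecidability, let $U$ be an arbitrary consistent RE extension of $S$ over the language of $S$, written $U = S + \Delta$ for an RE set $\Delta$ of $S$-sentences, and set $U' := T + \Phi^{-1}[\Delta]$, an RE theory over the language of $T$. The key claim is that $\Phi$ transfers deductions between these extensions: $U' \vdash \phi \iff U \vdash \Phi(\phi)$ for every $T$-sentence $\phi$. Granting this, taking $\phi$ to be a refutable sentence shows $U'$ is consistent iff $U$ is, and the equivalence exhibits $U'_P$ and $U_P$ as recursively isomorphic via $\Phi$, so $U'$ is decidable iff $U$ is. Since $T$ is essentially undecidable and $U'$ is a consistent RE extension of $T$ over its own language, $U'$ is undecidable, whence $U$ is undecidable. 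Thus $S$ is essentially undecidable. Finally, $S$ is finitely axiomatised and essentially undecidable, so Theorem \ref{Thm on EHU} gives that $S$ is essentially hereditarily undecidable, and $S$ is the desired finitely axiomatised {\sf EHU} theory of degree $\mathbf{d}$.

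The main obstacle is the transfer claim $U' \vdash \phi \iff U \vdash \Phi(\phi)$, i.e.\ that the Boolean recursive isomorphism genuinely carries deductions from $U = S + \Delta$ to $U' = T + \Phi^{-1}[\Delta]$ and not merely between the base theories. Commutation with the Boolean connectives handles propositional inferences and \emph{modus ponens} transparently (if $\Phi(\phi\rightarrow\psi)=\Phi(\phi)\rightarrow\Phi(\psi)$, a detachment step upstairs matches one downstairs), and property (2) of the isomorphism accounts for the base-theory content; what must be ensured is that all remaining first-order deductive content (quantifier and equality reasoning) is already absorbed into the finite axiomatisation produced by the construction underlying Theorem \ref{RBM key thm}. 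This is precisely the structural feature that makes that construction work, and it is where the full strength of Theorem \ref{RBM key thm} is used rather than the bare commutation and provability-matching clauses.
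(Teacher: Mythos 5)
Your proof is correct and follows essentially the same route as the paper: Theorem \ref{Shoenfield EU} to get an {\sf EU} theory of degree $\mathbf{d}$, Theorem \ref{RBM key thm} to pass to a finitely axiomatised Boolean recursively isomorphic theory, and Theorem \ref{Thm on EHU} to upgrade to {\sf EHU}, with the paper merely asserting (where you verify) that Boolean recursive isomorphisms preserve essential undecidability and Turing degree. Your closing worry about the transfer claim is unnecessary: by the deduction theorem, $U'\vdash\phi$ iff $T\vdash\bigwedge_i\delta_i\rightarrow\phi$ for some $\delta_i\in\Phi^{-1}[\Delta]$, and applying $\Phi$ (which commutes with Boolean connectives and matches provability over the base theories) turns this into exactly $U\vdash\Phi(\phi)$, so the claim follows from the two stated clauses alone.
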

\begin{proof}\label{}
From Theorem \ref{Shoenfield EU}, there exists an {\sf EU} theory $T$ with Turing degree $\mathbf{d}$. Suppose $T$ has index $i$. From Theorem \ref{RBM key thm}, there
is a finitely axiomatised theory $S := pere(i)$ such that $T$ is Boolean recursively
isomorphic to $S$. Since Boolean recursive  isomorphisms preserve essential undecidability, $S$ is finitely axiomatizable and essentially undecidable. Thus, $S$ is {\sf EHU} by Theorem \ref{Thm on EHU}. Since Boolean recursive  isomorphisms preserve Turing degrees,  $S$ has Turing degree $\mathbf{d}$.
\end{proof}

Now, we show that ${\sf REW}$  does not imply interpreting $\mathbf{R}$.

\begin{theorem}\label{REW not imply R-like}
The property ${\sf REW}$ does not imply {\sf HU}.
\end{theorem}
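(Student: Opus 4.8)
The plan is to imitate Putnam's construction of Example \ref{Putnam E}, but over a language that contains $\mathbf{0}$ and $\mathbf{S}$, so that weak representability (which is defined via the numerals $\overline{n}=\mathbf{S}^n\mathbf{0}$) even makes sense. Concretely, I would work in the language $L=\{\mathbf{0},\mathbf{S}\}\cup\{P_i:i\in\mathbb{N}\}$, where each $P_i$ is a unary predicate symbol, and let $T$ be the theory axiomatised by the atomic sentences
\[\{\,P_i(\overline{n}) : i\in\mathbb{N},\ n\in W_i\,\}.\]
These are just the analogues of Putnam's axioms $A_1$; since we only want {\sf REW} and not essential undecidability, the analogues of Putnam's axioms $A_2$ are dropped. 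Because $\{(i,n):n\in W_i\}$ is RE and $(i,n)\mapsto\ulcorner P_i(\overline n)\urcorner$ is recursive, $T$ is an RE theory.

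First I would verify consistency and {\sf REW} simultaneously, using the obvious model. Let $\mathcal{M}$ be the $L$-structure with domain $\mathbb{N}$, with $\mathbf{0}$ and $\mathbf{S}$ interpreted standardly and with $P_i^{\mathcal{M}}=W_i$ for each $i$. Then every axiom holds in $\mathcal{M}$, so $\mathcal{M}\models T$ and $T$ is consistent. Moreover, for each $i$ the formula $P_i(x)$ weakly represents $W_i$: if $n\in W_i$ then $P_i(\overline n)$ is an axiom, so $T\vdash P_i(\overline n)$; and conversely if $T\vdash P_i(\overline n)$ then $\mathcal{M}\models P_i(\overline n)$, i.e.\ $n\in W_i$. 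Since every RE set equals $W_i$ for some $i$, this shows $T$ is {\sf REW}.

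Next I would show that $T$ is not {\sf HU} by exhibiting a decidable subtheory over $L$. The natural candidate is the empty theory, i.e.\ pure first-order logic over $L$, whose theorems are exactly the valid $L$-sentences; this is a subtheory of $T$ because logical validities are provable in every theory. This mirrors the role of Putnam's decidable ${\sf D}$ in Example \ref{Putnam E}, which is itself just the underlying (monadic) logic. So it remains to argue that validity in $L$ is decidable. Note that validity of a given sentence depends only on the finitely many symbols occurring in it, so this reduces to decidability of validity over each finite sub-language of $L$, uniformly.

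This last point is the main obstacle, and it is precisely where the situation differs from Putnam's: he uses infinitely many individual constants, for which the ambient logic is plain monadic logic and decidability is classical, whereas here the numerals force the unary function symbol $\mathbf{S}$ into $L$. The fact I would need is that first-order logic over a language with one unary function symbol, one constant, and finitely many unary predicate symbols is decidable. I would justify this by appealing to the classical decidability of the monadic first-order theory of a single unary function, or equivalently by reducing satisfiability of an $L$-sentence to a monadic second-order question about coloured ``functional'' (successor-type) structures, which unravel into forests and are handled by the B\"{u}chi/Rabin tree-automata theorems; the point is that a single unary function produces only a forest and never a grid, so the usual route to undecidability via two functions or a binary relation is unavailable. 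Granting this, pure logic over $L$ is decidable, so $T$ possesses a decidable subtheory over its own language and is therefore not {\sf HU}. This completes the argument that {\sf REW} does not imply {\sf HU}.
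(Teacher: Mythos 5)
Your argument is correct, but it takes a noticeably different route from the paper's. The paper simply reuses Putnam's theory ${\sf E}$ of Example \ref{Putnam E}: the individual constants $\overline{1},\overline{2},\dots$ play the role of numerals, the decidable subtheory is Putnam's ${\sf D}$ (pure monadic quantification theory, whose decidability is the elementary L\"owenheim result), and the converse direction of weak representability (${\sf E}\vdash P_i(\overline{n})\Rightarrow n\in W_i$) is proved \emph{syntactically}: taking $W_j=\{n\}$, the axioms $A_2$ together with Fact \ref{special rf} yield ${\sf E}\vdash \neg P_j(\overline{n})$ while $A_1$ yields ${\sf E}\vdash P_j(\overline{n})$, contradicting consistency. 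You instead build genuine numerals $\mathbf{S}^n\mathbf{0}$ into the language (which is more faithful to Definition \ref{property of theory} than the paper's use of primitive constants), drop the $A_2$ axioms altogether, and obtain the converse direction \emph{semantically} from the standard model with $P_i^{\mathcal{M}}=W_i$ -- a genuinely simpler soundness argument that needs neither $A_2$ nor the function $e(i,j)$. The price is paid at the decidability step: once $\mathbf{S}$ is in the language, the minimal subtheory is predicate logic over a vocabulary with one unary function symbol, one constant, and unary predicates, so you must invoke the decidability of the Rabin class $[\mathrm{all},(\omega),(1)]_=$ (monadic first-order logic with a single unary function, Rabin 1969, via tree automata), a far heavier classical theorem than the decidability of plain monadic logic that the paper needs. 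Your justification of that fact is only sketched (the ``forests, not grids'' heuristic), so to make the proof airtight you should cite the precise classification result rather than gesture at it; with that citation in place the argument is complete. One further trade-off worth noting: the paper's choice of ${\sf E}$ lets the same theory be reused for several other separations ({\sf Rosser}, {\sf RSS}, not {\sf RFD}), whereas your theory is tailored to this single statement.
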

\begin{proof}\label{}
We first show that Putnam's theory ${\sf E}$ in Example \ref{Putnam E} is {\sf REW}. Let $A=W_i$ be an RE set. From Example \ref{Putnam E}, ${\sf E}\vdash P_i(\overline{n})$ if $n\in A$. We show that if ${\sf E}\vdash P_i(\overline{n})$, then $n\in A$. Suppose ${\sf E}\vdash P_i(\overline{n})$, but $n\notin A$.  Suppose $\{n\}=W_j$. Take the recursive function $e(i,j)$ as in Fact \ref{special rf}.
Since $W_i\cap W_j=\emptyset$, by Fact \ref{special rf}, $e(i,j)=i$ and $e(j,i)=j$. From Example \ref{Putnam E}, we have ${\sf E}\vdash P_i(\overline{n})\rightarrow \neg P_j(\overline{n})$. Thus, ${\sf E}\vdash \neg P_j(\overline{n})$. Since $n\in W_j$, we have ${\sf E}\vdash P_j(\overline{n})$, which is a contradiction. But ${\sf E}$ is not {\sf HU} since {\sf E} has a decidable sub-theory.
\end{proof}

The following theorem is an important tool for examining the relationships among the  properties  in Remark \ref{convention}.\smallskip

\begin{theorem}[\cite{Shoenfield61}, pp.172-173]~\label{Shoefield theory}
There is a theory $T$ in which any recursive function is definable but $T$ is not {\sf Creative}, and no non-recursive set is weakly representable. In fact, $T$ has Turing degree \emph{less than}  $\mathbf{0}^{\prime}$.
\end{theorem}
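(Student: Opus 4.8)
The plan is to reduce the four assertions to two genuine construction tasks, using implications already established, and then to build the theory by recursion-theoretic means. Since the theory $T$ will be shown to be {\sf RFD}, Lemma \ref{RFD imply RSS} gives that $T$ is {\sf RSS} and {\sf RSW}, so every recursive set is (strongly, hence weakly) representable in $T$; thus the clause ``no non-recursive set is weakly representable'' is equivalent to saying that the weakly representable sets are \emph{exactly} the recursive sets. Moreover, the non-creativity of $T$ need not be argued directly: by Theorem \ref{relation about EI}(5) every {\sf Creative} theory has Turing degree $\mathbf{0}^{\prime}$, so once $T$ is shown to have degree strictly below $\mathbf{0}^{\prime}$ it is automatically not {\sf Creative}. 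Hence it suffices to produce a consistent RE theory $T$ that (a) defines every recursive function, (b) has Turing degree $<\mathbf{0}^{\prime}$, and (c) weakly represents no non-recursive set.

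For the construction I would work in the language of arithmetic and keep $T$ \emph{sound}, i.e.\ admit only sentences true in $\mathfrak{N}$ as axioms; this makes consistency automatic. The axioms are chosen so as to force, for each recursive function $f$, the numeralwise-definability conditions $T\vdash \forall y[\gamma_f(\overline{a},y)\leftrightarrow y=\overline{f(a)}]$ for a $\Sigma^0_1$ graph formula $\gamma_f$ of $f$, which yields (a). The delicate point is that one must \emph{not} let $T$ interpret $\mathbf{R}$: by Proposition \ref{coro of R-like} any theory interpreting $\mathbf{R}$ is {\sf EI}, hence {\sf Creative} by Theorem \ref{relation about EI}(4), hence of degree $\mathbf{0}^{\prime}$ by Theorem \ref{relation about EI}(5). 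So the mechanism producing definability must be strictly weaker than the $\Sigma^0_1$-completeness of $\mathbf{R}$ recorded in Fact \ref{key fact about R}. Concretely, I would fix in advance a non-recursive incomplete RE set $A$ of intermediate degree (so $\mathbf{0}<\deg(A)<\mathbf{0}^{\prime}$, which exists by the Friedberg--Muchnik theorem) and arrange the enumeration of axioms so that $T_P$ is Turing-equivalent to $A$; this gives (b). Since strong representability of all recursive sets precludes any decidable consistent extension, $T$ is in fact essentially undecidable, so its degree is genuinely intermediate.

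For (c) I would show that for every formula $\phi(x)$ the instance set $\{n: T\vdash\phi(\overline{n})\}$ is recursive. The idea is to prove an elimination/normal-form result for $T$-provability — in the spirit of the analysis of {\sf J} in Theorem \ref{J thm} — reducing the question ``$T\vdash\phi(\overline{n})$'' to one decidable relative to $A$ plus finite bookkeeping, and then to argue that on each \emph{fixed} formula $\phi$ the $A$-dependence collapses, leaving a recursive instance set. Combined with {\sf RSW} this shows that the weakly representable sets are precisely the recursive ones, giving (c); together with (a), (b) and the reduction above, all four conclusions of the theorem follow.

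The step I expect to be the main obstacle is reconciling (a) with (b) and (c). Defining \emph{all} total recursive functions appears to require $\mathbf{0}^{\prime}$-much information, since recognising totality (equivalently, halting) is $\mathbf{0}^{\prime}$-hard; yet the degree of $T$ must stay strictly below $\mathbf{0}^{\prime}$, and the deductive closure must not inadvertently prove enough to weakly represent a non-recursive RE set (violating (c)) or to become many-one complete (which by Fact \ref{fact on creative}(1) would make $T$ {\sf Creative}). Balancing these positive requirements — numeralwise definability of each recursive function, met using only $A$-much information — against the negative requirements — no non-recursive set weakly representable, and $T_P$ not complete — is precisely the heart of Shoenfield's construction, and verifying that the closure does not ``leak'' past the intended axioms is the part that will require the most care.
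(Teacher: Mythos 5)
The paper gives no proof of this statement at all: it is quoted as a known result of Shoenfield (\cite{Shoenfield61}, pp.~172--173), so there is no internal argument to compare against. Judged on its own terms, your proposal has a genuine gap. The peripheral reductions you make are correct and cheap: since {\sf Creative} implies Turing degree $\mathbf{0}^{\prime}$ (Theorem \ref{relation about EI}(5)), non-creativity follows from the degree claim, and since {\sf RFD} $\Rightarrow$ {\sf RSS} $\Rightarrow$ {\sf RSW}, the clause about weakly representable sets amounts to ``weakly representable $=$ recursive.'' But everything after that is a description of what a construction would have to accomplish, not a construction. The statements ``arrange the enumeration of axioms so that $T_P$ is Turing-equivalent to $A$'' and ``on each fixed formula $\phi$ the $A$-dependence collapses, leaving a recursive instance set'' are precisely the content of Shoenfield's theorem; asserting them is not proving them, and your final paragraph concedes that the reconciliation of (a) with (b) and (c) --- the entire mathematical substance --- is left open.

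There is also a concrete flaw in the one mechanism you do propose. You suggest taking as axioms, ``for each recursive function $f$,'' the numeralwise-definability conditions $\forall y[\gamma_f(\overline{a},y)\leftrightarrow y=\overline{f(a)}]$. The set of indices of total recursive functions is $\Pi^0_2$-complete, so this axiom set cannot be recursively enumerated by quantifying over ``each recursive function''; the theory must be RE by hypothesis. The reason $\mathbf{R}$ achieves {\sf RFD} with a recursive axiom set is $\Sigma^0_1$-completeness plus the order axioms, and you correctly observe that this route is barred (it yields a theory interpreting $\mathbf{R}$, hence {\sf Creative} by Proposition \ref{coro of R-like} and Theorem \ref{relation about EI}). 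But you never supply the replacement mechanism --- in Shoenfield's paper this is a genuine recursion-theoretic construction controlling which instances become provable --- so the proposal does not establish even part (a) for an RE theory, let alone (b) and (c) simultaneously.
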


The following theorem is a summary of the relationships between interpreting $\mathbf{R}$ and the properties  in Remark \ref{convention}.

\begin{theorem}\label{thm on R-like}
None of the properties in Remark \ref{convention} can imply interpreting $\mathbf{R}$: for any   property $P$ in Remark \ref{convention} and  a consistent RE theory $T$,  in general ``$T$ has the property $P$"  does not necessarily imply ``$T$ interprets $\mathbf{R}$".
\end{theorem}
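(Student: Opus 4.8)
The plan is to prove the statement property by property: for each of the twelve properties $P$ in Remark \ref{convention} I would exhibit a consistent RE theory that has $P$ but does not interpret $\mathbf{R}$. Two uniform certificates for ``does not interpret $\mathbf{R}$'' are available from the earlier results. First, if a theory has a decidable subtheory over the same language, then it is not {\sf HU}, so by Theorem \ref{R-like imply HU} it cannot interpret $\mathbf{R}$. Second, since interpreting $\mathbf{R}$ forces Turing degree $\mathbf{0}^{\prime}$ by Proposition \ref{coro of R-like}, any theory whose Turing degree differs from $\mathbf{0}^{\prime}$ cannot interpret $\mathbf{R}$.

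The bulk of the properties are handled at once by Putnam's theory ${\sf E}$ from Example \ref{Putnam E}. Since ${\sf E}$ is {\sf Rosser}, the implication chain in Theorem \ref{relation about EI} shows that ${\sf E}$ is also {\sf EI}, {\sf RI}, {\sf EU}, {\sf Creative} and $\mathbf{0}^{\prime}$; and ${\sf E}$ is {\sf REW} by the proof of Theorem \ref{REW not imply R-like}. Because ${\sf E}$ contains the decidable subtheory ${\sf D}$ over the same language, the first certificate applies and ${\sf E}$ does not interpret $\mathbf{R}$; this is exactly Theorem \ref{Rosser not imply R-like}. Hence none of {\sf Rosser}, {\sf EI}, {\sf RI}, {\sf EU}, {\sf Creative}, $\mathbf{0}^{\prime}$, {\sf REW} implies interpreting $\mathbf{R}$.

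For the remaining five properties I would cite the dedicated constructions. Property {\sf TP} is witnessed by the theory $T_{\mathbf{d}}$ of Theorem \ref{thm on TP original}, which is {\sf TP} and does not interpret $\mathbf{R}$. Properties {\sf RFD}, {\sf RSS} and {\sf RSW} are witnessed simultaneously by Shoenfield's theory of Theorem \ref{Shoefield theory}: it is {\sf RFD}, hence {\sf RSS} and {\sf RSW} by Lemma \ref{RFD imply RSS}, and since it has Turing degree strictly below $\mathbf{0}^{\prime}$ the second certificate shows it does not interpret $\mathbf{R}$. For {\sf EHU} I would invoke Theorem \ref{EHU degree}: fixing a non-recursive RE degree $\mathbf{d}\neq\mathbf{0}^{\prime}$ (one exists by the Friedberg--Muchnik theorem), this yields a finitely axiomatised {\sf EHU} theory of degree $\mathbf{d}$, which by the second certificate does not interpret $\mathbf{R}$.

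The one case that needs care, and that I expect to be the only genuine obstacle, is {\sf EHU}. Here the decidable-subtheory certificate is unavailable, because an {\sf EHU} theory is in particular {\sf HU}, so every subtheory of it over the same language is undecidable. One is therefore forced to use the degree certificate, which requires both the existence of an intermediate RE degree and the contrapositive of the $\mathbf{0}^{\prime}$ clause of Proposition \ref{coro of R-like}. Once this case is set up, the theorem follows by assembling the witnesses above.
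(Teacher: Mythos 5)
Your proposal is correct and follows essentially the same route as the paper: a case-by-case verification using the same two certificates for non-interpretability of $\mathbf{R}$ (a decidable subtheory via Theorem \ref{R-like imply HU}, or a Turing degree other than $\mathbf{0}^{\prime}$ via Proposition \ref{coro of R-like}), with Putnam's ${\sf E}$, the theory $T_{\mathbf{d}}$ of Theorem \ref{thm on TP original}, Shoenfield's theory, and Theorem \ref{EHU degree} as the witnesses. The only divergence is minor: where you reuse ${\sf E}$ as a single witness for {\sf EI}, {\sf RI}, {\sf EU}, {\sf Creative} and $\mathbf{0}^{\prime}$ (legitimate, since ${\sf E}$ is {\sf Rosser} and Theorem \ref{relation about EI} applies), the paper routes these five cases through the Janiczak-based theory of Theorem \ref{EI not R}; both work, and your choice slightly shortens the dependency chain.
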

\begin{proof}\label{}
\begin{enumerate}[(1)]
  \item From Theorem \ref{Rosser not imply R-like}, {\sf Rosser}  does not imply interpreting $\mathbf{R}$.
\item From Theorem \ref{EI not R}, ${\sf EI}$ does not imply interpreting $\mathbf{R}$.
  \item From (2) and ${\sf EI}\Rightarrow {\sf RI}$, ${\sf RI}$ does not imply interpreting $\mathbf{R}$.
  \item From Theorem \ref{thm on TP original}, ${\sf TP}$ does not imply interpreting $\mathbf{R}$.
      \item From Proposition \ref{coro of R-like}, theories  interpreting $\mathbf{R}$ have  Turing degree $\mathbf{0}^{\prime}$. From Theorem \ref{EHU degree}, {\sf EHU} theories can have any  non-zero RE Turing degree. Thus, {\sf EHU} does not imply interpreting $\mathbf{R}$.
  \item From (2) and ${\sf EI}\Rightarrow {\sf EU}$, ${\sf EU}$ does not imply interpreting $\mathbf{R}$.
  \item From (2) and ${\sf EI} \Rightarrow$ {\sf Creative}, {\sf Creative}   does not imply interpreting $\mathbf{R}$.
  \item From (2) and {\sf EI} implies $\mathbf{0}^{\prime}$, $\mathbf{0}^{\prime}$   does not imply interpreting $\mathbf{R}$.
   \item From Theorem \ref{REW not imply R-like}, ${\sf REW}$ does not imply {\sf HU}. From Theorem \ref{R-like imply HU}, interpreting $\mathbf{R}$ implies {\sf HU}. Thus, ${\sf REW}$ does not imply interpreting $\mathbf{R}$.
   \item From Theorem \ref{Shoefield theory} and the fact that theories  interpreting $\mathbf{R}$ are {\sf Creative}, none of {\sf RFD}, {\sf RSS} and {\sf RSW} implies  interpreting $\mathbf{R}$.
\end{enumerate}
\end{proof}

\section{Relationships with Rosser theories}

In this section, we discuss the relationships between Rosser theories and the properties in Remark \ref{convention}. We first show that the property {\sf EI} does not imply {\sf RSW}.\smallskip

\begin{definition}[Two theories: {\sf Succ} and ${\sf Succ^{-}}$]~\label{}
\begin{enumerate}[(1)]
  \item The theory of successor, {\sf Succ},  is  defined in   the language $\{\mathbf{0}, \mathbf{S}\}$ consisting of the following axioms:
\begin{description}
  \item[S1] $\forall x \forall y(\mathbf{S}x=\mathbf{S} y\rightarrow x=y)$;
  \item[S2] $\forall x(\mathbf{S} x\neq \mathbf{0})$;
  \item[S3] $\forall x(x\neq \mathbf{0}\rightarrow \exists y (x=\mathbf{S} y))$;
  \item[S4.n]  $\forall x (\mathbf{S}^{n}x \neq x)$ for each $n\in\mathbb{N}$.
\end{description}
  \item Let ${\sf Succ^{-}}$ be the sub-theory of {\sf Succ} over the same language consisting of axioms $\mathbf{S1}, \mathbf{S2}$ and $\mathbf{S3}$.
\end{enumerate}
\end{definition}

\begin{remark}
Any model of {\sf Succ} consists of the standard part (isomorphic to $\mathbb{N}$) plus some number of $\mathbb{Z}$-chains (including the case of no number of $\mathbb{Z}$-chain at all). Any model of ${\sf Succ^{-}}$ consists of the standard part (isomorphic to $\mathbb{N}$), some number  of $\mathbb{Z}$-chains (including the case of no number of $\mathbb{Z}$-chain at all), and some number of cycles with length $n$ for some $n\in \omega$ (including the case of no cycle with length $n$ at all), where we say that a cycle with length $n$ is of the form: $\mathbf{S}x_i=x_{i+1}$ for $0\leq i<n$, and $\mathbf{S}x_n=x_0$.
\end{remark}
\smallskip

\begin{fact}[\cite{Enderton}, p.190]~\label{fact on Succ}
\begin{enumerate}[(1)]
  \item The theory $\sf Succ$ is decidable.
  \item The theory $\sf Succ$ is $\kappa$-categorical for uncountable $\kappa$: all models of $\sf Succ$ with cardinality $\kappa$ are isomorphic.
\end{enumerate}
\end{fact}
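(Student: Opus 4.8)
The plan is to establish part (2) first, since the completeness needed for the decidability in part (1) will follow from it by the \L o\'s--Vaught test. For part (2), I would rely on the structural description of models recorded in the Remark immediately preceding this Fact: every model $M$ of {\sf Succ} decomposes as a standard part isomorphic to $\langle\mathbb{N},\mathbf{0},\mathbf{S}\rangle$ together with some cardinal number $\lambda$ of $\mathbb{Z}$-chains. Axioms $\mathbf{S1}$--$\mathbf{S3}$ force $\mathbf{S}$ to be injective with $\mathbf{0}$ as the unique non-successor, generating the standard copy of $\mathbb{N}$, while the axioms $\mathbf{S4}.n$ rule out all finite cycles, so the remaining elements split into $\mathbb{Z}$-chains on which $\mathbf{S}$ acts as a fixed-point-free bijection. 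Since the standard part is countable and each $\mathbb{Z}$-chain is countable, $|M|=\aleph_0+\lambda\cdot\aleph_0$. For uncountable $\kappa$, the equation $|M|=\kappa$ forces $\lambda=\kappa$, since any finite or countable value of $\lambda$ yields a countable model. Hence every model of {\sf Succ} of cardinality $\kappa$ is the disjoint union of one copy of $\mathbb{N}$ and exactly $\kappa$ copies of $\mathbb{Z}$. Given two such models, I would build an isomorphism by mapping the standard part to the standard part via the unique $\mathbf{S}$-preserving bijection of $\langle\mathbb{N},\mathbf{0},\mathbf{S}\rangle$, and matching the two families of $\kappa$ many $\mathbb{Z}$-chains through any bijection between index sets of size $\kappa$, each individual chain being carried across by an $\mathbf{S}$-preserving bijection $\mathbb{Z}\to\mathbb{Z}$. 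This establishes $\kappa$-categoricity.

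For part (1), I would apply the \L o\'s--Vaught test. The theory {\sf Succ} is consistent, since $\langle\mathbb{N},\mathbf{0},\mathbf{S}\rangle$ is a model; it has no finite models, because the standard part is already infinite; and by part (2) it is $\kappa$-categorical for some infinite $\kappa$ (e.g. $\kappa=\aleph_1$) no smaller than the cardinality $\aleph_0$ of its countable language. The \L o\'s--Vaught test then yields that {\sf Succ} is complete. Finally, {\sf Succ} is recursively axiomatized---the schema $\mathbf{S4}.n$ is a recursive set of axioms---so it is an RE theory, and a complete RE theory is decidable: to decide whether a sentence $\phi$ is a theorem, one enumerates the theorems of {\sf Succ} and waits until $\phi$ or $\neg\phi$ appears, which by completeness must happen for exactly one of the two. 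This gives decidability.

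The only delicate point is the cardinal arithmetic in part (2)---verifying that uncountable cardinality forces precisely $\kappa$ many $\mathbb{Z}$-chains and that no smaller number of chains can produce a model of size $\kappa$---together with the bookkeeping needed to assemble the global isomorphism from the chain-by-chain bijections. Everything else is routine, and since the statement is attributed to Enderton, the argument is essentially the standard textbook one.
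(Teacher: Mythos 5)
Your proposal is correct and is essentially the argument the paper is pointing to: the paper gives no proof of this Fact, citing Enderton (p.\ 190), where exactly this route is taken --- decompose a model into the standard part plus $\mathbb{Z}$-chains, deduce $\kappa$-categoricity for uncountable $\kappa$ by counting chains, conclude completeness via the \L o\'s--Vaught test, and obtain decidability from completeness together with recursive axiomatizability. All the details you flag as delicate (the cardinal arithmetic forcing exactly $\kappa$ chains, and assembling the isomorphism chain by chain) check out.
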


\begin{theorem}\label{thm on Succ}
Let $S$ be a consistent extension of ${\sf Succ^{-}}$ over the same language. Then for any $X\subseteq \mathbb{N}$, $X$ is weakly representable in $S$ iff $X$ is finite or co-finite.
\end{theorem}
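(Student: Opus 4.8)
The plan is to prove the two implications separately. For the easy direction, suppose $X$ is finite or co-finite. First I record the auxiliary fact that ${\sf Succ^{-}}\vdash\overline{m}\neq\overline{n}$ whenever $m\neq n$: writing $\overline{n}=\mathbf{S}^n\mathbf{0}$, this follows from injectivity of $\mathbf{S}$ (axiom $\mathbf{S1}$) together with $\forall x(\mathbf{S}x\neq\mathbf{0})$ (axiom $\mathbf{S2}$) by peeling off successors. If $X=\{a_1,\dots,a_k\}$ is finite, take $\phi(x):=\bigvee_{i\le k}x=\overline{a_i}$; if $X$ is co-finite with complement $\{a_1,\dots,a_k\}$, take $\phi(x):=\bigwedge_{i\le k}x\neq\overline{a_i}$ (and $\phi(x):=(x=x)$ when $X=\mathbb{N}$). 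Using the auxiliary fact together with the consistency of $S$, one checks in each case that $n\in X\iff S\vdash\phi(\overline{n})$, so $X$ is weakly representable in $S$.

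For the harder direction, suppose $\phi(x)$ weakly represents $X$ in $S$, so that $X=\{n:S\vdash\phi(\overline{n})\}$. The heart of the matter is the following stabilization lemma: there is a number $N$, depending only on $\phi$, such that ${\sf Succ^{-}}\vdash\phi(\overline{n})\leftrightarrow\phi(\overline{m})$ for all $n,m\ge N$. Granting this, since $S$ extends ${\sf Succ^{-}}$ it also proves $\phi(\overline{n})\leftrightarrow\phi(\overline{m})$ for $n,m\ge N$; as $S$ is consistent, the truth value of ``$S\vdash\phi(\overline{n})$'' is therefore constant for $n\ge N$. Hence either $[N,\infty)\subseteq X$, making $X$ co-finite, or $[N,\infty)\cap X=\emptyset$, making $X\subseteq[0,N)$ finite. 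Either way $X$ is finite or co-finite.

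It remains to prove the stabilization lemma, and this is where the real work lies. By the G\"odel completeness theorem it suffices to establish the semantic statement that, for $n,m\ge N$, every model $M\models{\sf Succ^{-}}$ satisfies $\phi(\overline{n})\leftrightarrow\phi(\overline{m})$. Let $r$ be the quantifier rank of $\phi$ and let $N$ be a bound depending only on $r$ (one may take $N=2^{r}$). The key geometric observation is that in every model $M\models{\sf Succ^{-}}$ the numerals form an infinite forward chain isomorphic to $(\mathbb{N},\mathbf{0},\mathbf{S})$, since they are pairwise distinct and never cycle back: indeed ${\sf Succ^{-}}\vdash\mathbf{S}^{k}\overline{n}=\overline{n+k}\neq\overline{n}$. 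I would then run an Ehrenfeucht--Fra\"iss\'e argument (equivalently, invoke Gaifman locality) to show $(M,\overline{n})\equiv_r(M,\overline{m})$: once $n,m\ge N$, the bounded neighbourhoods of $\overline{n}$ and of $\overline{m}$ that an $r$-round game can explore are isomorphic finite paths that never reach the only named point $\mathbf{0}$, while the rest of $M$ is literally identical in the two copies. The duplicator wins by playing the identity away from the distinguished points and transferring this local isomorphism near them. Crucially, the bound $N$ depends only on $r$ and not on $M$, so the equivalence holds uniformly across all models, which is exactly what completeness requires.

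The main obstacle is making this stabilization uniform over \emph{all} consistent extensions, i.e.\ over all models of ${\sf Succ^{-}}$, including those containing $\mathbb{Z}$-chains and finite cycles. What keeps it manageable is that every numeral lies in the standard forward ray, so the cycle conditions $\mathbf{S}^{k}x=x$ are provably false at numerals and contribute nothing, and the only named point that could separate $\overline{n}$ from $\overline{m}$ is $\mathbf{0}$, whose distance to the distinguished point is beyond the reach of an $r$-round game once $n,m\ge N$. Thus the presence of cycles or $\mathbb{Z}$-chains, which blocks full quantifier elimination for ${\sf Succ^{-}}$ in the language $\{\mathbf{0},\mathbf{S}\}$, is harmless for the behaviour of formulas on numerals, and the locality argument goes through exactly as in the cycle-free case.
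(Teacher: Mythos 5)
Your proof is correct in outline, but it takes a genuinely different route from the paper's. The paper argues by contradiction: if $X$ is neither finite nor co-finite, it picks, via compactness, a model of $S+\phi(c)+\neg\phi(\mathbf{S}c)+\{c\neq\overline{n}:n\in\omega\}$, observes that $c$ must then sit on a $\mathbb{Z}$-chain or a cycle $C$, and uses the shift-on-$C$ automorphism of the $\{\mathbf{0},\mathbf{S}\}$-reduct to contradict $\phi(c)\wedge\neg\phi(\mathbf{S}c)$. You instead prove a uniform stabilization lemma, ${\sf Succ^{-}}\vdash\phi(\overline{n})\leftrightarrow\phi(\overline{m})$ for all $n,m\geq N$, via completeness plus an Ehrenfeucht--Fra\"iss\'e/locality argument carried out in every model of ${\sf Succ^{-}}$. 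The paper's argument is softer and shorter (one automorphism does all the work, and no game analysis is needed); yours is more work to make rigorous but yields strictly more: an explicit bound $N$ computable from $\phi$, and the stronger conclusion that the eventual constancy of $\phi$ on numerals is already a theorem of ${\sf Succ^{-}}$ rather than a fact about each consistent extension separately.

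Two technical points you should repair when writing this up. First, because $\mathbf{S}$ is a function symbol, the quantifier rank $r$ alone does not control $\phi$: the quantifier-free formula $x=\mathbf{S}^{k}\mathbf{0}$ separates $\overline{k}$ from $\overline{k+1}$ for arbitrarily large $k$, so $N=2^{r}$ fails. The bound $N$ must also depend on the maximal depth of terms occurring in $\phi$ (or you should first pass to a relational formulation at the cost of increasing $r$); this is harmless since the lemma only needs $N$ to depend on $\phi$. Second, ``duplicator plays the identity away from the distinguished points'' is not quite the right strategy on the standard ray: if spoiler plays $\overline{m}$ in the copy where $c=\overline{n}$, duplicator cannot answer $\overline{m}$ in the other copy, since there $\overline{m}$ \emph{is} the interpretation of $c$. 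The correct strategy splits the standard ray into the segment $[0,n]$ versus $[0,m]$ (handled by the usual game on two long paths with both endpoints named) and the tails $[n,\infty)$ versus $[m,\infty)$ (handled by the shift isomorphism), playing the identity only on the $\mathbb{Z}$-chains and cycles. With these adjustments the argument goes through.
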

\begin{proof}\label{}
It is easy to show that if $X$ is finite or co-finite, then $X$ is weakly representable in $S$. For example, if $X^{c}$, the complement of $X$, is $\{a_1, \cdots, a_n\}$, then $X$ is weakly representable in $S$ by the formula $\phi(x):= x\neq \overline{a_1}\wedge\cdots\wedge x\neq \overline{a_n}$.

Now we show that if $X$ is weakly representable in $S$, then $X$ is finite or co-finite. Suppose $X$ is weakly representable in $S$  via a formula $\phi(x)$, but $X$ is neither finite nor co-finite.

Note that $X$ is finite iff $\exists n\forall m>n (m\notin X)$, and $X$ is co-finite iff $\exists n\forall m>n (m\in X)$. Thus, if $X$ is neither finite nor co-finite, then for every $n$, there exists $m>n$ such that $m\in X$ and there exists $m>n$ such that $m\notin X$. Hence, for any $n$, there exists $m>n$ such that $m\in X$ and $m+1\notin X$.

We expand the language of $S$ by adding a new constant $c$. Define the theory $T$ in this new language as follows. Let $T:= S+\phi(c)+\neg \phi(\mathbf{S}c)+\{c\neq \overline{n}: n\in\omega\}$ where $\phi(x)$ is the formula which weakly represents $X$ in $S$.
\begin{lemma}
The theory $T$ is consistent.
\end{lemma}
\begin{proof}\label{}
We show that any finite sub-theory $W$ of $T$ has a model. Let $W= U+\phi(c)+\neg \phi(\mathbf{S}c)+\{c\neq \overline{n}: n\in\{a_1, \cdots, a_k\}\}$ where $U$ is a finite sub-theory of $S$. Let $n=\max(a_1, \cdots, a_k)$. Then there exists $m>n$ such that $m\in X$ and $m+1\notin X$. Since $n\in X \Leftrightarrow S\vdash \phi(\overline{n})$, we have $S\nvdash \phi(\mathbf{S}\overline{m})$. Thus, there exists a model $\mathfrak{M}$ in the language of $S$ such that $\mathfrak{M}\models S$ and $\mathfrak{M}\models \neg\phi(\mathbf{S}\overline{m})$. Since $m\in X$, $\mathfrak{M}\models \phi(\overline{m})$.
Let $\mathfrak{M}^{\prime}$ be the expansion of $\mathfrak{M}$ in the language of $T$ such that
$c^{\mathfrak{M}^{\prime}}=\overline{m}^{\mathfrak{M}}$. Then $\mathfrak{M}^{\prime}\models W$. By the compactness theorem, $T$ is consistent.
\end{proof}

Suppose $\mathfrak{M}^{\ast}\models T$. Let $c^{\mathfrak{M}^{\ast}}=a$. Then $a$ is either in a $\mathbb{Z}$-chain $C$ or in a cycle $C$. Define a function $f$ on the domain of $\mathfrak{M}^{\ast}$ as:
\[
 f(x) =
  \begin{cases}
   \mathbf{S}^{\mathfrak{M}^{\ast}}x, & \text{if $x\in C$;} \\
   x,       & \text{otherwise.}
  \end{cases}
\]
Note that $f$ is an automorphism from $\mathfrak{M}^{\ast}$ to $\mathfrak{M}^{\ast}$ in the language of $S$. Thus, we have for any $b$ in the domain of $\mathfrak{M}^{\ast}, \mathfrak{M}^{\ast}\models \phi[b]\Leftrightarrow \mathfrak{M}^{\ast}\models \phi[f(b)]$. Since $\mathfrak{M}^{\ast}\models \phi(c)$, we have $\mathfrak{M}^{\ast}\models \phi[a]$ and thus $\mathfrak{M}^{\ast}\models \phi[f(a)]$. Since $\mathfrak{M}^{\ast}\models \phi[f(a)]$ and $(\mathbf{S}c)^{\mathfrak{M}^{\ast}}=\mathbf{S}^{\mathfrak{M}^{\ast}}a=f(a)$, thus $\mathfrak{M}^{\ast}\models \phi(\mathbf{S}c)$ which contradicts  $\mathfrak{M}^{\ast}\models \neg \phi(\mathbf{S}c)$.

Thus, we have proved that $X$ is weakly representable in $S$ iff $X$ is finite or co-finite.
\end{proof}

\begin{lemma}\label{Rosser implies RSS}
\begin{enumerate}[(1)]
  \item The property {\sf REW} implies {\sf RSW}.
  \item {\sf Rosser} implies {\sf RSS}.
\end{enumerate}
\end{lemma}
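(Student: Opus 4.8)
The plan is to read both implications straight off the definitions, the only substantive observation being that a recursive set---unlike a general RE set---has an RE complement, so it pairs with its complement to form a disjoint pair of RE sets.

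For part (1), I would simply observe that every recursive set is in particular an RE set. Hence if $T$ is {\sf REW}, so that every RE set is weakly representable in $T$, then a fortiori every recursive set is weakly representable in $T$; this is exactly the property {\sf RSW}. No further work is required.

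For part (2), I would assume $T$ is {\sf Rosser} and let $A \subseteq \mathbb{N}$ be an arbitrary recursive set. First I would note that both $A$ and its complement $A^{c} = \mathbb{N} - A$ are recursive, hence RE, and that they are disjoint, so $(A, A^{c})$ is a disjoint pair of RE sets. Applying the {\sf Rosser} property to this pair yields a formula $\phi(x)$ such that $n \in A \Rightarrow T \vdash \phi(\overline{n})$ and $n \in A^{c} \Rightarrow T \vdash \neg \phi(\overline{n})$. Rewriting the second implication as ``$A(n)$ fails $\Rightarrow T \vdash \neg \phi(\overline{n})$'', this is precisely the assertion that $\phi(x)$ strongly represents $A$ in $T$. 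Since $A$ was arbitrary, $T$ is {\sf RSS}.

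I do not expect any genuine obstacle here; the one point requiring care is the appeal to recursiveness (not mere recursive enumerability) of $A$ in part (2), which is exactly what guarantees that $A^{c}$ is RE so that $(A, A^{c})$ is a legitimate disjoint pair of RE sets and the {\sf Rosser} property applies. This is also the structural reason that {\sf Rosser} delivers strong representability for recursive sets but not, in general, for arbitrary RE sets, whose complements need not be RE.
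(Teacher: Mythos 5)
Your proposal is correct and follows essentially the same route as the paper: part (1) is immediate since recursive sets are RE, and part (2) applies the {\sf Rosser} separation property to the disjoint RE pair $(A, A^{c})$ to obtain a formula that strongly represents $A$. The remark about why recursiveness (rather than mere recursive enumerability) of $A$ is needed is a sensible clarification, but the argument itself matches the paper's.
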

\begin{proof}\label{}
\begin{enumerate}[(1)]
  \item Follows from the definition.
  \item Suppose $T$ is {\sf Rosser}. Let $A$ be a recursive set. Then $(A,\overline{A})$ is a disjoint pair of RE sets. Since $T$ is {\sf Rosser}, there exists a formula $\phi(x)$ such that $n\in A\Rightarrow T\vdash \phi(\overline{n})$ and $n\in \overline{A}\Rightarrow T\vdash \neg\phi(\overline{n})$. Thus $A$ is strongly representable in $T$. Hence $T$ is {\sf RSS}.
\end{enumerate}
\end{proof}

The following theorem is a corollary of Theorem \ref{thm on Succ}.\smallskip

\begin{theorem}~\label{EI does not imply REW}
\begin{enumerate}[(1)]
\item The property {\sf EI} does not imply {\sf Rosser}.
  \item The property {\sf EI} does not imply {\sf REW}.
  \item The property {\sf EI} does not imply {\sf RFD}.
  \item The property {\sf EI} does not imply {\sf RSS}.
  \item The property {\sf EI} does not imply {\sf RSW}.
\end{enumerate}
\end{theorem}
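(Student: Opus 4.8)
The plan is to reduce all five non-implications to one construction: it suffices to exhibit a single consistent RE theory $T$ that is ${\sf EI}$ but not ${\sf RSW}$. Indeed, by Lemma \ref{RFD imply RSS}(2) we have ${\sf RSS}\Rightarrow{\sf RSW}$, by Lemma \ref{RFD imply RSS}(1) ${\sf RFD}\Rightarrow{\sf RSS}$, by Lemma \ref{Rosser implies RSS}(1) ${\sf REW}\Rightarrow{\sf RSW}$, and combining Lemma \ref{Rosser implies RSS}(2) with Lemma \ref{RFD imply RSS}(2) gives ${\sf Rosser}\Rightarrow{\sf RSS}\Rightarrow{\sf RSW}$. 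Hence a theory that fails ${\sf RSW}$ automatically fails ${\sf RSS}$, ${\sf RFD}$, ${\sf REW}$ and ${\sf Rosser}$, so the existence of one such ${\sf EI}$ theory $T$ settles (1)--(5) simultaneously.

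To build $T$ I would work over the language $\{\mathbf{0},\mathbf{S}\}$ so that Theorem \ref{thm on Succ} can be applied. Fix an ${\sf EI}$ pair $(B,C)$ of disjoint RE sets; such a pair exists, for instance $(\mathbf{R}_P,\mathbf{R}_R)$, which is ${\sf EI}$ by Theorem \ref{property of R}. For each $n\in\mathbb{N}$ let $D_n$ be the sentence asserting the existence of a cycle of length exactly $n+1$,
\[ D_n := \exists x\,\Big(\mathbf{S}^{n+1}x=x \wedge \bigwedge\nolimits_{1\le k\le n}\mathbf{S}^k x\neq x\Big), \]
and set $T := {\sf Succ^{-}} + \{D_n : n\in B\} + \{\neg D_n : n\in C\}$. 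The ``exactly'' clause makes the $D_n$ mutually independent over ${\sf Succ^{-}}$: an element of period exactly $m+1$ witnesses $D_n$ only when $m=n$, so for any prescribed pattern of truth values the model consisting of the standard part together with one cycle of length $n+1$ for each chosen index realizes precisely that pattern while satisfying ${\sf Succ^{-}}$. Taking cycles of lengths $\{n+1 : n\in B\}$ and using $B\cap C=\emptyset$ shows $T$ is consistent; it is plainly an RE extension of ${\sf Succ^{-}}$ in the same language, so Theorem \ref{thm on Succ} applies to $T$.

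Next I would verify that $T$ is ${\sf EI}$ by repeating the argument of Theorem \ref{EI not R}(3) verbatim, with ${\sf J}$ replaced by ${\sf Succ^{-}}$ and $A_n$ replaced by $D_n$. That proof uses only three facts, all of which hold here by construction: $n\mapsto\ulcorner D_n\urcorner$ is recursive, $n\in B\Rightarrow T\vdash D_n$ and $n\in C\Rightarrow T\vdash\neg D_n$, and $(B,C)$ is ${\sf EI}$. Concretely, writing $f:n\mapsto\ulcorner D_n\urcorner$ and using the s-m-n theorem to obtain $g$ with $f^{-1}[W_i]=W_{g(i)}$, the function $h(i,j)=f(t(g(i),g(j)))$ built from the ${\sf EI}$ function $t$ of $(B,C)$ witnesses the ${\sf EI}$ of $(T_P,T_R)$. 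Equivalently, $f$ semi-reduces $(B,C)$ to $(T_P,T_R)$, so composing with a semi-reduction of an arbitrary disjoint RE pair to the ${\sf EI}$ pair $(B,C)$ shows that every disjoint RE pair is semi-reducible to $(T_P,T_R)$, whence $T$ is ${\sf EI}$ by Theorem \ref{EI thm}.

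Finally, by Theorem \ref{thm on Succ} the sets weakly representable in $T$ are exactly the finite and co-finite ones. The set of even numbers is recursive but neither finite nor co-finite, so it is not weakly representable in $T$; thus $T$ is not ${\sf RSW}$, and by the implications collected in the first paragraph $T$ is also not ${\sf RSS}$, ${\sf RFD}$, ${\sf REW}$ or ${\sf Rosser}$, proving (1)--(5). The main obstacle, and the point the construction must negotiate, is the tension between the two demands: ${\sf EI}$ requires $(T_P,T_R)$ to be globally complicated, whereas the hypothesis of Theorem \ref{thm on Succ} forces $T$ to sit over the very weak base ${\sf Succ^{-}}$. The resolution is that ${\sf EI}$ is a property of the pair of provable and refutable indices, which can be made as rich as any ${\sf EI}$ pair by coding $(B,C)$ into the mutually independent cycle-sentences $D_n$, while weak representability by a single formula stays obstructed by the automorphisms of models of ${\sf Succ^{-}}$ exploited in the proof of Theorem \ref{thm on Succ}.
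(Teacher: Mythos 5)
Your proof is correct and follows essentially the same route as the paper: code an {\sf EI} pair $(B,C)$ into cycle-existence sentences over ${\sf Succ^{-}}$, transfer effective inseparability exactly as in Theorem \ref{EI not R}(3), and invoke Theorem \ref{thm on Succ} to refute {\sf RSW} and hence {\sf RSS}, {\sf RFD}, {\sf REW} and {\sf Rosser}. The one difference is small but in your favour: the paper uses $\phi_n:=\exists x(\mathbf{S}^{n}x=x)$, which are not mutually independent over ${\sf Succ^{-}}$ (a cycle of length $m$ witnesses $\phi_n$ whenever $m\mid n$, so consistency of the resulting theory needs care for an arbitrary {\sf EI} pair), whereas your ``exactly length $n+1$'' sentences $D_n$ make consistency immediate via the model you describe.
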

\begin{proof}\label{}
We work in the language of {\sf Succ}. Define the sentence $\phi_n:= \exists x (\mathbf{S}^{n}x=x)$.
Define the  theory $T:= {\sf Succ^{-}} + \{\phi_n: n\in B\} + \{\neg \phi_n: n\in C\}$ where $(B,C)$ is an {\sf EI} pair. Note that $T$ is {\sf EI}.

From Lemma \ref{RFD imply RSS} and Lemma \ref{Rosser implies RSS}, it suffices to show that $T$ is not {\sf RSW}.
Note that $T$ is a consistent extension of ${\sf Succ^{-}}$. From Theorem \ref{thm on Succ}, the weakly representable sets in $T$ are exactly the
finite or cofinite sets. Thus, $T$ is not {\sf RSW}.
\end{proof}

\begin{definition}\label{}
Let $T$ and $S$ be consistent RE theories. We say Boolean recursive isomorphisms \emph{preserve} some property $P$ if the following holds: if $T$ has the property $P$ and $T$ is Boolean recursively isomorphic to $S$, then $S$ also has the property $P$.
\end{definition}

As an application of Theorem \ref{EI does not imply REW}, we answer the following question: among the properties in Remark \ref{convention}, which ones are preserved under Boolean recursive
isomorphisms, and which ones are not preserved under Boolean recursive
isomorphisms.\smallskip

\begin{theorem}[\cite{Pour-EI-Kripke}, Theorem 2]~\label{EI BRI}
All {\sf EI} theories are Boolean recursively isomorphic.
\end{theorem}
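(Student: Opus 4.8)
The plan is to first record that Boolean recursive isomorphism is an equivalence relation on theories: the identity witnesses reflexivity; if $\Phi$ is a Boolean recursive isomorphism then $\Phi^{-1}$ is again recursive (compute $\Phi^{-1}(q)$ by searching for the unique $p$ with $\Phi(p)=q$) and inherits both the commutation with $\neg,\to$ and the biconditional $S\vdash\phi\Leftrightarrow T\vdash\Phi(\phi)$; and compositions of recursive bijections preserve all three conditions, giving transitivity. Hence it suffices to fix two arbitrary {\sf EI} theories $S$ and $T$ and manufacture a single Boolean recursive isomorphism $\Phi$ between them (equivalently, to show every {\sf EI} theory is Boolean recursively isomorphic to one fixed {\sf EI} theory, e.g.\ $\mathbf{R}$, which is {\sf EI} by Theorem \ref{property of R}).

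Next I would set up the algebraic skeleton. Taking $\neg$ and $\to$ as the primitive connectives, the sentences of a first-order language form the absolutely free algebra generated by the \emph{prime} sentences --- those whose principal symbol is not $\neg$ or $\to$ (atomic sentences and sentences beginning with a quantifier). A map commuting with $\neg$ and $\to$ is exactly a homomorphism of these free algebras, and it is a bijection provided its restriction is a bijection between the prime sentences of $L(S)$ and those of $L(T)$; arranging this generator-level bijection alongside the provability matching is part of the construction below. Thus the problem reduces to building, effectively and by a back-and-forth, a recursive bijection of prime sentences that induces a \emph{deduction-preserving} isomorphism of the two Lindenbaum algebras.

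The construction is an effective Cantor--Schr\"oder--Bernstein / Myhill procedure. I would build finite partial injections stage by stage, alternately forcing the next prime sentence of $S$ into the domain and the next prime sentence of $T$ into the range, maintaining the invariant that for every Boolean combination $\alpha$ of the primes committed so far, $S\vdash\alpha\Leftrightarrow T\vdash\hat\Phi(\alpha)$ and $S\vdash\neg\alpha\Leftrightarrow T\vdash\neg\hat\Phi(\alpha)$, where $\hat\Phi$ is the homomorphic extension of the current partial map. The engine of each extension step is effective inseparability. By Theorem \ref{EI thm} the nuclei $(S_P,S_R)$ and $(T_P,T_R)$ are mutually semi-reducible, and by Fact \ref{fact on creative} their complements are productive; this yields a recursive procedure that, given a fresh prime $p$ of $S$ together with the finite list of already-assigned sentences, returns a sentence $\psi$ of $T$ whose provability and refutability status \emph{relative to the committed images} is exactly the prescribed one --- that is, realizing the required finite provability pattern by a genuinely new, independent sentence --- and symmetrically for the inverse direction. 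Interleaving the two directions guarantees every prime of $S$ enters the domain and every prime of $T$ enters the range, so the limit map is a total recursive bijection commuting with $\neg,\to$ and satisfying $S\vdash\phi\Leftrightarrow T\vdash\Phi(\phi)$.

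The main obstacle is precisely this extension step: upgrading the semi-reductions of Theorem \ref{EI thm} and the productiveness supplied by Fact \ref{fact on creative} into a \emph{uniform} recursive operation that not only matches the provability status of a single new sentence but realizes an entire prescribed finite pattern of provabilities and refutabilities of Boolean combinations, while keeping the partial correspondence injective and compatible with its homomorphic closure. Marrying the effective-inseparability diagonalization to the freeness/independence bookkeeping of the Boolean algebra --- so that the final map is simultaneously bijective, connective-preserving, and deduction-preserving --- is the delicate combinatorial core, and is exactly the content established by Pour-El and Kripke. Verifying that the invariant survives both halves of the back-and-forth and that the limit map is total and recursive then completes the argument.
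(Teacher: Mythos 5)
The paper does not prove this statement at all: it is imported verbatim as Theorem~2 of Pour-El and Kripke, \emph{Deduction-preserving recursive isomorphisms between theories}, and is used as a black box. So there is no in-paper proof to compare against; what can be assessed is whether your sketch would stand on its own. Your high-level architecture is the right one and matches the actual Pour-El--Kripke argument: reduce to a single pair $(S,T)$ via the (correct) observation that Boolean recursive isomorphism is an equivalence relation; observe that sentences form the absolutely free algebra over the prime sentences under $\neg$ and $\to$, so that a connective-commuting bijection is forced to restrict to a bijection of primes (your implicit claim here is right --- if $\Phi(p)$ were $\neg\psi$ for a prime $p$, surjectivity would give $\psi=\Phi(\chi)$ and then $\Phi(p)=\Phi(\neg\chi)$, contradicting injectivity); and then run an effective back-and-forth on primes preserving the provability/refutability pattern of all Boolean combinations of the committed generators.

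The genuine gap is the one you yourself flag: the extension step. Everything in the theorem is concentrated there. Given the finitely many committed primes $p_1,\dots,p_k\mapsto q_1,\dots,q_k$ and a fresh prime $p$ of $S$, you must produce, \emph{uniformly and recursively}, a fresh \emph{prime} sentence $q$ of $T$ such that for every Boolean combination $\beta(x_1,\dots,x_k,x)$, $S\vdash\beta(p_1,\dots,p_k,p)$ iff $T\vdash\beta(q_1,\dots,q_k,q)$ --- an entire $2^{k+1}$-cell pattern of memberships in $(T_P,T_R,\text{neither})$, not merely the status of $q$ itself. The semi-reducibility of Theorem~\ref{EI thm} and the productivity of Fact~\ref{fact on creative} give you control over one sentence's membership in $T_P$ versus $T_R$; they do not by themselves yield the uniform ``pattern-realization'' operator, which in Pour-El--Kripke requires upgrading {\sf EI} to a doubly-productive/uniformly effectively inseparable form and a nontrivial combinatorial lemma guaranteeing that the realizing sentence can be taken prime and independent of the earlier commitments. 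Writing ``this is exactly the content established by Pour-El and Kripke'' concedes the theorem rather than proving it, so as a self-contained argument the proposal is an accurate roadmap with its engine missing.
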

\smallskip

\begin{theorem}[\cite{EHU}, Theorem 2.13]~\label{BRI HU}
Any RE theory is Boolean recursively isomorphic to an RE theory which is not {\sf EHU}.
\end{theorem}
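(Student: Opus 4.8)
The plan is to reduce the statement to a purely structural fact about sub-theories and then to a concrete coding. The key first observation is that, to witness that a theory $S$ fails to be {\sf EHU}, it suffices to witness that $S$ itself fails to be {\sf HU}: since $S$ is a consistent RE extension of itself, a single decidable sub-theory of $S$ over $L(S)$ already shows that some consistent RE extension of $S$ (namely $S$) is not {\sf HU}, hence $S$ is not {\sf EHU}. Equivalently, one may invoke Theorem \ref{EHU CT}(3): if some decidable RE theory over $L(S)$ is consistent with $S$, then $S$ is not {\sf EHU}. Thus it suffices, given an arbitrary consistent RE theory $T$, to produce a consistent RE theory $S$ that is Boolean recursively isomorphic to $T$ and carries a decidable sub-theory over its own language.

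The second idea is that a Boolean recursive isomorphism only has to match the Boolean structure of provability and is completely free on the non-Boolean (atomic and quantifier) syntax. Indeed, the set of sentences of a first-order language is the absolutely free algebra over $\{\neg,\rightarrow\}$ generated by its Boolean-prime sentences (the atomic sentences and the sentences beginning with a quantifier), so a bijection $\Phi$ commuting with the connectives is exactly a recursive bijection between the prime sentences of $L(T)$ and those of a target language, subject to the provability constraint. This leaves us free to choose the target language so that it manufactures a decidable sub-theory. Two natural choices are available: take $L(S)$ to be an infinite monadic language, so that by the decidability of the monadic class the valid sentences of $L(S)$ form a decidable sub-theory of $S$; or build $S$ as an extension of Janiczak's decidable theory {\sf J} (see Remark \ref{theory of J} and Theorem \ref{J thm}), using {\sf J} itself as the decidable sub-theory.

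The Janiczak route is the most controllable, because Theorem \ref{J thm}(2) gives an \emph{effective} reduction of every $L_{\sf J}$-sentence to a Boolean combination of the mutually independent sentences $A_n$; hence the Lindenbaum algebra of {\sf J} is the free Boolean algebra on $\{[A_n]\}$, and every countable Boolean algebra with a recursively enumerable presentation --- in particular the Lindenbaum algebra $\mathrm{Sent}(L(T))/{\equiv_T}$ of our RE theory $T$ --- arises as a quotient of it by an RE set of relations. I would therefore transport, through fixed recursive enumerations of prime sentences, the relations that $T$ imposes on its own generators to an RE list $\Gamma$ of Boolean combinations of the $A_n$, set $S := {\sf J}+\Gamma$, and let $\Phi$ be the induced bijection of sentences; one then checks that $\Phi$ is a recursive bijection commuting with $\neg,\rightarrow$ with $T\vdash\phi \iff S\vdash\Phi(\phi)$, and that $S$ is consistent. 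Alternatively, one may first replace $T$ by the finitely axiomatised, Boolean recursively isomorphic theory $pere(i)$ of Theorem \ref{RBM key thm} and code that over a decidable monadic base, just as Putnam's construction in Example \ref{Putnam E} codes RE data over the decidable theory ${\sf D}$.

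The main obstacle is exactly the synchronisation in this last step: producing a single total recursive bijection $\Phi$ on all sentences that simultaneously commutes with the Boolean connectives, transports provability in both directions, and sends $T$'s deductively closed theorem set onto a set that is deductively closed in the target language. The third requirement is the delicate one, because the target language has its own first-order consequences among the non-generator (quantified) sentences that are invisible at the Boolean level; the effective reduction to the independent generators $A_n$ of Theorem \ref{J thm}(2) --- or, in the monadic version, quantifier elimination for the monadic class --- is precisely what neutralises these extra consequences and makes $\Phi[T_P]$ a genuine theory. Once this recursive Lindenbaum-algebra isomorphism is in place, $S$ has the decidable sub-theory {\sf J} (respectively, pure monadic logic), so $S$ is not {\sf HU} and therefore not {\sf EHU}, which completes the argument.
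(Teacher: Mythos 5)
First, a remark on the comparison you asked for: the paper does not prove this statement at all --- it is imported verbatim from Visser \cite{EHU} (Theorem 2.13) --- so there is no in-paper argument to measure yours against, and I can only assess the proposal on its own terms. Your opening reduction is correct (a decidable sub-theory of $S$ over $L(S)$ already witnesses that $S$ is not {\sf HU}, hence not {\sf EHU}), and so is the structural observation that a Boolean recursive isomorphism is exactly a recursive bijection between the Boolean-prime sentences of the two languages, extended homomorphically, subject to the provability constraint.

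The gap is in the construction of $\Phi$, and it is the one you yourself flag as ``synchronisation'' before waving it away. Sending the $i$-th prime sentence of $L(T)$ to $A_i$ and extending homomorphically gives a recursive \emph{injection} of $\mathrm{Sent}(L(T))$ into $\mathrm{Sent}(L_{\sf J})$ whose image is the subalgebra generated by $\{A_n\}$; it misses every prime $L_{\sf J}$-sentence that is not literally some $A_n$ (for instance $\exists x\, E(x,x)$, or any other quantified sentence), so it is nowhere near a bijection. Theorem \ref{J thm}(2) does not repair this: it says every $L_{\sf J}$-sentence is \emph{provably equivalent over} {\sf J} to a Boolean combination of the $A_n$, but a Boolean recursive isomorphism is a bijection on raw sentences, not on provable-equivalence classes, so at best you obtain a recursive isomorphism of Lindenbaum algebras --- a strictly weaker conclusion (and your claim that $\Phi[T_P]$ is ``a genuine theory'' is false: $S_P$ contains, e.g., all theorems of {\sf J} outside the range of $\Phi$). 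Nor can you fix this by choosing a different bijection of prime sentences, because then the theory defined by $S\vdash\psi:\Leftrightarrow T\vdash\Phi^{-1}(\psi)$ need not be deductively closed: $L_{\sf J}$ has logical entailments among its prime sentences (e.g.\ $\forall x\forall y\, E(x,y)\vdash\exists x\, E(x,x)$) with no counterpart among the prime sentences of $L(T)$. Reconciling surjectivity with deductive closure is precisely the content of the theorem; it requires a Pour-El--Kripke/Myhill-style back-and-forth (a Cantor--Schr\"oder--Bernstein argument for deduction-preserving recursive maps) or Peretyat'kin-style machinery as in Theorem \ref{RBM key thm}, none of which your sketch supplies. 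As written, your argument establishes only that every RE theory admits a provability-preserving recursive embedding, commuting with the connectives, into a theory that is not {\sf EHU}.
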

\smallskip

\begin{theorem}~\label{}
\begin{enumerate}[(1)]
  \item Boolean recursive isomorphisms preserve the following properties: {\sf EI}, {\sf RI}, {\sf TP}, {\sf EU}, {\sf Creative} and $\mathbf{0}^{\prime}$.
  \item Boolean recursive isomorphisms do not preserve the following properties: interpreting $\mathbf{R}$, {\sf EHU}, {\sf Rosser}, {\sf REW}, {\sf RFD}, {\sf RSS} and {\sf RSW}.
\end{enumerate}
\end{theorem}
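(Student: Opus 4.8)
The plan is to exploit the single structural fact that a Boolean recursive isomorphism $\Phi$ between $S$ and $T$ restricts to a recursive isomorphism of the nuclei. First I would record that, identifying sentences with their G\"odel numbers, $\Phi$ is a recursive bijection on sentence codes with recursive inverse, that $\phi\in S_P \Leftrightarrow \Phi(\phi)\in T_P$ (this is clause (2) of the definition), and that $\phi\in S_R\Leftrightarrow\Phi(\phi)\in T_R$ (using $\Phi(\neg\phi)=\neg\Phi(\phi)$ together with clause (2)). Hence $T_P=\Phi[S_P]$ and $T_R=\Phi[S_R]$, so $\Phi$ is a recursive isomorphism of the pair $(S_P,S_R)$ onto $(T_P,T_R)$ and in particular $S_P\equiv_T T_P$. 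Since a Boolean recursive isomorphism is symmetric (its inverse is again one), it suffices to transfer each property in one direction.

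For the recursion-theoretic properties in (1) this transfer is immediate by composing reductions with $\Phi$ or $\Phi^{-1}$. For {\sf EI} I would use the semi-reducibility characterization (Theorem \ref{EI thm}): a disjoint RE pair $(A,B)$ that semi-reduces to $(T_P,T_R)$ via a recursive $g$ also semi-reduces to $(S_P,S_R)$ via $\Phi^{-1}\circ g$, because $g(a)\in T_P=\Phi[S_P]$ forces $\Phi^{-1}(g(a))\in S_P$, and symmetrically on $B$. For {\sf RI}, a recursive set separating $(S_P,S_R)$ would have $\Phi$-image a recursive set separating $(T_P,T_R)$, contradicting {\sf RI} of $T$. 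For {\sf Creative} I would use Fact \ref{fact on creative}(1): every RE set reduces to $T_P$, hence to $S_P$ via $\Phi^{-1}$, so $S_P$ is creative. For $\mathbf{0}^{\prime}$ the Turing equivalence $S_P\equiv_T T_P$ gives equality of degrees. Each of these is routine once the isomorphism of nuclei is in hand.

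The hard part will be {\sf EU} and {\sf TP}, since these quantify over extensions \emph{over the same language}, while $\Phi$ changes the language. Here I would build a correspondence between RE extensions: given a consistent RE extension $S^{\prime}$ of $S$ over $L(S)$, set $T^{\prime}:=T+\{\Phi(\psi):S^{\prime}\vdash\psi\}$, an RE $L(T)$-theory extending $T$. The key lemma is $T^{\prime}\vdash\Phi(\phi)\Leftrightarrow S^{\prime}\vdash\phi$. The nontrivial direction uses compactness and the deduction theorem: if $T^{\prime}\vdash\Phi(\phi)$ then $T\vdash\Phi(\psi_1)\to(\cdots\to(\Phi(\psi_k)\to\Phi(\phi))\cdots)$ for some $\psi_i$ with $S^{\prime}\vdash\psi_i$; since $\Phi$ commutes with $\to$, the displayed formula is $\Phi$ of the corresponding nested implication, so by clause (2) $S\vdash\psi_1\to(\cdots\to(\psi_k\to\phi)\cdots)$, whence $S^{\prime}\vdash\phi$. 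From this lemma, together with $\Phi(\neg\phi)=\neg\Phi(\phi)$ and $T^{\prime}_P=\Phi[S^{\prime}_P]$, I get that $T^{\prime}$ is consistent iff $S^{\prime}$ is, that $T^{\prime}$ is decidable iff $S^{\prime}$ is, and that $T^{\prime}\equiv_T S^{\prime}$. Then {\sf EU} transfers (a decidable consistent extension of $S$ would yield one of $T$) and {\sf TP} transfers (the degree inequality $T\leq_T T^{\prime}$ transports along $S_P\equiv_T T_P$ and $S^{\prime}_P\equiv_T T^{\prime}_P$ to give $S\leq_T S^{\prime}$). Establishing this extension-correspondence cleanly is the main obstacle.

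For (2) all the work is already done by earlier results. For interpreting $\mathbf{R}$ and {\sf EHU}: by Theorem \ref{BRI HU}, $\mathbf{R}$ (which is {\sf EHU} and trivially interprets $\mathbf{R}$) is Boolean recursively isomorphic to some $S$ that is not {\sf EHU}; since interpreting $\mathbf{R}$ implies {\sf EHU} by Proposition \ref{coro of R-like}, this $S$ neither is {\sf EHU} nor interprets $\mathbf{R}$, so both properties fail to be preserved. For {\sf Rosser}, {\sf REW}, {\sf RFD}, {\sf RSS}, {\sf RSW}: the theory $\mathbf{R}$ is {\sf EI} (Theorem \ref{relation about EI}) and enjoys all five properties (Theorem \ref{property of R}), whereas the {\sf EI} theory exhibited in Theorem \ref{EI does not imply REW} has none of them; by Theorem \ref{EI BRI} these two {\sf EI} theories are Boolean recursively isomorphic, so none of the five properties is preserved.
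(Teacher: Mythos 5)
Your proof is correct and follows the same overall decomposition as the paper's, but it is more careful on exactly the point the paper glosses over. For part (1) the paper, like you, reduces everything to the fact that $\Phi$ restricts to a recursive isomorphism of the nuclei, and handles {\sf EI} via the semi-reducibility characterization; for {\sf TP}, however, the paper simply asserts that $g^{-1}[U]$ is a consistent RE extension of $S$ whenever $U$ is one of $T$, and dismisses {\sf EU} with ``easy to check from definitions.'' The assertion that the $\Phi$-preimage of a deductively closed extension is again deductively closed is precisely the non-obvious step (since $\Phi$ commutes only with Boolean connectives, not with quantifiers or substitution), and your extension-correspondence lemma --- axiomatizing $T^{\prime}:=T+\Phi[S^{\prime}_P]$ and verifying $T^{\prime}\vdash\Phi(\phi)\Leftrightarrow S^{\prime}\vdash\phi$ via compactness, the deduction theorem, and commutation with $\to$ --- supplies exactly the missing justification; it is in effect the argument the paper leaves implicit. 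For part (2) you and the paper use the same witnesses for {\sf Rosser}, {\sf REW}, {\sf RFD}, {\sf RSS}, {\sf RSW} (the theory $\mathbf{R}$ versus the {\sf EI} theory of Theorem \ref{EI does not imply REW}, linked by Theorem \ref{EI BRI}); for non-preservation of interpreting $\mathbf{R}$ you route through Theorem \ref{BRI HU} plus the implication ``interprets $\mathbf{R}$ $\Rightarrow$ {\sf EHU}'' rather than the paper's pairing of $\mathbf{R}$ with the theory of Theorem \ref{EI not R} via Theorem \ref{EI BRI}; both are valid, and yours has the small economy of disposing of {\sf EHU} and interpreting $\mathbf{R}$ with a single counterexample. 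The only cosmetic caveat in your write-up is the usual bookkeeping about numbers that do not code sentences when composing reductions with $\Phi^{\pm 1}$, which is standard and also suppressed in the paper.
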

\begin{proof}\label{}
(1): We only show that Boolean recursive isomorphisms preserve {\sf EI} and {\sf TP} theories. Others are easy to check from definitions.

Let $S$ and $T$ be RE theories. Suppose $S$ is {\sf EI} and $S$ is Boolean recursively isomorphic to $T$ via a recursive bijection $g$. We show that $T$ is {\sf EI}. Let $(A,B)$ be a disjoint pair  of RE sets. Since $S$ is {\sf EI},
by Theorem \ref{EI thm}, there exists a recursive function $f$ such that $n\in A\Rightarrow f(n)\in S_P$ and  $n\in B\Rightarrow f(n)\in S_R$. Since $S$ is Boolean recursively isomorphic to $T$ via $g$, we have $f(n)\in S_P\Leftrightarrow g(f(n))\in T_P$ and $f(n)\in S_R \Leftrightarrow g(f(n))\in T_R$. By Theorem \ref{EI thm}, $T$ is {\sf EI}.

Suppose $S$ is {\sf TP}  and $S$ is Boolean recursively isomorphic to $T$ via a recursive bijection $g$. We show that $T$ is {\sf TP}. Suppose $U$ is a consistent RE extension of $T$. Then $g^{-1}[U]$ is a consistent RE extension of $g^{-1}[T]$. Since $g^{-1}[T]=S$ and $S$ is {\sf TP}, we have $S {\sf \leq_T} g^{-1}[U]$. Thus, $T {\sf \leq_T} U$.

(2): From Theorem \ref{EI BRI}, all {\sf EI} theories are Boolean recursively isomorphic. But ``$T$ is {\sf EI}" does not imply that $T$ interprets $\mathbf{R}$  from Theorem \ref{EI not R}. Thus, Boolean recursive isomorphisms do not preserve the property of interpreting $\mathbf{R}$.

From Theorem \ref{BRI HU}, Boolean recursive isomorphisms do not preserve {\sf EHU} theories.

Let $P$ be any one of the following properties: {\sf Rosser},
{\sf REW}, {\sf RFD}, {\sf RSS} and {\sf RSW}. From Theorem \ref{EI does not imply REW}, {\sf EI} does not imply the property $P$. Take an {\sf EI} theory $T$ such that $T$ does not have the property $P$. From Theorem \ref{property of R}, the theory $\mathbf{R}$ has the property $P$. Since both $\mathbf{R}$ and $T$ are {\sf EI} theories, by Theorem \ref{EI BRI}, $\mathbf{R}$ and $T$ are Boolean recursively isomorphic. Since $T$ does not have the property $P$, Boolean recursive isomorphisms do not preserve the property $P$.
\end{proof}

Now, we show that {\sf Creative} does not imply ${\sf EU}$.\smallskip

\begin{theorem}\label{creative not imply EU}
{\sf Creative} does not imply ${\sf EU}$.
\end{theorem}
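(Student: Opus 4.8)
The plan is to exploit the structural gap between the two properties: {\sf Creative} is a statement about the single set $T_P$—it merely asserts that $T_P$ is a complete RE set—whereas {\sf EU} constrains \emph{every} consistent RE extension of $T$. These demands can be decoupled by coding a complete RE set into the provable atoms of a propositional theory while deliberately leaving room for a trivial decidable completion. Concretely, I would work in the propositional language (equivalently, the first-order language without equality whose only nonlogical symbols are the $0$-ary predicate symbols) $\{p_n : n \in \mathbb{N}\}$. Fixing a complete RE set $K$ (e.g.\ the halting set, which is creative), I define the consistent RE theory
\[ T := \{\, p_n : n \in K \,\}. \]

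First I would check that $T$ is {\sf Creative}. The map $n \mapsto \ulcorner p_n\urcorner$ is recursive, and the key claim is $n \in K \Leftrightarrow T \vdash p_n$. The forward direction is immediate, since $p_n$ is an axiom of $T$ when $n \in K$. For the converse, the atoms are logically independent, so if $n \notin K$ there is a valuation assigning true to every $p_m$ with $m \in K$ and false to $p_n$; this is a model of $T$ in which $p_n$ fails, whence $T \nvdash p_n$. Thus $K$ is (many-one) reducible to $T_P$. Since $K$ is complete, every RE set $X$ is reducible to $K$ and hence to $T_P$, so by Fact~\ref{fact on creative}(1) the theory $T$ is {\sf Creative}.

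Next I would show that $T$ is not {\sf EU} by exhibiting a consistent decidable extension over the same language. Take $S := \{\, p_n : n \in \mathbb{N}\,\}$. Then $T \subseteq S$, so $S$ is an RE extension of $T$; $S$ is consistent because assigning true to every atom yields a model; and $S$ is decidable because, its models pinning down every atom, we have $S \vdash \phi$ iff $\phi$ evaluates to true under the all-true valuation, a recursive test on the Boolean combination $\phi$ (vacuous quantifiers and absence of equality cause no trouble). Hence $S$ is a consistent decidable, a fortiori RE, extension of $T$, so $T$ is not {\sf EU}.

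The only genuinely delicate points are the two verifications flagged above: that provability of a bare atom from $T$ captures $K$ exactly—the nontrivial direction resting on the independence of the atoms—and that $S_P$ is recursive, resting on $S$ determining every atom. Both become routine precisely because the language is taken to be purely propositional, which is the reason for using $0$-ary predicates and suppressing equality. The real content is the structural observation that creativity of $T_P$ is compatible with the recursive separability of $(T_P,T_R)$; indeed $S_P$ recursively separates $T_P$ from $T_R$, so our $T$ fails even to be {\sf RI}, consistent with Theorem~\ref{relation about EI}.
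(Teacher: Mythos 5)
Your proof is correct. It follows the same skeleton as the paper's argument --- code a creative set $K$ into a family of mutually independent sentences over a decidable base, observe that $n\mapsto\ulcorner\text{(the $n$-th such sentence)}\urcorner$ reduces $K$ to $T_P$ (independence giving the backward direction), and then kill {\sf EU} by adjoining \emph{all} of the independent sentences to get a consistent decidable completion. The difference is in the carrier: the paper uses Janiczak's theory ${\sf J}$ with the sentences $A_n$, so it must invoke Theorem~\ref{J thm} both for the mutual independence of the $A_n$ over ${\sf J}$ (to get $T\vdash A_{f(n)}\Rightarrow f(n)\in X$) and for the decidability of ${\sf J}+\{A_n:n\in\omega\}$, whereas you take the base to be empty in a purely relational language of $0$-ary predicates, so independence of the atoms and decidability of $\{p_n:n\in\mathbb{N}\}$ are immediate. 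Your version is therefore more elementary and self-contained; the paper's choice of ${\sf J}$ buys nothing for this particular theorem beyond uniformity with the constructions in Theorems~\ref{EI not R}, \ref{thm on TP original} and \ref{zero sharp does not imply creative}, where the same template is reused. Your closing remark that $S_P$ recursively separates $(T_P,T_R)$, so the example even fails {\sf RI}, is a correct and slightly sharper observation than what the paper records here.
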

\begin{proof}\label{}
Suppose $X$ is creative.  Define $T:={\sf J}+ \{A_n: n\in X\}$ (see Remark \ref{theory of J} for the definitions of ${\sf J}$ and $A_n$). We first show that $T$ is {\sf Creative}. By Fact  \ref{fact on creative}(1), it suffices to show that any RE set is reducible to $T_P$ via a recursive function. Let $B$ be an RE set. Since $X$ is creative, there exists a recursive function $f$ such that $n\in B \Leftrightarrow f(n)\in X$. Define the function $g: n\mapsto \ulcorner A_n\urcorner$. Let $h(n)=g(f(n))$. Note that $h$ is recursive. If $n\in B$, then $h(n)\in T_P$. Now suppose $h(n)\in T_P$. Then $T\vdash A_{f(n)}$.  We have $f(n)\in X$: by Theorem \ref{J thm}(2), $A_{f(n)}$ is equivalent with a Boolean combination of $A_i$'s over ${\sf J}$ with $i\in X$; if $f(n)\notin X$, this contradicts that $A_n$'s are mutually independent over ${\sf J}$.
Thus $n\in B$. We have $n\in B \Leftrightarrow h(n)\in T_P$. Hence, $T$ is {\sf Creative}.

Note that ${\sf J}+ \{A_n \mid n\in \omega\}$ is a consistent complete decidable RE extension of $T$ by Theorem \ref{J thm}.   Thus, $T$ is not {\sf EU}.
\end{proof}

Now we show that the property {\sf REW} does not imply {\sf EU}.\smallskip

\begin{theorem}[{\L}o\'{s}-Vaught test, \cite{Enderton}, p.157]~\label{Vaught test}
Let $T$ be a theory in a countable language.
Assume that $T$ has no finite models.
If $T$ is $\kappa$-categorical for some infinite cardinal $\kappa$, then $T$ is
complete.
\end{theorem}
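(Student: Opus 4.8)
The plan is to argue by contradiction, exploiting the fact that categoricity at a single infinite cardinal forces all models of that cardinality to agree on every sentence, whereas incompleteness would manufacture two models of that cardinality disagreeing on some sentence. First I would assume $T$ is not complete, so that there is a sentence $\phi$ in the language of $T$ with $T\nvdash\phi$ and $T\nvdash\neg\phi$. By the completeness theorem for first-order logic, both $T+\phi$ and $T+\neg\phi$ are then consistent, and hence each has a model, say $\mathfrak{A}\models T+\phi$ and $\mathfrak{B}\models T+\neg\phi$.

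Next I would upgrade these to models of cardinality exactly $\kappa$. Since $T$ has no finite models, every model of $T$---in particular $\mathfrak{A}$ and $\mathfrak{B}$---is infinite. By the L\"owenheim--Skolem--Tarski theorem, a theory in a countable language possessing an infinite model has a model of every infinite cardinality $\kappa\geq\aleph_0$; applying this to $T+\phi$ and to $T+\neg\phi$ yields models $\mathfrak{A}^{\prime}\models T+\phi$ and $\mathfrak{B}^{\prime}\models T+\neg\phi$ with $|\mathfrak{A}^{\prime}|=|\mathfrak{B}^{\prime}|=\kappa$.

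Finally I would derive the contradiction. Both $\mathfrak{A}^{\prime}$ and $\mathfrak{B}^{\prime}$ are models of $T$ of cardinality $\kappa$, so by $\kappa$-categoricity $\mathfrak{A}^{\prime}\cong\mathfrak{B}^{\prime}$. Since isomorphic structures are elementarily equivalent, they satisfy exactly the same sentences; but $\mathfrak{A}^{\prime}\models\phi$ while $\mathfrak{B}^{\prime}\models\neg\phi$, which is absurd. Hence no such independent sentence $\phi$ exists, and $T$ is complete.

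The step I expect to require the most care is the cardinality adjustment via L\"owenheim--Skolem: one must invoke both the downward direction (to descend to $\kappa$ from a possibly larger model) and the upward direction (to ascend to $\kappa$ from a smaller, say countable, model), and it is precisely here that the countability of the language is used, ensuring that models of the exact cardinality $\kappa$ are available for each of $T+\phi$ and $T+\neg\phi$. The remaining ingredients---the completeness theorem and the invariance of truth of sentences under isomorphism---are entirely routine.
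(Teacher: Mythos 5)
Your proof is correct and is the standard argument for the {\L}o\'{s}--Vaught test (contrapositive via the completeness theorem, L\"owenheim--Skolem in both directions to land on cardinality $\kappa$, then $\kappa$-categoricity plus elementary equivalence of isomorphic models); the paper itself gives no proof and simply cites Enderton, so there is nothing to diverge from. Your identification of where the hypotheses ``no finite models'' and ``countable language'' enter is exactly right.
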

\smallskip

\begin{fact}[\cite{Murawski}, Corollary 3.1.10]~\label{EU CT}
Let $T$ be a consistent RE theory.
The theory $T$ is ${\sf EU}$ iff any consistent RE extension of $T$ over the same language is incomplete.
\end{fact}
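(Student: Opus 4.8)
The plan is to establish the equivalence by proving the contrapositive of each direction, relying on two standard effectivity facts that bridge decidability and completeness for RE theories.

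First I would record the fact that a consistent complete RE theory is decidable: to decide whether a sentence $\phi$ is a theorem, enumerate the theorems until either $\phi$ or $\neg\phi$ appears; by completeness one of them must, and by consistency not both, so the search always halts and correctly decides membership. With this in hand, the direction ``$T$ is {\sf EU} $\Rightarrow$ every consistent RE extension of $T$ over the same language is incomplete'' is immediate by contraposition: if some consistent RE extension $S$ over $L(T)$ were complete, then $S$ would be decidable, and a decidable consistent RE extension of $T$ over the same language contradicts essential undecidability.

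For the reverse direction I would again argue contrapositively. Suppose $T$ is not {\sf EU}, so there is a consistent \emph{decidable} RE extension $S$ of $T$ over the same language; the goal is to produce a complete consistent RE extension of $T$, contradicting the hypothesis that every such extension is incomplete. The natural construction is an effective Lindenbaum completion: fix a recursive enumeration $\phi_0, \phi_1, \ldots$ of all $L(T)$-sentences, set $S_0 = S$, and at stage $n$ put $S_{n+1} = S_n \cup \{\phi_n\}$ if this is consistent and $S_{n+1} = S_n \cup \{\neg\phi_n\}$ otherwise. The union $S^{\ast} = \bigcup_n S_n$ is consistent and complete by construction and extends $T$ over the same language.

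The main obstacle is verifying that $S^{\ast}$ is RE (indeed decidable) rather than merely an abstract set of sentences, and this is precisely where the decidability of $S$ is essential. Each $S_n$ differs from $S$ by finitely many added sentences, so $S_n \vdash \psi$ is equivalent to $S \vdash (\chi_n \rightarrow \psi)$ for the conjunction $\chi_n$ of the added sentences; hence the $S_n$ are uniformly decidable. Consequently the consistency test ``is $S_n \cup \{\phi_n\}$ consistent?'' reduces to the decidable question ``$S_n \vdash \neg\phi_n$?'', the whole construction is effective, and membership of $\phi_n$ in $S^{\ast}$ can be decided by running the construction to stage $n$. Thus $S^{\ast}$ is a complete consistent RE extension of $T$ over $L(T)$, completing the contrapositive. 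Throughout I would take care that every extension considered stays within the language $L(T)$, so that the construction remains within the scope of the definition of {\sf EU}.
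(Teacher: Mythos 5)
Your proof is correct. The paper states this only as a citation to Murawski (Corollary 3.1.10) and gives no proof of its own, so there is nothing to compare against; your argument --- complete consistent RE theories are decidable in one direction, and an effective Lindenbaum completion over the decidable witness $S$ in the other, with the RE-ness of $S^{\ast}$ justified via the uniform decidability of the finitely-augmented stages $S_n$ --- is the standard proof and handles the one genuinely delicate point (effectivity of the completion) correctly.
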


\begin{theorem}\label{REW not imply EU}
The property {\sf REW} does not imply {\sf EU}.
\end{theorem}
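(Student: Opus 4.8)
The plan is to exhibit a single theory $T$ that is {\sf REW} yet admits a consistent, complete, recursively axiomatised extension $S$ in the same language. By Fact \ref{EU CT}, $T$ is {\sf EU} iff every consistent RE extension over the same language is incomplete; so producing such an $S$ (which, being complete and RE, is also decidable) shows at once that $T$ is not {\sf EU}. The tension to resolve is that {\sf REW} is a strong richness requirement while completeness is a tameness requirement. The idea is to put the entire burden of weak representability on a family of monadic predicates attached to the numerals, and then observe that these predicates can be consistently \emph{saturated} (forced to hold everywhere) to yield a $\kappa$-categorical, hence complete, extension.

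Concretely, I would work in the language $L=\{\mathbf{0},\mathbf{S}\}\cup\{P_i : i\in\omega\}$ with each $P_i$ a unary predicate, and set
\[
T:= {\sf Succ} + \{\,P_i(\overline{n}) : i\in\omega,\ n\in W_i\,\},
\]
where $\overline{n}=\mathbf{S}^n\mathbf{0}$. First I would check $T$ is {\sf REW}. Given an RE set $X$, pick $i$ with $X=W_i$; I claim $P_i(x)$ weakly represents $X$. The implication $n\in W_i\Rightarrow T\vdash P_i(\overline{n})$ is immediate, since $P_i(\overline{n})$ is then an axiom. For the converse, consider the structure $\mathfrak{N}^{\ast}$ whose $\{\mathbf{0},\mathbf{S}\}$-reduct is the standard successor model $\langle\mathbb{N},0,\mathrm{succ}\rangle$ and in which $P_j$ is interpreted as $W_j$ for every $j$. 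Then $\mathfrak{N}^{\ast}\models T$, so $T\vdash P_i(\overline{n})$ forces $\mathfrak{N}^{\ast}\models P_i(\overline{n})$, i.e.\ $n\in W_i$. Hence $T$ is {\sf REW}, and is consistent since $\mathfrak{N}^{\ast}$ is a model.

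Next I would produce the decidable extension. Let
\[
S:= {\sf Succ} + \{\,\forall x\, P_i(x) : i\in\omega\,\}.
\]
Since $\forall x\,P_i(x)\vdash P_i(\overline{n})$, $S$ proves every axiom of $T$, so $S$ is a consistent (the all-true standard model witnesses this) RE extension of $T$ in the same language, recursively axiomatised because the scheme over $i\in\omega$ is recursive. To see $S$ is complete I would apply the {\L}o\'{s}--Vaught test (Theorem \ref{Vaught test}): $L$ is countable, $S$ has no finite models (the successor axioms, in particular $\mathbf{S4}.n$, force an infinite standard part), and $S$ is $\kappa$-categorical for uncountable $\kappa$. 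For the last point, if $\mathfrak{M},\mathfrak{M}'\models S$ have cardinality $\kappa$, their $\{\mathbf{0},\mathbf{S}\}$-reducts are isomorphic by Fact \ref{fact on Succ}(2); since each $P_i$ is interpreted as the whole domain in both models, any reduct isomorphism automatically respects all the $P_i$ and is therefore an $L$-isomorphism. Thus $S$ is complete, and being RE it is decidable, so $T$ is not {\sf EU} while remaining {\sf REW}.

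The step I would expect to need the most care is exactly the $\kappa$-categoricity of $S$ in the presence of infinitely many monadic predicates: in general adding countably many unary predicates destroys categoricity, since the isomorphism type records which subsets of predicates are realised and with what multiplicities. This is precisely what the saturation axioms $\forall x\,P_i(x)$ buy us — they collapse all the predicate data, so that the isomorphism type of a model of $S$ is governed entirely by its $\kappa$-categorical successor reduct supplied by Fact \ref{fact on Succ}(2). Keeping the positive weak-representability data ($P_i(\overline{n})$ on the numerals) separate from the saturating extension is what lets the same language simultaneously support {\sf REW} in $T$ and tameness in $S$.
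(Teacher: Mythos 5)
Your proof is correct and follows essentially the same route as the paper: the paper uses a single binary predicate $P(x,y)$ over {\sf Succ} with axioms $P(\overline{i},\overline{n})$ for $n\in W_i$, verifies {\sf REW} via the standard expansion, and saturates to the complete extension ${\sf Succ}+\forall x\forall y\,P(x,y)$ via the {\L}o\'{s}--Vaught test, exactly as you do with your family of unary predicates $P_i$. The difference between one binary predicate and countably many monadic ones is purely cosmetic.
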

\begin{proof}\label{}
Define the theory $T$ as follows. The language of $T$ consists of the language of {\sf Succ} plus a new binary predicate $P(x,y)$. Let $T: ={\sf Succ} + \{P(\overline{i},\overline{n}): n\in W_i\}$.  We first show that all RE sets are weakly representable in $T$.

Let $A=W_i$ be an RE set. By a simple model theoretic argument, we can show that $n\in A \Leftrightarrow T\vdash P(\overline{i},\overline{n})$. It suffices to show that if $T\vdash P(\overline{i},\overline{n})$, then $n\in A$. Suppose $T\vdash P(\overline{i},\overline{n})$, but $n\notin A$. Suppose $\{n\}=W_j$. Then $T\vdash P(\overline{i},\overline{n})$ since $n\in W_j$. Let $\mathcal{N}=\langle\mathbb{N}, 0,S\rangle$ be the standard model of {\sf Succ} where $S$ is the successor function on $\mathbb{N}$.
Define $P^{\mathcal{N}}(i,n) \Leftrightarrow n\in W_i$. Note that $\mathcal{N}\models T$. Since $\mathcal{N}\models P(\overline{i},\overline{n})\wedge P(\overline{j},\overline{n})$, we have $n\in W_i \cap W_j$ which is a contradiction.

Define the theory $S:={\sf Succ} + \forall x \forall y P(x,y)$. Note that $S$ is a consistent RE extension of $T$. Note that $S$ has only infinite models and all models of $S$ with uncountable cardinality $\kappa$ are isomorphic by Fact \ref{fact on Succ}. By Theorem \ref{Vaught test}, $S$ is complete. By Fact \ref{EU CT},  $T$ is not {\sf EU}.
\end{proof}

The following theorem is a summary of the relationships between {\sf Rosser} theories and the other properties following `{\sf Rosser}' in Remark \ref{convention}.

\begin{theorem}\label{thm on Rosser}
\begin{enumerate}[(1)]
  \item {\sf Rosser} $\Rightarrow {\sf EI}\Rightarrow {\sf RI} \Rightarrow {\sf EU}$.
  \item The property {\sf EI} does not imply {\sf Rosser}.
  \item The property {\sf RI} does not imply {\sf Rosser}.
  \item The property {\sf EU} does not imply {\sf Rosser}.
  \item {\sf Rosser} implies {\sf TP}.
  \item The property {\sf TP}  does not imply {\sf Rosser}.
  \item {\sf Rosser} does not imply {\sf EHU}.
  \item The property {\sf EHU} does not imply {\sf Rosser}.
\item {\sf Rosser} implies {\sf Creative}.
  \item {\sf Creative} does not imply {\sf Rosser}.
  \item {\sf Rosser} implies $\mathbf{0}^{\prime}$.
  \item The property $\mathbf{0}^{\prime}$ does not imply {\sf Rosser}.
  \item The property {\sf REW} does not imply {\sf Rosser}.
  \item {\sf Rosser} does not imply  {\sf RFD}.
  \item {\sf Rosser} implies  {\sf RSS}.
  \item {\sf Rosser} implies  {\sf RSW}.
  \item Any one of {\sf RFD}, {\sf RSS} and {\sf RSW} does not imply {\sf Rosser}.
\end{enumerate}
\end{theorem}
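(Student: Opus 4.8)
The plan is to observe that all sixteen items reduce to the implication chain of Theorem \ref{relation about EI} together with the representability Lemmas \ref{RFD imply RSS} and \ref{Rosser implies RSS} and the counterexamples already built, with the single genuine exception of item (14). For the \emph{positive} implications: item (1) is Theorem \ref{relation about EI}(1)--(3); item (9) combines (1) with Theorem \ref{relation about EI}(4); item (11) combines (9) with Theorem \ref{relation about EI}(5); item (15) is Lemma \ref{Rosser implies RSS}(2); and item (16) combines (15) with Lemma \ref{RFD imply RSS}(2). The uniform device for the \emph{non-implications into {\sf Rosser}} is that {\sf Rosser} $\Rightarrow$ {\sf EU} by (1) and {\sf Rosser} $\Rightarrow \mathbf{0}^{\prime}$ by (11), so any theory with a property $P$ that fails {\sf EU}, or has degree $\neq \mathbf{0}^{\prime}$, cannot be {\sf Rosser}.

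Concretely: the {\sf EI} witness $T$ of Theorem \ref{EI does not imply REW} is {\sf EI}, hence {\sf RI} and {\sf EU}, yet not {\sf Rosser}, which settles (2), (3) and (4) at once. The {\sf Creative} non-{\sf EU} theory of Theorem \ref{creative not imply EU} disposes of (10), and since it is {\sf Creative} it has degree $\mathbf{0}^{\prime}$, so the same theory disposes of (12); the {\sf REW} non-{\sf EU} theory of Theorem \ref{REW not imply EU} disposes of (13). Using {\sf Rosser} $\Rightarrow \mathbf{0}^{\prime}$, any {\sf TP} (resp.\ {\sf EHU}) theory of a non-recursive RE degree $\mathbf{d}\neq\mathbf{0}^{\prime}$ (such degrees exist and are furnished by Theorem \ref{thm on TP original}, resp.\ Theorem \ref{EHU degree}) refutes (6) and (8). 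For (7), Putnam's theory {\sf E} of Example \ref{Putnam E} is {\sf Rosser} yet has the decidable sub-theory {\sf D} over the same language, so {\sf E} is not {\sf HU}, hence not {\sf EHU}. Finally (17): the theory of Theorem \ref{Shoefield theory} is {\sf RFD}, hence {\sf RSS} and {\sf RSW} by Lemma \ref{RFD imply RSS}, but not {\sf Creative}, so by (9) it is not {\sf Rosser}.

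Item (5), {\sf Rosser} $\Rightarrow$ {\sf TP}, needs a short argument modelled on the {\sf TP} part of Proposition \ref{coro of R-like}. The key remark is that {\sf Rosser} is \emph{inherited by consistent RE extensions}: if $\phi(x)$ separates a pair $(A,B)$ in $T$ and $S$ is a consistent RE extension of $T$, the same $\phi$ separates $(A,B)$ in $S$. Hence every consistent RE extension $S$ of a {\sf Rosser} theory $T$ is itself {\sf Rosser}, so {\sf Creative} by (9) and of Turing degree $\mathbf{0}^{\prime}$ by Theorem \ref{relation about EI}(5); since $T$ too has degree $\mathbf{0}^{\prime}$ we get $T {\sf \leq_T} S$, and $T$ is undecidable, so $T$ is {\sf TP}.

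The main obstacle is item (14): exhibiting a {\sf Rosser} theory that is not {\sf RFD}. The plan is to retain the separating power of Putnam's monadic-predicate construction while working in a language that contains $\mathbf{0}$ and $\mathbf{S}$ but no arithmetical operations --- for instance the theory obtained by adjoining to Putnam's axioms of Example \ref{Putnam E} the successor axioms $\mathbf{Q}_1, \mathbf{Q}_2$, so that the numerals $\overline{n}=\mathbf{S}^n\mathbf{0}$ are provably distinct and Putnam's argument still shows the theory is {\sf Rosser}. The hard step, which I expect to absorb most of the work, is to prove that this theory is not {\sf RFD} by showing that some fixed recursive function --- addition $f(a,b)=a+b$ is the natural candidate --- is not definable: the requirement $T \vdash \forall y[\phi(\overline{a},\overline{b},y)\leftrightarrow y=\overline{a+b}]$ demands a uniqueness statement that the weak axioms cannot establish, so one must build, via compactness or an automorphism of a model with a nonstandard $\mathbb{Z}$-chain along which the monadic predicates are constant, a model in which $\phi(\overline{a},\overline{b},y)$ holds for an unintended $y$. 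The subtlety to watch is that the monadic predicates must be shown not to secretly encode the graph of $f$; since they are unary and constrained only on the standard numerals this should go through, but it is precisely the point at which the argument must be made rigorous.
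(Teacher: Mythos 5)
Your treatment of items (1)--(13) and (15)--(17) coincides with the paper's proof: the positive implications come from Theorem \ref{relation about EI} and Lemmas \ref{RFD imply RSS} and \ref{Rosser implies RSS}; the non-implications into {\sf Rosser} are routed through the two necessary conditions {\sf EU} and $\mathbf{0}^{\prime}$ exactly as the paper does, using the same witnesses (the {\sf EI} non-{\sf RSW} theory of Theorem \ref{EI does not imply REW} for (2)--(4), the degree arguments via Theorems \ref{thm on TP original} and \ref{EHU degree} for (6) and (8), the {\sf Creative} and {\sf REW} non-{\sf EU} theories for (10), (12), (13), Shoenfield's theory for (17), and Putnam's {\sf E} for (7)). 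Your argument for (5) is a correct and slightly more explicit version of the paper's one-line observation that every consistent RE extension of a {\sf Rosser} theory is again {\sf Rosser} and hence of degree $\mathbf{0}^{\prime}$.

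The one genuine divergence is item (14), and there your proposal has a gap. The paper settles (14) trivially: Putnam's {\sf E} itself is the witness, and it is not {\sf RFD} simply because its language has only relation symbols and individual constants and no function symbols at all (the ``numerals'' of {\sf E} are primitive constants, not terms $\mathbf{S}^{n}\mathbf{0}$), so the definability clause of Definition \ref{property of theory} cannot be satisfied. You instead propose to build a {\sf Rosser} theory with genuine $\mathbf{0},\mathbf{S}$-numerals and prove that addition is not definable in it, but you leave the non-definability step as a plan. As sketched, the automorphism idea does not yet yield a contradiction: the definability requirement $T\vdash\forall y[\phi(\overline{a},\overline{b},y)\leftrightarrow y=\overline{a+b}]$ already forces $\phi(\overline{a},\overline{b},\cdot)$ to fail at every nonstandard point of every model, and an automorphism fixing the standard part pointwise cannot disturb that; what actually has to be refuted is the \emph{uniformity} in $a,b$ of a single formula $\phi$, which would need something like quantifier elimination for the successor fragment or an indiscernibility argument over the numerals not mentioned in $\phi$. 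That can surely be done, but it is real work that your proposal does not carry out, whereas the intended proof requires none of it. If you want to keep your route, you must supply that argument; otherwise, observe that {\sf E} fails {\sf RFD} for the definitional reason above and item (14) is immediate.
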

\begin{proof}\label{}
\begin{enumerate}[(1)]
  \item Follows from Theorem \ref{relation about EI}.
  \item Follows from Theorem \ref{EI does not imply REW}.
  \item Follows from (2) since {\sf EI} implies {\sf RI}.
  \item Follows from (2) since {\sf EI} implies {\sf EU}.
  \item Any consistent RE extension of a {\sf Rosser} theory is {\sf Rosser} and hence has  Turing degree $\mathbf{0}^{\prime}$.
  \item From Theorem \ref{thm on TP original}, a {\sf TP} theory may have Turing degree less than $\mathbf{0}^{\prime}$. But {\sf Rosser} theories have Turing degree $\mathbf{0}^{\prime}$.
\item  The theory {\sf E} in Example \ref{Putnam E}  is {\sf Rosser}, but it has a decidable sub-theory and hence is not {\sf EHU}.
  \item From Theorem \ref{EHU degree}, {\sf EHU} theories can have any  non-zero RE Turing degree. From Theorem \ref{relation about EI}, {\sf Rosser} theories have  Turing degree $\mathbf{0}^{\prime}$.
      \item From Theorem \ref{relation about EI}, {\sf Rosser} $\Rightarrow {\sf EI} \Rightarrow$ {\sf Creative}.
  \item From Theorem \ref{creative not imply EU}, {\sf Creative} does not imply {\sf EU}.  On the other hand, {\sf Rosser} implies {\sf EU}.
      \item From Theorem \ref{relation about EI}, {\sf Rosser} implies {\sf Creative} and {\sf Creative} implies $\mathbf{0}^{\prime}$.
  \item From Theorem \ref{creative not imply EU}, {\sf Creative} does not imply {\sf EU}.  On the other hand, {\sf Creative} implies $\mathbf{0}^{\prime}$ and {\sf Rosser} implies  {\sf EU}.
  \item  From Theorem \ref{REW not imply EU},  {\sf REW} does not imply {\sf EU}. On the other hand, {\sf Rosser} implies  {\sf EU}.
  \item Putnam's theory {\sf E} in Example \ref{Putnam E} is {\sf Rosser} but it is not {\sf RFD}.
  \item Follows from Lemma \ref{Rosser implies RSS}.
  \item Note that {\sf Rosser} implies {\sf RSS} and {\sf RSS} implies {\sf RSW}.
  \item From Theorem \ref{Shoefield theory} and Lemma \ref{RFD imply RSS}, any one of {\sf RFD}, {\sf RSS} and {\sf RSW} does not imply {\sf Creative}. On the other hand, {\sf Rosser} implies {\sf Creative}.
\end{enumerate}
\end{proof}

\begin{remark}
In this paper, given two properties $A$ and $B$ in Remark \ref{convention}, if the arrow line from $A$ to $B$ is black, this means that $A$ implies $B$; if the arrow line from $A$ to $B$ is red, this means that $A$ does not imply $B$; if the arrow line from $A$ to $B$ is green, this means that whether $A$ implies $B$ is not answered in this paper.
\end{remark}

The following diagram  is a summary of the relationships between {\sf Rosser} theories and the other properties in Remark \ref{convention}.

\begin{tikzpicture}

\begin{scope}[yshift=-1cm]

\node (1) at(4,2) {\sf Rosser};
\node (2) at(0,4) {\sf EI};
\node (3) at(2,4) {\sf RI};
\node (4) at(4,4) {\sf TP};

\node (5) at(6,4) {\sf EHU};
\node (6) at(8,4) {\sf EU};
\node (7) at(8,2) {\sf Creative};

\node (8) at(8,0) {$\mathbf{0}^{\prime}$};
\node (9) at(6,0) {\sf REW};
\node (10) at(4,0) {\sf RFD};
\node (11) at(2,0) {\sf RSS};

\node (12) at(0,0) {\sf RSW};

\draw[-latex,bend left]  (1) edge (2);
\draw[->, red] (2)-- (1);

\draw[-latex,bend right]  (1) edge (3);
\draw[->, red] (3)-- (1);

\draw[-latex,bend right]  (1) edge (4);
\draw[->, red] (4)--  (1);

\draw[-latex,bend left, red] (1) edge  (5);
\draw[->, red] (5)-- (1);

\draw[-latex,bend left]  (1) edge (6);
\draw[->, red] (6)--  (1);

\draw[-latex,bend left]  (1) edge (7);
\draw[->, red] (7)--  (1);

\draw[-latex,bend left]  (1) edge (8);
\draw[->, red] (8)--   (1);

\draw[-latex,bend left, green]  (1) edge (9);
\draw[->, red]  (9) --  (1);

\draw[-latex,bend left, red]  (1) edge (10);
\draw[->, red] (10)-- (1);

\draw[-latex,bend left]  (1) edge (11);
\draw[->, red] (11)-- (1);

\draw[-latex,bend right]  (1) edge (12);
\draw[->, red] (12)-- (1);
\end{scope}
\end{tikzpicture}

\section{Relationships with {\sf EI}  and {\sf RI} theories}

In this section, we discuss the relationships between {\sf EI} theories as well as {\sf RI} theories and the properties in Remark \ref{convention}.
\smallskip

\begin{definition}[\cite{Murawski}, Definition 2.4.18]~\label{}
We say $X\subseteq \mathbb{N}$  is \emph{universal} for recursive sets if for any recursive set $Y$, there is a recursive function $f$ such that $n\in Y\Leftrightarrow f(n)\in  X$.
\end{definition}
\smallskip

\begin{fact}[\cite{Murawski}, Lemma 2.4.19]~\label{universal recursive set}
There is no recursive set $X\subseteq \mathbb{N}$ that is universal for recursive sets.
\end{fact}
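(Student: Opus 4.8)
The plan is to argue by contradiction via the classical diagonal construction, exploiting the fact that recursiveness is preserved under preimages along recursive functions. Suppose $X\subseteq\mathbb{N}$ is recursive and universal for recursive sets. Fix a recursive pairing function $\langle\cdot,\cdot\rangle$ with recursive projections. The goal is to manufacture, directly from $X$, a recursive set $D$ that no reduction can witness to be captured by $X$, contradicting universality.

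The first step is to put the universality hypothesis into a usable, uniform shape. For each index $e$ the map $n\mapsto\langle e,n\rangle$ is recursive, so the section $X_e:=\{n:\langle e,n\rangle\in X\}$ is recursive, being the preimage of the recursive set $X$ under a recursive function. I would argue that universality forces the family $\{X_e:e\in\mathbb{N}\}$ to exhaust all recursive sets: given a recursive set $Y$, universality supplies a recursive reduction $f$ with $n\in Y\Leftrightarrow f(n)\in X$, and one reads off (after arranging the reductions through the canonical section maps) an index $e$ with $Y=X_e$. Converting the a~priori non-uniform reduction in the definition into membership in a single two-place recursive coding is the delicate bookkeeping step; everything below rests on it.

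With this in place, define the diagonal set $D:=\{n\in\mathbb{N}:\langle n,n\rangle\notin X\}$. Since $X$ is recursive and the pairing together with its projections are recursive, the characteristic function of $D$ is recursive, so $D$ is a recursive set. By the reformulated universality there must be some $e_0$ with $D=X_{e_0}$. Evaluating the definitions at the point $e_0$ yields $e_0\in D\Leftrightarrow\langle e_0,e_0\rangle\notin X\Leftrightarrow e_0\notin X_{e_0}=D$, which is absurd. Hence no recursive $X$ can be universal for recursive sets.

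I expect the only genuine obstacle to be the translation in the first step, namely extracting from the (per-set, possibly non-uniform) reductions of the definition a single recursive enumeration of candidate recursive sets against which the diagonal bites; the diagonal step itself and the verification that $D$ is recursive are then immediate. Once the universality hypothesis is recast so that every recursive set appears among a recursively indexed family of sections of $X$, the argument is exactly the standard proof that the class of recursive sets admits no recursive universal member.
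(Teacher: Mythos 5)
The paper does not actually prove this Fact (it is imported from Murawski), so your argument has to stand on its own. The diagonal core of your argument is the standard and correct one, but the step you defer as ``delicate bookkeeping'' is where the proof genuinely breaks, and it cannot be repaired from the definition of universality as stated in the paper. That definition only demands, for each recursive $Y$ separately, \emph{some} recursive reduction $f$ with $n\in Y\Leftrightarrow f(n)\in X$; such an $f$ need not be a section map $n\mapsto\langle e,n\rangle$, and there is no general way to trade an arbitrary reduction for one. Worse, under this non-uniform reading the sections $X_e$ can be trivial while $X$ is still ``universal'': for any recursive $X$ with $\emptyset\neq X\neq\mathbb{N}$, pick $a\in X$ and $b\notin X$ and, for a given recursive $Y$, set $f(n)=a$ if $n\in Y$ and $f(n)=b$ otherwise; this $f$ is recursive precisely because $Y$ is, and it witnesses the reduction. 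So \emph{every} nontrivial recursive set is universal in the literal sense of the paper's definition, your claimed exhaustion of the recursive sets by the sections $X_e$ is false in general, and the Fact itself fails under that literal reading.

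What makes the statement true --- and what your diagonal actually needs --- is the uniform notion that Murawski's lemma and the application in Theorem \ref{RSS imply RI} really use: a single recursive function $F(e,n)$ (equivalently, a recursive binary relation) such that every recursive set equals $\{n: F(e,n)\in X\}$ for some index $e$. With that hypothesis your argument goes through verbatim with $D=\{n: F(n,n)\notin X\}$: $D$ is recursive, hence $D=\{n:F(e_0,n)\in X\}$ for some $e_0$, and evaluating at $e_0$ gives a contradiction. In the intended application the uniformity is available for free, since the reduction attached to a strongly representing formula $\phi$ is $n\mapsto\ulcorner\phi(\overline{n})\urcorner$, which is recursive uniformly in $\ulcorner\phi\urcorner$. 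As written, however, your first step asserts something that does not follow from --- and is in fact refuted by --- the stated hypothesis, so the proof as proposed is not correct without first replacing the paper's definition of universality by its uniform version.
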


\begin{theorem}\label{RSS imply RI}
Let $T$ be a consistent RE theory. If $T$ is {\sf RSS}, then $T$ is {\sf RI}.
\end{theorem}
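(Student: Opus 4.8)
The plan is to argue by contradiction and to play the hypothesis {\sf RSS} against Fact \ref{universal recursive set}, which asserts that no recursive set is universal for recursive sets. So I would suppose that $T$ is {\sf RSS} but \emph{not} {\sf RI}, and aim to manufacture from this data a single recursive set to which every recursive set many-one reduces.

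First I would unpack the failure of {\sf RI}. By the definition in Definition \ref{def of 1.2}, if $(T_P,T_R)$ is not recursively inseparable, then there is a recursive set $X\subseteq\mathbb{N}$ with $T_P\subseteq X$ and $X\cap T_R=\emptyset$. This $X$ is the candidate universal set. Next I would take an \emph{arbitrary} recursive set $A$ and use {\sf RSS} to obtain an $L(T)$-formula $\phi(x)$ that strongly represents $A$ in $T$; that is, $n\in A\Rightarrow T\vdash\phi(\overline{n})$ and $n\notin A\Rightarrow T\vdash\neg\phi(\overline{n})$. I then set $f(n)=\ulcorner\phi(\overline{n})\urcorner$, which is recursive because the G\"{o}del numbering and numeral substitution are recursive.

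The key step is the verification that $f$ reduces $A$ to $X$. If $n\in A$, then $T\vdash\phi(\overline{n})$, so $f(n)\in T_P\subseteq X$. If $n\notin A$, then $T\vdash\neg\phi(\overline{n})$, so $f(n)\in T_R$, and since $X\cap T_R=\emptyset$ we get $f(n)\notin X$. Hence $n\in A\Leftrightarrow f(n)\in X$. As $A$ was an arbitrary recursive set, this exhibits the recursive set $X$ as universal for recursive sets, contradicting Fact \ref{universal recursive set}. Therefore $T$ must be {\sf RI}.

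I do not expect a genuine obstacle here; the argument is short. The one point worth flagging is that the passage $A\mapsto\phi$ need not be given by any effective procedure, but this is harmless: the definition of ``universal for recursive sets'' only requires that for \emph{each} recursive set there \emph{exist} some recursive reduction to $X$, and that is exactly what the construction of $f$ provides for each fixed $A$. The only routine care needed is in confirming the recursiveness of $f$, which follows from the standing arithmetization assumptions.
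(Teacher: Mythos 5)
Your argument is correct and is essentially identical to the paper's own proof: both derive a recursive separating set $X$ from the failure of {\sf RI}, use {\sf RSS} to reduce an arbitrary recursive set to $X$ via $n\mapsto\ulcorner\phi(\overline{n})\urcorner$, and contradict Fact \ref{universal recursive set}. Your remark that the reduction need not be uniform in the recursive set is a correct and worthwhile observation, but no change to the argument is needed.
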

\begin{proof}\label{}
Suppose $T$ is {\sf RSS} but not {\sf RI}, i.e., there exists  a recursive set $X$ such that $T_P\subseteq X$ and $X\cap T_R=\emptyset$.
Now we show that $X$ is universal for the class of recursive sets. Let $Y$ be any recursive set.
Let $\phi(x)$ be a formula that strongly represents the recursive set $Y$ in $T$: if $n\in Y$, then $T\vdash \phi(\overline{n})$; and if $n\notin Y$, then $T\vdash \neg\phi(\overline{n})$. Define the function $f: n\mapsto \ulcorner \phi(\overline{n})\urcorner$. Note that $f$ is recursive. If $n\in Y$, then $f(n)\in T_P\subseteq X$. We show that if $f(n)\in X$, then $n\in Y$. Suppose $f(n)\in X$ but $n\notin Y$. Then $f(n)\in T_R\cap X$ which is a contradiction. Thus $X$ is universal for  recursive sets which contradicts Fact \ref{universal recursive set}.
\end{proof}

The following theorem is a summary of the relationships between {\sf EI} theories and the other properties following `{\sf EI}' in Remark \ref{convention}.

\begin{theorem}\label{thm on EI}
\begin{enumerate}[(1)]
  \item The property {\sf EI} implies  {\sf RI} and {\sf RI} implies  {\sf EU}.
  \item The property {\sf RI} does not imply {\sf EI}.
  \item The property {\sf EI} implies {\sf TP}.
  \item The property {\sf TP}  does not imply {\sf EI}.
  \item The property {\sf EI} does not imply {\sf EHU}.
  \item The property {\sf EHU} does not imply {\sf EI}.
  \item The property {\sf EU}  does not imply {\sf EI}.

  \item The property {\sf EI} implies {\sf Creative}.
  \item {\sf Creative} does not imply {\sf EI}.

  \item The property {\sf EI} implies $\mathbf{0}^{\prime}$.
  \item The property $\mathbf{0}^{\prime}$ does not imply {\sf EI}.

  \item  The property {\sf EI} does not imply {\sf REW}.
  \item The property {\sf REW} does not imply {\sf EI}.

  \item The property {\sf EI} does not imply  any one of {\sf RFD}, {\sf RSS} and {\sf RSW}.
  \item None of {\sf RFD}, {\sf RSS} and {\sf RSW} implies  {\sf EI}.
\end{enumerate}
\end{theorem}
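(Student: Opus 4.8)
The plan is to read off most of the fifteen items from the implication spine already established, using two features of {\sf EI} theories as the principal obstructions: every {\sf EI} theory is {\sf EU} (via Theorem \ref{relation about EI}(2),(3)) and every {\sf EI} theory has Turing degree $\mathbf{0}^{\prime}$ (via Theorem \ref{relation about EI}(4),(5)). Consequently, any theory that fails to be {\sf EU}, or whose Turing degree is not $\mathbf{0}^{\prime}$, cannot be {\sf EI}, and these two facts will dispatch almost all of the negative items.

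First the positive implications. Items (1), (8) and (10) are immediate restatements of Theorem \ref{relation about EI}. For item (3), that {\sf EI} implies {\sf TP}, I would first note that {\sf EI} gives {\sf EU} and hence undecidability (take the extension to be $T$ itself). The key observation is that any consistent RE extension $S$ of an {\sf EI} theory $T$ is again {\sf EI}: if $f$ witnesses that $(T_P,T_R)$ is {\sf EI} and $S_P\subseteq W_i$, $S_R\subseteq W_j$ with $W_i\cap W_j=\emptyset$, then $T_P\subseteq W_i$ and $T_R\subseteq W_j$ (using $T_P\subseteq S_P$, $T_R\subseteq S_R$), so $f(i,j)\notin W_i\cup W_j$; thus the same $f$ witnesses that $(S_P,S_R)$ is {\sf EI}. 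By Theorem \ref{relation about EI}(4),(5), $S$ then has Turing degree $\mathbf{0}^{\prime}$, and so does $T$; hence $T$ and $S$ are Turing equivalent and in particular $T{\sf \leq_T}S$, giving {\sf TP}.

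For the non-implications with {\sf EI} as antecedent: item (5) follows because Putnam's theory {\sf E} of Example \ref{Putnam E} is {\sf Rosser}, hence {\sf EI}, yet has a decidable sub-theory, so it is not {\sf HU} and hence not {\sf EHU}; items (12) and (14) are exactly Theorem \ref{EI does not imply REW}(2),(3),(4),(5). For the non-implications with {\sf EI} as consequent I would apply the two obstructions above. Using that {\sf EI} implies {\sf EU}: item (9) is witnessed by the {\sf Creative} but non-{\sf EU} theory of Theorem \ref{creative not imply EU}, and item (13) by the {\sf REW} but non-{\sf EU} theory of Theorem \ref{REW not imply EU}. Using that {\sf EI} forces degree $\mathbf{0}^{\prime}$: fixing any non-recursive RE degree $\mathbf{d}<\mathbf{0}^{\prime}$ (which exists by the Friedberg--Muchnik theorem), item (2) is witnessed by the {\sf RI} theory $T_{\mathbf{d}}$ of Theorem \ref{thm on TP original}, which simultaneously handles item (4) since $T_{\mathbf{d}}$ is {\sf TP}; item (6) by the {\sf EHU} theory of degree $\mathbf{d}$ from Theorem \ref{EHU degree}; item (7) by the {\sf EU} theory of degree $\mathbf{d}$ from Theorem \ref{Shoenfield EU}; and item (15) by Shoenfield's theory of Theorem \ref{Shoefield theory}, which is {\sf RFD} (hence {\sf RSS} and {\sf RSW} by Lemma \ref{RFD imply RSS}) and has degree below $\mathbf{0}^{\prime}$. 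Item (11) is the one separation requiring degree exactly $\mathbf{0}^{\prime}$: here the {\sf Creative} non-{\sf EU} theory of Theorem \ref{creative not imply EU} serves, since being {\sf Creative} it has degree $\mathbf{0}^{\prime}$ by Theorem \ref{relation about EI}(5), yet it is not {\sf EU} and so not {\sf EI}.

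I expect the main obstacle to be item (3): unlike the other positive items it is not a one-line citation, and it hinges on the closure observation that consistent RE extensions of {\sf EI} theories stay {\sf EI}, which is precisely what upgrades "{\sf EI}" to the degree equality needed for Turing persistence. A secondary point to get right is the bookkeeping for item (11), where I must confirm that the chosen witness genuinely sits at degree $\mathbf{0}^{\prime}$ while failing {\sf EU}, rather than merely having degree below $\mathbf{0}^{\prime}$ as in the other separations.
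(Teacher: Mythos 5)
Your proposal is correct and follows essentially the same route as the paper: positive items from the implication spine of Theorem \ref{relation about EI} (with item (3) via the observation that consistent RE extensions of {\sf EI} theories remain {\sf EI} and hence have degree $\mathbf{0}^{\prime}$), and negative items by exhibiting known witnesses that fail either {\sf EU} or degree $\mathbf{0}^{\prime}$. The only differences are cosmetic choices of witness (e.g.\ for items (2) and (7) the paper uses Shoenfield's non-{\sf Creative} {\sf RFD} theory via the creativity obstruction where you use the degree obstruction with $T_{\mathbf{d}}$ and the Shoenfield degree theorem); both work.
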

\begin{proof}\label{}
\begin{enumerate}[(1)]
  \item Follows from Theorem \ref{relation about EI}.
  \item From Theorem \ref{Shoefield theory}, {\sf RFD} does not imply {\sf Creative}. From Lemma \ref{RFD imply RSS} and Theorem \ref{RSS imply RI},
      {\sf RFD} implies {\sf RI}. But {\sf EI} implies {\sf Creative}.
  \item Any consistent RE extension of an {\sf EI} theory has Turing degree $\mathbf{0}^{\prime}$.
  \item  A {\sf TP} theory may have Turing degree less than $\mathbf{0}^{\prime}$ from Theorem \ref{thm on TP original}. But {\sf EI} theories have  Turing degree $\mathbf{0}^{\prime}$.
      \item Note that {\sf EI} does not imply {\sf HU} since the theory $T$ in Theorem \ref{EI not R}(3) or the theory {\sf E} in Example \ref{Putnam E} is {\sf EI} but it has a decidable sub-theory and hence is not {\sf EHU}.
  \item From Theorem \ref{EHU degree}, {\sf EHU} theories can have any non-recursive RE  Turing degree. From Theorem \ref{relation about EI}, {\sf EI} theories have  Turing degree $\mathbf{0}^{\prime}$.
      \item From Theorem \ref{Shoefield theory} and ${\sf RFD} \Rightarrow {\sf RSS} \Rightarrow {\sf RI}\Rightarrow {\sf EU}$, we have {\sf EU} does not imply {\sf Creative}. On the other hand, {\sf EI} implies {\sf Creative}. Thus, {\sf EU} does not imply {\sf EI}.
  \item Follows from Theorem \ref{relation about EI}.
  \item From Theorem \ref{creative not imply EU}, {\sf Creative} does not imply {\sf EU}.  On the other hand,  {\sf EI} implies {\sf EU}.
  \item Follows from Theorem \ref{relation about EI}.
\item   From Theorem \ref{creative not imply EU}, {\sf Creative} does not imply {\sf EU}.  On the other hand, {\sf Creative} implies $\mathbf{0}^{\prime}$ and {\sf EI} implies {\sf EU}.
    \item Follows from Theorem \ref{EI does not imply REW}.
  \item From Theorem \ref{REW not imply EU},  {\sf REW} does not imply {\sf EU}. On the other hand,  {\sf EI} implies {\sf EU}.
  \item Follows from Theorem \ref{EI does not imply REW}.
  \item From Theorem \ref{Shoefield theory} and Lemma \ref{RFD imply RSS}, none of {\sf RFD}, {\sf RSS} and {\sf RSW} implies {\sf Creative}. On the other hand, {\sf EI} implies {\sf Creative}.
\end{enumerate}
\end{proof}

The following diagram  is a summary of the relationships between {\sf EI} theories and the other properties in Remark \ref{convention}.\smallskip

\begin{tikzpicture}

\begin{scope}[yshift=-1cm]

\node (1) at(4,2) {\sf EI};
\node (2) at(0,4) {\sf Rosser};
\node (3) at(2,4) {\sf RI};
\node (4) at(4,4) {\sf TP};

\node (5) at(6,4) {\sf EHU};
\node (6) at(8,4) {\sf EU};
\node (7) at(8,2) {\sf Creative};

\node (8) at(8,0) {$\mathbf{0}^{\prime}$};
\node (9) at(6,0) {\sf REW};
\node (10) at(4,0) {\sf RFD};
\node (11) at(2,0) {\sf RSS};

\node (12) at(0,0) {\sf RSW};

\draw[->, red] (1)-- (2);
\draw[-latex,bend right]  (2) edge (1);

\draw[-latex,bend left]  (1) edge (3);
\draw[->, red] (3)-- (1);

\draw[-latex,bend left]  (1) edge (4);
\draw[->, red] (4)--  (1);

\draw[-latex,bend left, red] (1) edge  (5);
\draw[->, red] (5)-- (1);

\draw[-latex,bend left]  (1) edge (6);
\draw[->, red] (6)--  (1);

\draw[-latex,bend left]  (1) edge (7);
\draw[->, red] (7)--  (1);

\draw[-latex,bend left]  (1) edge (8);
\draw[->, red] (8)--   (1);

\draw[-latex,bend left, red]  (1) edge (9);
\draw[->, red]  (9) --  (1);

\draw[-latex,bend left, red]  (1) edge (10);
\draw[->, red] (10)-- (1);

\draw[-latex,bend left, red]  (1) edge (11);
\draw[->, red] (11)-- (1);

\draw[-latex,bend right, red]  (1) edge (12);
\draw[->, red] (12)-- (1);
\end{scope}
\end{tikzpicture}
\smallskip

\begin{example}[\cite{PV}, Example 4.11]~\label{EU not imply TP}
There exists an {\sf EU} theory which is not {\sf TP}. To see this, suppose $X, Y$ are RE sets such that $X$ has Turing degree $\mathbf{d}$, $Y$ has Turing degree $\mathbf{e}$ and $\mathbf{0}<\mathbf{d}<\mathbf{e}$. Applying Theorem \ref{Shoenfield first} to $X$, let $(B,C)$ be the {\sf RI} pair as in Theorem \ref{Shoenfield first}. Define $T_0:= {\sf J} + \{A_{2n}: n\in B\}+\{\neg A_{2n}: n\in C\}$, $T_1:= T_0 + \{A_{2n+1}: n\in Y\}$. From Theorem \ref{thm on TP original}, $T_0$ is {\sf RI}, Turing persistent and has Turing degree $\mathbf{d}$. Note that $T_1$ is {\sf RI} and hence is {\sf EU}. We show that $T_1$ is not Turing persistent.

Note that $B,C,Y\leq_T T_1$ and $T_1\leq_T B,C,Y$ by an argument similar to the one in Theorem \ref{thm on TP original}. Since $\mathbf{d}<\mathbf{e}$, $T_1$ has  Turing degree $\mathbf{e}$. Let $T_2:= T_1 + \{A_{2n+1}: n\in \omega\}$. Note that $T_2:= {\sf J} + \{A_{2n}: n\in B\}+\{\neg A_{2n}: n\in C\}+ \{A_{2n+1}: n\in \omega\}$. By the similar argument as in Theorem \ref{thm on TP original}, $T_2$ has  Turing degree $\mathbf{d}$. Thus, $T_1$ is not Turing persistent since $T_2$ is a consistent RE extension of $T_1$ but $T_2 {\sf <_T} T_1$.
\end{example}

\begin{example}\label{Ehrenfeucht}
Ehrenfeucht constructs in \cite[pp.18-19]{Ehrenfeucht} an {\sf EU} theory which is not {\sf RI}.
\end{example}

\begin{theorem}\label{EHU not imply RI}
The property {\sf EHU} does not imply {\sf RI}.
\end{theorem}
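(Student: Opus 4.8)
The plan is to exploit the machinery already assembled for Theorem \ref{EHU degree}, namely the finite-axiomatization trick via the \texttt{pere} construction. The first thing to note is an obstruction that rules out the cheapest attempt: {\sf EHU} implies {\sf EU}, since in the definition of {\sf HU} a theory counts as a sub-theory of itself, so every consistent RE extension of an {\sf EHU} theory is in particular undecidable. Hence I cannot merely invoke Example \ref{Ehrenfeucht} (an {\sf EU} theory that is not {\sf RI}) and be done, because being {\sf EU} is strictly weaker than being {\sf EHU}. The real content is to upgrade such a theory to a genuinely {\sf EHU} one while keeping the failure of {\sf RI} intact.

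Concretely, I would start from Ehrenfeucht's theory $E$ of Example \ref{Ehrenfeucht}, which is {\sf EU} but not {\sf RI}. Fixing an index $i$ for $E$, Theorem \ref{RBM key thm} produces a finitely axiomatised theory $S := pere(i)$ together with a Boolean recursive isomorphism $\Phi$ between $E$ and $S$. Since Boolean recursive isomorphisms preserve essential undecidability (established earlier in the paper), $S$ is {\sf EU}; being also finitely axiomatised, Theorem \ref{Thm on EHU} then upgrades $S$ to {\sf EHU}. This is exactly the route taken in the proof of Theorem \ref{EHU degree}, so this half should go through with no new difficulty.

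It remains to transfer the \emph{failure} of {\sf RI} from $E$ to $S$. Here I use that Boolean recursive isomorphisms preserve {\sf RI} (again from the earlier preservation theorem) together with the symmetry of the relation: $\Phi$ is a recursive bijection, so $\Phi^{-1}$ is recursive as well and witnesses that $S$ is Boolean recursively isomorphic to $E$. Consequently $S$ is {\sf RI} if and only if $E$ is {\sf RI}; since $E$ is not {\sf RI}, neither is $S$. Thus $S$ is {\sf EHU} but not {\sf RI}, which establishes that {\sf EHU} does not imply {\sf RI}.

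The main obstacle I anticipate is not computational but conceptual: one must resist the temptation to quote Example \ref{Ehrenfeucht} directly, recognising that the jump from {\sf EU} to {\sf EHU} genuinely requires the finite-axiomatization detour, and one must be careful to invoke preservation of {\sf RI} in the correct direction, relying on the two-sided (symmetric) nature of the Boolean recursive isomorphism. Provided the preservation of {\sf RI} under Boolean recursive isomorphisms is as stated, the argument is short and essentially parallels the proof of Theorem \ref{EHU degree}.
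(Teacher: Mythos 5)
Your proposal is correct and follows essentially the same route as the paper's own proof: take Ehrenfeucht's {\sf EU}-but-not-{\sf RI} theory, pass to the finitely axiomatised $pere$ image to obtain an {\sf EHU} theory via Theorem \ref{Thm on EHU}, and transfer the failure of {\sf RI} using the fact that Boolean recursive isomorphisms preserve {\sf RI}. Your extra care about the symmetry of the isomorphism when invoking preservation in the needed direction is a sound (and slightly more explicit) rendering of the same argument.
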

\begin{proof}\label{}
From Example \ref{Ehrenfeucht}, take an {\sf EU} theory $T$ which is not {\sf RI}. Let $S:=pere(T)$ as in Theorem \ref{RBM key thm}. From Theorem \ref{RBM key thm} and Theorem \ref{Thm on EHU}, $S$ is {\sf EHU}.  Since $S$ is Boolean recursively isomorphic to $T$ and Boolean recursive isomorphisms preserve {\sf RI} theories, $S$  is not {\sf RI}.
\end{proof}

The following theorem is a summary of the relationships between {\sf RI} theories and the other properties following `{\sf RI}' in Remark \ref{convention}.

\begin{theorem}\label{thm on RI}
\begin{enumerate}[(1)]
\item  The property {\sf RI} does not imply {\sf TP}.

\item The property {\sf RI} does not imply {\sf EHU}.
\item The property {\sf EHU} does not imply {\sf RI}.

\item The property {\sf RI} implies {\sf EU}.
\item The property {\sf EU} does not imply ${\sf RI}$.

  \item The property {\sf RI} does not imply {\sf Creative}.
  \item  {\sf Creative} does not imply {\sf RI}.

  \item The property {\sf RI} does not imply $\mathbf{0}^{\prime}$.
  \item The property  $\mathbf{0}^{\prime}$ does not imply {\sf RI}.

\item The property {\sf RI} does not imply {\sf REW}.
\item The property {\sf REW} does not imply {\sf RI}.

\item The property {\sf RI} does not imply any one of {\sf RFD}, {\sf RSS} and {\sf RSW}.
\item The property {\sf RSS} implies {\sf RI}.
  \item The property {\sf RFD} implies {\sf RI}.
  \item The property {\sf RSW} does not imply {\sf RI}.
\end{enumerate}
\end{theorem}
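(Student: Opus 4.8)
The plan is to treat Theorem \ref{thm on RI} as a bookkeeping exercise: each of the fifteen items reduces to a result already proved in the excerpt, together with three structural facts I will invoke repeatedly --- the chain ${\sf EI}\Rightarrow{\sf RI}\Rightarrow{\sf EU}$ and the implication ${\sf Creative}\Rightarrow\mathbf{0}^{\prime}$ from Theorem \ref{relation about EI}, the chain ${\sf RFD}\Rightarrow{\sf RSS}\Rightarrow{\sf RSW}$ from Lemma \ref{RFD imply RSS}, and ${\sf RSS}\Rightarrow{\sf RI}$ from Theorem \ref{RSS imply RI}. No new construction is required; the task is to select the correct witness for each non-implication and keep the implication arrows pointing the right way.

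The three positive implications are immediate. Item (4) is Theorem \ref{relation about EI}(3); item (13) is Theorem \ref{RSS imply RI}; and item (14) is the composite of ${\sf RFD}\Rightarrow{\sf RSS}$ (Lemma \ref{RFD imply RSS}(1)) with ${\sf RSS}\Rightarrow{\sf RI}$ (Theorem \ref{RSS imply RI}).

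For the non-implications in which {\sf RI} is the hypothesis I would reuse the Janiczak- and ${\sf Succ^{-}}$-based theories built earlier. For (1), the theory $T_1$ of Example \ref{EU not imply TP} is {\sf RI} but not {\sf TP}. For (2), the {\sf RI} theory $T_{\mathbf{d}}$ of Theorem \ref{thm on TP original} contains the decidable sub-theory {\sf J}, hence is not {\sf HU} and so fails {\sf EHU}. For (8), Theorem \ref{thm on TP original} supplies an {\sf RI} theory of any prescribed non-recursive RE degree $\mathbf{d}<\mathbf{0}^{\prime}$, so {\sf RI} does not force degree $\mathbf{0}^{\prime}$. For (6), Theorem \ref{Shoefield theory} gives a theory that is {\sf RFD} --- hence {\sf RSS}, hence {\sf RI} --- yet not {\sf Creative}. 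Finally (10) and (12) both use the {\sf EI} (and so {\sf RI}) theory of Theorem \ref{EI does not imply REW} built over ${\sf Succ^{-}}$: by Theorem \ref{thm on Succ} its weakly representable sets are exactly the finite and cofinite ones, so it fails {\sf RSW}, whence ${\sf RFD}\Rightarrow{\sf RSS}\Rightarrow{\sf RSW}$ and ${\sf REW}\Rightarrow{\sf RSW}$ show that it fails {\sf RFD}, {\sf RSS} and {\sf REW} as well.

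The part requiring the most care, and the one I would state most explicitly, is the cluster of non-implications in which {\sf RI} is the conclusion: items (3), (5), (7), (9), (11) and (15). The organizing observation is the contrapositive of ${\sf RI}\Rightarrow{\sf EU}$, namely that any theory failing to be essentially undecidable automatically fails {\sf RI}. Thus (5) is witnessed directly by Ehrenfeucht's {\sf EU}-but-not-{\sf RI} theory (Example \ref{Ehrenfeucht}); (7) and (9) both use the {\sf Creative} theory of Theorem \ref{creative not imply EU}, which is not {\sf EU} and hence not {\sf RI}, and which has degree $\mathbf{0}^{\prime}$ because ${\sf Creative}\Rightarrow\mathbf{0}^{\prime}$, thereby settling (9); and (11) together with (15) are witnessed by the {\sf REW} theory of Theorem \ref{REW not imply EU}, which is not {\sf EU} (so not {\sf RI}) and is {\sf RSW} because ${\sf REW}\Rightarrow{\sf RSW}$. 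Item (3) is the one case outside this pattern and is simply quoted from Theorem \ref{EHU not imply RI}. The main risk here is purely clerical: one must verify case by case that each cited witness really is a consistent RE theory of the advertised kind and that the degree- and representability-bookkeeping aligns, so the genuine effort goes into orienting the implications correctly rather than into any single difficult argument.
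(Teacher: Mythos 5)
Your proposal is correct and follows essentially the same route as the paper: the three positive items are cited from Theorem \ref{relation about EI} and Theorem \ref{RSS imply RI}, and each non-implication is witnessed by the same previously constructed theories (Example \ref{EU not imply TP}, Theorems \ref{thm on TP original}, \ref{Shoefield theory}, \ref{EI does not imply REW}, \ref{creative not imply EU}, \ref{REW not imply EU}, \ref{EHU not imply RI}, and Example \ref{Ehrenfeucht}), organized around the contrapositive of ${\sf RI}\Rightarrow{\sf EU}$ exactly as in the paper. The only cosmetic deviation is using $T_{\mathbf{d}}$ from Theorem \ref{thm on TP original} rather than the theory of Theorem \ref{EI not R} or Putnam's ${\sf E}$ as the witness for item (2), which works equally well since all of these are {\sf RI} with a decidable sub-theory.
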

\begin{proof}\label{}
\begin{enumerate}[(1)]
\item Note that the {\sf EU} theory which is not {\sf TP} in Example \ref{EU not imply TP} is also {\sf RI}.
\item Note that {\sf RI} does not imply {\sf HU} since the theory $T$ in Theorem \ref{EI not R}(2) or the theory {\sf E} in Example \ref{Putnam E} is {\sf RI} but it has a decidable sub-theory and hence is not {\sf EHU}.
\item  Follows from Theorem \ref{EHU not imply RI}.
\item Follows from Theorem \ref{relation about EI}.
\item Follows from Example \ref{Ehrenfeucht}.
\item From Theorem \ref{Shoefield theory}, {\sf RFD} does not imply {\sf Creative}. On the other hand, from Theorem \ref{RSS imply RI},  ${\sf RFD}\Rightarrow {\sf RSS} \Rightarrow {\sf RI}$.
\item From Theorem \ref{creative not imply EU}, {\sf Creative} does not imply {\sf EU}.  On the other hand, {\sf RI} implies {\sf EU}.
\item From Theorem \ref{thm on TP original}, {\sf RI} theories may have Turing degree less than $\mathbf{0}^{\prime}$.
\item From Theorem \ref{creative not imply EU}, {\sf Creative} does not imply {\sf EU}.  On the other hand,
 {\sf RI} implies {\sf EU} and {\sf Creative} implies $\mathbf{0}^{\prime}$.

\item From Theorem \ref{EI does not imply REW}, {\sf EI} does not imply {\sf REW}.  On the other hand, {\sf EI} implies {\sf RI}.
\item From Theorem \ref{REW not imply EU}, {\sf REW} does not imply {\sf EU}.  On the other hand, {\sf RI} implies {\sf EU}.
  \item From Theorem \ref{EI does not imply REW}, {\sf EI} does not imply any one of {\sf RFD}, {\sf RSS} and {\sf RSW}. On the other hand, {\sf EI} implies {\sf RI}.
  \item Follows from Theorem \ref{RSS imply RI}.
\item Note that {\sf RFD} implies {\sf RSS} and {\sf RSS} implies {\sf RI}.
\item  From Theorem \ref{REW not imply EU}, {\sf REW} does not imply {\sf EU}. On the other hand, {\sf REW} implies {\sf RSW} and {\sf RI} implies {\sf EU}.
\end{enumerate}
\end{proof}

The following diagram  is a summary of the relationships between {\sf RI} theories and the other properties in Remark \ref{convention}.\smallskip

\begin{tikzpicture}

\begin{scope}[yshift=-1cm]

\node (1) at(4,2) {\sf RI};
\node (2) at(0,4) {\sf Rosser};
\node (3) at(2,4) {\sf EI};
\node (4) at(4,4) {\sf TP};

\node (5) at(6,4) {\sf EHU};
\node (6) at(8,4) {\sf EU};
\node (7) at(8,2) {\sf Creative};

\node (8) at(8,0) {$\mathbf{0}^{\prime}$};
\node (9) at(6,0) {\sf REW};
\node (10) at(4,0) {\sf RFD};
\node (11) at(2,0) {\sf RSS};

\node (12) at(0,0) {\sf RSW};

\draw[->, red] (1)-- (2);
\draw[-latex,bend right]  (2) edge (1);

\draw[->, red] (1)-- (3);
\draw[-latex,bend right]  (3) edge (1);

\draw[->, red] (1)--  (4);
\draw[-latex,bend right, green]  (4) edge (1);

\draw[-latex,bend left, red] (1) edge  (5);
\draw[->, red] (5)-- (1);

\draw[-latex,bend left]  (1) edge (6);
\draw[->, red] (6)--  (1);

\draw[-latex,bend left, red]  (1) edge (7);
\draw[->, red] (7)--  (1);

\draw[-latex,bend left, red]  (1) edge (8);
\draw[->, red] (8)--   (1);

\draw[-latex,bend right, red]  (1) edge (9);
\draw[->, red]  (9) --  (1);

\draw[-latex,bend right, red]  (1) edge (10);
\draw[->] (10)-- (1);

\draw[-latex,bend right, red]  (1) edge (11);
\draw[->] (11)-- (1);

\draw[-latex,bend right, red]  (1) edge (12);
\draw[->, red] (12)-- (1);
\end{scope}
\end{tikzpicture}

\section{Relationships with {\sf TP} and {\sf EHU} theories}

In this section, we discuss the relationships between {\sf TP} theories as well as  {\sf EHU} theories, and the other properties in Remark \ref{convention}.

\begin{lemma}\label{RSW imply undecidable}
Let $T$ be a consistent RE theory. If $T$  is  {\sf RSW}, then $T$ is undecidable.
\end{lemma}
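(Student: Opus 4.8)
The plan is to argue by contradiction, reducing to the fact that no recursive set can be universal for recursive sets (Fact \ref{universal recursive set}). This mirrors the proof of Theorem \ref{RSS imply RI}, but is even more direct, since decidability hands us a recursive witness immediately rather than via a separating set.

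First I would assume, toward a contradiction, that $T$ is decidable. By definition this means $T_P=\{\ulcorner\phi\urcorner: T\vdash\phi\}$ is a recursive set. The goal is then to show that this single recursive set $T_P$ is universal for recursive sets, which is exactly what Fact \ref{universal recursive set} forbids.

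Next I would exploit the hypothesis that $T$ is {\sf RSW}. Let $Y\subseteq\mathbb{N}$ be an arbitrary recursive set. Since every recursive set is weakly representable in $T$, there is an $L(T)$-formula $\phi(x)$ such that $n\in Y\Leftrightarrow T\vdash\phi(\overline{n})$. Define the map $f:n\mapsto\ulcorner\phi(\overline{n})\urcorner$; this is recursive because the formation of the numeral $\overline{n}$, the substitution into $\phi$, and the G\"{o}del numbering are all recursive operations. By weak representability we then have $n\in Y\Leftrightarrow f(n)\in T_P$. As $Y$ was an arbitrary recursive set, this shows that $T_P$ is universal for recursive sets.

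Finally, combining the two observations yields the contradiction: $T_P$ is recursive (by the assumed decidability of $T$) yet universal for recursive sets, contradicting Fact \ref{universal recursive set}. Hence $T$ cannot be decidable, i.e.\ $T$ is undecidable. I do not expect any genuine obstacle here; the only point requiring a moment's care is verifying that $f$ is recursive, but this is routine given the standing assumption of arithmetization. The essential idea is simply that weak representability of all recursive sets packages every recursive set uniformly into $T_P$, so $T_P$ can only be recursive at the cost of violating the non-existence of a universal recursive set.
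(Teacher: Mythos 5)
Your proof is correct, but it takes a different route from the paper's. The paper argues by direct diagonalization: assuming $T$ decidable, it defines the recursive relation $P(n,m)$ (``$n$ codes a formula $\varphi(x)$ and $T\vdash\varphi(\overline{m})$''), forms the recursive diagonal set $D=\{n:\neg P(n,n)\}$, takes a formula $\psi(x)$ weakly representing $D$, and derives the contradiction at $n=\ulcorner\psi(x)\urcorner$. You instead observe that decidability makes $T_P$ recursive while {\sf RSW} makes $T_P$ universal for recursive sets, contradicting Fact \ref{universal recursive set}; this is exactly the mechanism of the paper's proof of Theorem \ref{RSS imply RI} transplanted to the decidable case, with $X=T_P$ itself playing the role of the separating set. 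The two arguments are essentially the same diagonalization packaged differently: the paper inlines it, while you outsource it to Fact \ref{universal recursive set}. Your version buys uniformity with Theorem \ref{RSS imply RI} and a shorter write-up; the paper's version is self-contained and does not lean on the (externally cited) universality fact. In fact, your argument shows slightly more in passing --- that Lemma \ref{RSW imply undecidable} is a formal consequence of Theorem \ref{RSS imply RI}'s method --- though one could also note that for a decidable $T$ the set $T_P$ itself separates $T_P$ from $T_R$, so the lemma would follow from Theorem \ref{RSS imply RI} directly only under the stronger hypothesis {\sf RSS}; your direct use of weak representability is what lets the weaker hypothesis {\sf RSW} suffice. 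No gaps.
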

\begin{proof}\label{}
Suppose that $T$ is a consistent RE
theory which is {\sf RSW} and decidable. Consider the following relation: $P(n,m)$ holds if and only if $n=\ulcorner\varphi(x)\urcorner$ and $T\vdash \varphi(\overline{m})$, where $\varphi(x)$ is a formula with exactly one free variable. Since $T$ is decidable, $P(n,m)$ is recursive. Define $D=\{n: \neg P(n,n)\}$. Then $D$ is recursive. Let $\psi(x)$ weakly represent  $D$ in $T$. Let $n=\ulcorner \psi(x)\urcorner$. Then $n \in D$ iff $\neg P(n,n)$ iff $T\nvdash\psi(\overline{n})$. On the other hand, $n \in D$ iff $T\vdash\psi(\overline{n})$, which is a contradiction.
\end{proof}

\begin{theorem}\label{REW not imply TP}
The property {\sf REW} does not imply {\sf TP}.
\end{theorem}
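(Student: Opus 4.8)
The plan is to reuse the very theory constructed in the proof of Theorem \ref{REW not imply EU}, which is already shown there to be {\sf REW}, and to observe that one of its consistent RE extensions collapses to Turing degree $\mathbf{0}$ while the theory itself must be undecidable. Recall the theory $T := {\sf Succ} + \{P(\overline{i},\overline{n}): n\in W_i\}$ in the language of {\sf Succ} augmented by a new binary predicate $P(x,y)$; the proof of Theorem \ref{REW not imply EU} establishes that every RE set is weakly representable in $T$, so $T$ is {\sf REW}.

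First I would record that $T$ is undecidable. By Lemma \ref{Rosser implies RSS}(1), {\sf REW} implies {\sf RSW}, and by Lemma \ref{RSW imply undecidable} any consistent RE theory that is {\sf RSW} is undecidable; hence $T$ is undecidable and in particular has non-recursive Turing degree.

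Next I would exhibit a consistent RE extension of $T$ of Turing degree $\mathbf{0}$. Take $S := {\sf Succ} + \forall x\forall y\, P(x,y)$, exactly as in the proof of Theorem \ref{REW not imply EU}. As shown there, $S$ is a consistent RE extension of $T$ having only infinite models, and all its models of a fixed uncountable cardinality are isomorphic by Fact \ref{fact on Succ}; so by the {\L}o\'{s}--Vaught test (Theorem \ref{Vaught test}) $S$ is complete. A complete consistent RE theory is decidable, so $S$ has Turing degree $\mathbf{0}$.

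Finally I would combine the two facts to defeat {\sf TP}. If $T$ were {\sf TP}, then since $S$ is a consistent RE extension of $T$ we would have $T {\sf \leq_T} S$; but $S {\sf \leq_T} \mathbf{0}$, so $T$ would be recursive, contradicting its undecidability. Hence $T$ is {\sf REW} but not {\sf TP}. The only step requiring care is the passage from completeness to decidability of $S$ — the standard fact that a consistent complete RE theory is decidable — together with checking that the failure of $T {\sf \leq_T} S$ is precisely what violates the definition of {\sf TP}. Neither presents a genuine obstacle, so the argument is essentially a bookkeeping of results already established in the excerpt.
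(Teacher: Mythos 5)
Your proposal is correct and follows essentially the same route as the paper: the paper also takes a theory that is {\sf REW} but not {\sf EU} (from Theorem \ref{REW not imply EU}), notes that it must be undecidable via {\sf REW} $\Rightarrow$ {\sf RSW} and Lemma \ref{RSW imply undecidable}, and derives a contradiction with {\sf TP} from the existence of a decidable consistent RE extension. The only cosmetic difference is that you instantiate the argument with the concrete theory and extension rather than arguing abstractly from the failure of {\sf EU}.
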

\begin{proof}\label{}
From Theorem \ref{REW not imply EU},  {\sf REW} does not imply {\sf EU}. Let $T$ be {\sf REW} but not {\sf EU}. We show that $T$ is not {\sf TP}. Suppose not, i.e., $T$ is  {\sf TP}. Since $T$ is not {\sf EU}, there exists a  consistent RE extension $S$ of $T$ such that $S$ is decidable. Since  $T$ is  {\sf TP}, we have $T {\sf \leq_T} S$. Thus, $T$ is decidable. By Lemma \ref{RSW imply undecidable}, since {\sf REW} implies {\sf RSW}, $T$ is undecidable which is a contradiction.
\end{proof}

The following theorem is a summary of the relationships between {\sf TP} theories and the other properties following `{\sf TP}' in Remark \ref{convention}.

\begin{theorem}\label{thm on TP}
\begin{enumerate}[(1)]
  \item The property {\sf TP} does not imply {\sf EHU}.
 \item The property {\sf EHU} does not imply {\sf TP}.

 \item The property {\sf TP} implies {\sf EU}.
  \item The property {\sf EU} does not imply {\sf TP}.

  \item The property {\sf TP}  does not imply {\sf Creative}.
  \item {\sf Creative} does not imply {\sf TP}.

  \item The property {\sf TP}  does not imply $\mathbf{0}^{\prime}$.
  \item The property $\mathbf{0}^{\prime}$ does not imply {\sf TP}.

  \item The property {\sf TP} does not imply {\sf REW}.
  \item The property {\sf REW} does not imply {\sf TP}.

  \item The property {\sf TP} does not imply any one of {\sf RFD}, {\sf RSS}, {\sf RSW}.
 \item The property {\sf RSW}  does not imply {\sf TP}.

\end{enumerate}
\end{theorem}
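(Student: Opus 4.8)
The plan is to prove the single positive implication, part (3), from first principles, and then to obtain all eleven non-implications by assembling counterexamples already available in the excerpt, organised around three recurring ideas: the implication ${\sf TP}\Rightarrow{\sf EU}$, the Turing degree of a theory, and the bridge ${\sf EI}\Rightarrow{\sf TP}$.

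For (3) I would argue by contradiction. If $T$ is ${\sf TP}$ but not ${\sf EU}$, then $T$ has a decidable (hence recursive) consistent RE extension $S$; the defining clause of ${\sf TP}$ gives $T {\sf \leq_T} S$, so $T$ is recursive, contradicting the undecidability built into ${\sf TP}$. This lone implication drives four of the non-implications: any property not known to imply ${\sf EU}$ cannot imply ${\sf TP}$. Thus (4) is immediate from Example \ref{EU not imply TP}; (6) follows because {\sf Creative} does not imply {\sf EU} (Theorem \ref{creative not imply EU}); and both (8) and (12) are witnessed by the {\sf REW} theory $T$ of Theorem \ref{REW not imply EU}, which is not {\sf EU} and hence not {\sf TP}. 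For these last two I would note that this $T$ is at once $\mathbf{0}^{\prime}$ and {\sf RSW}: any {\sf REW} theory weakly represents the halting set $K$, so $K{\sf \leq_T} T_P {\sf \leq_T}\mathbf{0}^{\prime}$ forces $T$ to have degree $\mathbf{0}^{\prime}$, while ${\sf REW}\Rightarrow{\sf RSW}$.

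For the degree-based separations I would use the theory $T_{\mathbf{d}}$ of Theorem \ref{thm on TP original}, which is {\sf TP} of Turing degree $\mathbf{d}$ for every non-recursive RE degree $\mathbf{d}$; taking $\mathbf{d}<\mathbf{0}^{\prime}$ (such a degree exists by the Friedberg--Muchnik theorem) yields a {\sf TP} theory of degree strictly below $\mathbf{0}^{\prime}$. Since {\sf Creative} $\Rightarrow\mathbf{0}^{\prime}$ (Theorem \ref{relation about EI}) and, by the halting-set argument above, ${\sf REW}\Rightarrow\mathbf{0}^{\prime}$, this single $T_{\mathbf{d}}$ settles (5), (7) and (9). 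Part (1) uses the same $T_{\mathbf{d}}$, which contains the decidable sub-theory {\sf J} over its own language and so is not {\sf HU}, hence not {\sf EHU}. For (2) I would take the {\sf EU}-but-not-{\sf TP} theory $T_1$ of Example \ref{EU not imply TP}, pass to $S:=pere(T_1)$ (Theorem \ref{RBM key thm}), observe that $S$ is finitely axiomatised and {\sf EU} and hence {\sf EHU} by Theorem \ref{Thm on EHU}, and note that the Boolean recursive isomorphism between $T_1$ and $S$ preserves {\sf TP}, so $S$ is not {\sf TP}; this is exactly the pattern of Theorem \ref{EHU not imply RI}. Part (10) is already Theorem \ref{REW not imply TP}.

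The part requiring a genuine observation is (11). Here the key is that ${\sf EI}\Rightarrow{\sf TP}$ (Theorem \ref{thm on EI}(3)): an {\sf EI} theory and all its consistent RE extensions have degree $\mathbf{0}^{\prime}$, so the required reduction is automatic. I would therefore reuse the {\sf EI} theory $T:={\sf Succ^{-}}+\{\phi_n: n\in B\}+\{\neg\phi_n: n\in C\}$ of Theorem \ref{EI does not imply REW}, which is {\sf EI} (hence {\sf TP}) but whose weakly representable sets are exactly the finite and cofinite sets by Theorem \ref{thm on Succ}; thus $T$ is not {\sf RSW}, and since ${\sf RFD}\Rightarrow{\sf RSS}\Rightarrow{\sf RSW}$ it is neither {\sf RSS} nor {\sf RFD}. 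The main obstacle is conceptual rather than computational: recognising which of the three organising principles governs each part, and in particular spotting that the ${\sf EI}\Rightarrow{\sf TP}$ bridge lets the ${\sf Succ^{-}}$-based example serve all of (11) at once. The one arithmetical point needing care is the degree computation ${\sf REW}\Rightarrow\mathbf{0}^{\prime}$ underlying (8) and (9), which I would isolate as a short lemma on weak representability of the halting set.
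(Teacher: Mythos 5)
Your proposal is correct and follows essentially the same route as the paper: part (3) via the definition of {\sf TP}, and the eleven non-implications assembled from Theorem \ref{thm on TP original}, Example \ref{EU not imply TP}, Theorem \ref{REW not imply EU}, Theorem \ref{EI does not imply REW} and the {\sf EI} $\Rightarrow$ {\sf TP} bridge. The only (harmless) deviations are in (8), where the paper instead takes the theory $T_1$ of Example \ref{EU not imply TP} with $Y$ of degree $\mathbf{0}^{\prime}$ while you use the {\sf REW} non-{\sf EU} theory together with the observation {\sf REW} $\Rightarrow\mathbf{0}^{\prime}$, and in (9), where the paper reuses the {\sf EI}-but-not-{\sf REW} theory while you argue by degrees; both alternatives are valid.
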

\begin{proof}\label{}
\begin{enumerate}[(1)]
\item The theory we construct in Theorem \ref{thm on TP original}  based on {\sf J} is {\sf TP} but not {\sf EHU} since it has a decidable sub-theory {\sf J}.
      \item From Example \ref{EU not imply TP}, take an {\sf EU} theory $T$ which is not {\sf TP}. Let $S:=pere(T)$  as in Theorem \ref{RBM key thm}. From Theorem \ref{RBM key thm} and Theorem \ref{Thm on EHU}, $S$ is {\sf EHU}.
          Since $S$ is  Boolean recursively isomorphic to $T$ and Boolean recursive isomorphisms preserve {\sf TP} theories,  $S$  is not {\sf TP}.
  \item Let $S$ be a consistent RE extension of $T$. Since $T$ has {\sf TP}, we have $S {\sf \geq_T} T$ and $T$ is undecidable. Thus, $S$ is undecidable.
      \item  Follows from Example \ref{EU not imply TP}.
  \item From Theorem \ref{thm on TP original}, a {\sf TP} theory may have the Turing degree less than $\mathbf{0}^{\prime}$.
  \item From Theorem \ref{creative not imply EU}, {\sf Creative} does not imply {\sf EU}. On the other hand, {\sf TP} implies {\sf EU}.
   \item From Theorem \ref{thm on TP original}, a {\sf TP} theory may have degree less than $\mathbf{0}^{\prime}$.
  \item Recall that in Example \ref{EU not imply TP} the theory $T_1:= T_0 + \{A_{2n+1}: n\in Y\}$ is {\sf EU} but not {\sf TP} where $Y$ is an RE set with Turing degree $\mathbf{e}>\mathbf{0}$. If we take $Y$ to be an RE set with Turing degree $\mathbf{0}^{\prime}$, then $T_1$ has Turing degree $\mathbf{0}^{\prime}$ but is not {\sf TP} as in Example \ref{EU not imply TP}.
  \item From Theorem \ref{EI does not imply REW}, {\sf EI} does not imply {\sf REW}. On the other hand, from Theorem \ref{thm on EI}(3), {\sf EI} implies {\sf TP}.
  \item Follows from Theorem \ref{REW not imply TP}.
  \item  From Theorem \ref{EI does not imply REW}, {\sf EI} does not imply any one of {\sf RFD}, {\sf RSS}, {\sf RSW}. On the other hand,  {\sf EI} implies {\sf TP}.
      \item From Theorem \ref{REW not imply TP}, {\sf REW} does not imply {\sf TP}. On the other hand,  {\sf REW} implies {\sf RSW}.
\end{enumerate}
\end{proof}

The following diagram  is a summary of the relationships between {\sf TP} theories and the other properties in Remark \ref{convention}.\smallskip

\begin{tikzpicture}

\begin{scope}[yshift=-1cm]

\node (1) at(4,2) {\sf TP};
\node (2) at(0,4) {\sf Rosser};
\node (3) at(2,4) {\sf EI};
\node (4) at(4,4) {\sf RI};

\node (5) at(6,4) {\sf EHU};
\node (6) at(8,4) {\sf EU};
\node (7) at(8,2) {\sf Creative};

\node (8) at(8,0) {$\mathbf{0}^{\prime}$};
\node (9) at(6,0) {\sf REW};
\node (10) at(4,0) {\sf RFD};
\node (11) at(2,0) {\sf RSS};

\node (12) at(0,0) {\sf RSW};

\draw[->, red] (1)-- (2);
\draw[-latex,bend right]  (2) edge (1);

\draw[->, red] (1)-- (3);
\draw[-latex,bend right]  (3) edge (1);

\draw[->, green] (1)--  (4);
\draw[-latex,bend right, red]  (4) edge (1);

\draw[-latex,bend left, red] (1) edge  (5);
\draw[->, red] (5)-- (1);

\draw[-latex,bend left]  (1) edge (6);
\draw[->, red] (6)--  (1);

\draw[-latex,bend left, red]  (1) edge (7);
\draw[->, red] (7)--  (1);

\draw[-latex,bend left, red]  (1) edge (8);
\draw[->, red] (8)--   (1);

\draw[-latex,bend right, red]  (1) edge (9);
\draw[->, red]  (9) --  (1);

\draw[-latex,bend right, red]  (1) edge (10);
\draw[->,green] (10)-- (1);

\draw[-latex,bend right, red]  (1) edge (11);
\draw[->,green] (11)-- (1);

\draw[-latex,bend right, red]  (1) edge (12);
\draw[->, red] (12)-- (1);
\end{scope}
\end{tikzpicture}

\begin{lemma}\label{REW imply creative}
The property {\sf REW} implies {\sf Creative}.
\end{lemma}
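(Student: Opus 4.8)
The plan is to reduce everything to the characterization of \textsf{Creative} theories recorded in Fact \ref{fact on creative}(1): a consistent RE theory $T$ is \textsf{Creative} if and only if every RE set is many-one reducible to $T_P$ (for every RE set $X$ there is a recursive $f$ with $n \in X \Leftrightarrow f(n) \in T_P$). Note that $T_P$ is automatically RE since $T$ is an RE theory, so the content of being \textsf{Creative} is exactly this completeness-style reducibility property. Hence it suffices, given that $T$ is \textsf{REW}, to exhibit for an arbitrary RE set $X$ a recursive function reducing $X$ to $T_P$.

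First I would fix an arbitrary RE set $X$. Since $T$ is \textsf{REW}, the set $X$ is weakly representable in $T$, so there is an $L(T)$-formula $\phi(x)$ with one free variable such that for all $n \in \mathbb{N}$ we have $n \in X \Leftrightarrow T \vdash \phi(\overline{n})$. This formula is the only ingredient supplied by the hypothesis, and it is precisely what converts membership in $X$ into provability in $T$.

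Next I would define $f(n) = \ulcorner \phi(\overline{n}) \urcorner$. The map sending $n$ to the G\"{o}del number of the sentence $\phi(\overline{n})$ is recursive, because substituting the numeral $\overline{n} = \mathbf{S}^n \mathbf{0}$ into $\phi(x)$ and computing the resulting G\"{o}del number is a recursive operation on the arithmetization. Chaining the weak representation equivalence with the definition of $T_P$ gives, for every $n$, the chain $n \in X \Leftrightarrow T \vdash \phi(\overline{n}) \Leftrightarrow \ulcorner \phi(\overline{n})\urcorner \in T_P \Leftrightarrow f(n) \in T_P$. Thus $f$ reduces $X$ to $T_P$, and since $X$ was an arbitrary RE set, every RE set is reducible to $T_P$. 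By Fact \ref{fact on creative}(1), $T$ is \textsf{Creative}.

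I do not expect a genuine obstacle here: the argument is essentially a one-line application of \textsf{REW} followed by the reducibility characterization of creativity. The only points requiring (routine) care are verifying that the substitution function $n \mapsto \ulcorner \phi(\overline{n})\urcorner$ is recursive and observing that $T_P$ is RE by virtue of $T$ being an RE theory; both are standard facts about the arithmetization, so no hard step is anticipated.
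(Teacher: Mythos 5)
Your proof is correct and follows essentially the same route as the paper: fix an RE set $X$, take a weakly representing formula $\phi(x)$, use the recursive map $n \mapsto \ulcorner \phi(\overline{n})\urcorner$ to reduce $X$ to $T_P$, and conclude by Fact \ref{fact on creative}(1). No issues.
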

\begin{proof}\label{}
Suppose $T$ is {\sf REW}. Let $A$ be an RE set. Suppose $A$ is weakly representable in $T$ by $\phi(x)$. Let $f:n\mapsto \ulcorner \phi(\overline{n})\urcorner$. Note that $f$ is recursive and $n\in A \Leftrightarrow T\vdash \phi(\overline{n})\Leftrightarrow f(n)\in T_P$. From Fact \ref{fact on creative}(1), $T$ is {\sf Creative}.
\end{proof}

\begin{theorem}\label{RSS not imply EHU}
The property {\sf RSS} does not  imply {\sf EHU}.
\end{theorem}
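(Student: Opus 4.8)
The plan is to produce a single theory that is {\sf RSS} yet fails to be {\sf EHU}. The conceptual point to keep in mind is that {\sf EHU} is stronger than essential undecidability (an {\sf EHU} theory has all of its consistent RE extensions {\sf HU}, hence undecidable), while {\sf RSS} already forces {\sf RI} by Theorem \ref{RSS imply RI} and therefore {\sf EU}. So the witness cannot be some trivially weak theory: it must be essentially undecidable and still admit a decidable sub-theory over its own language. Putnam's theory ${\sf E}$ from Example \ref{Putnam E} is exactly such a candidate, and it is the economical choice because the paper has already established both features I need: ${\sf E}$ is {\sf Rosser}, and ${\sf E}$ sits over the decidable theory ${\sf D}$ in the same signature.

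First I would record that ${\sf E}$ is {\sf RSS}. This is immediate from Lemma \ref{Rosser implies RSS}(2): any {\sf Rosser} theory is {\sf RSS}, since for a recursive set $A$ the disjoint RE pair $(A,\overline{A})$ is separable by a single formula, which then strongly represents $A$. Next I would argue that ${\sf E}$ is not {\sf EHU}. Since ${\sf D}$ is a decidable sub-theory of ${\sf E}$ over the same language, ${\sf E}$ is not {\sf HU}. Finally, because ${\sf E}$ is a consistent RE extension of itself, the failure of {\sf HU} for ${\sf E}$ already witnesses the failure of {\sf EHU}: an {\sf EHU} theory must have \emph{every} consistent RE extension {\sf HU}, in particular itself. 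Combining these, ${\sf E}$ is {\sf RSS} but not {\sf EHU}, which is the desired separation. This is precisely the same witness used for ``{\sf Rosser} does not imply {\sf EHU}'' in Theorem \ref{thm on Rosser}(7), now routed through {\sf RSS} via Lemma \ref{Rosser implies RSS}(2).

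The argument has no computational content, so the only thing to be careful about is conceptual bookkeeping: one must check that ${\sf E}$ genuinely falls under the representability framework of Definition \ref{property of theory}, i.e.\ that its constants $\overline{1},\overline{2},\dots$ play the role of the numerals $\overline{n}$, so that ``strongly representable in ${\sf E}$'' is well defined and Lemma \ref{Rosser implies RSS}(2) applies verbatim. I expect this identification of the numerals with the constants to be the only potential obstacle; once it is made, the implication {\sf Rosser} $\Rightarrow$ {\sf RSS} gives {\sf RSS} for free, and the remainder is the elementary observation that a theory is always among its own consistent RE extensions, so a non-{\sf HU} theory cannot be {\sf EHU}.
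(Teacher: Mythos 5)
Your proposal is correct and uses the same witness as the paper, namely Putnam's theory ${\sf E}$, but you establish the {\sf RSS} part by a different route. The paper verifies directly from the axioms of ${\sf E}$ that every recursive set $A$ with $A=W_i$ and $\overline{A}=W_j$ is strongly represented by $P_i(x)$: axiom $A_1$ gives $n\in A\Rightarrow {\sf E}\vdash P_i(\overline{n})$ and $n\notin A\Rightarrow {\sf E}\vdash P_j(\overline{n})$, and axiom $A_2$ together with Fact \ref{special rf} (so $e(i,j)=i$, $e(j,i)=j$) yields ${\sf E}\vdash P_i(\overline{n})\rightarrow\neg P_j(\overline{n})$, hence $n\notin A\Rightarrow{\sf E}\vdash\neg P_i(\overline{n})$. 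You instead quote Smullyan's result (recorded in Example \ref{Putnam E}) that ${\sf E}$ is {\sf Rosser} and apply Lemma \ref{Rosser implies RSS}(2). Both are valid; your version is shorter and leans on a stronger imported fact, while the paper's version is self-contained at the level of the axioms of ${\sf E}$ and exhibits the strongly representing formula explicitly. The non-{\sf EHU} half is identical in both arguments. Your caveat about whether ``strongly representable'' is well defined for ${\sf E}$ (whose numerals are primitive constants rather than terms $\mathbf{S}^n\mathbf{0}$) is a fair observation, but the paper adopts the same convention silently, so it does not distinguish the two proofs.
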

\begin{proof}\label{}
Consider Putnam's theory {\sf E} in Example \ref{Putnam E}. We show that {\sf E} is {\sf RSS}. Let $A$ be a recursive set with $A=W_i$ and $\overline{A}=W_j$. From the axioms of {\sf E}, we have $n\in A\Rightarrow {\sf E}\vdash P_i(\overline{n})$. Suppose $n\notin A$. Then ${\sf E}\vdash P_j(\overline{n})$. Take the recursive function $e(i,j)$ as in Fact \ref{special rf}. Since $e(i,j)=i$ and $e(j,i)=j$ by Fact \ref{special rf}, we have  ${\sf E}\vdash P_i(\overline{n})\rightarrow \neg P_j(\overline{n})$. Thus, $n\notin A\Rightarrow {\sf E}\vdash \neg P_i(\overline{n})$. So $A$ is strongly representable in {\sf E}. Hence, {\sf E} is {\sf RSS}. Since {\sf E} has a decidable sub-theory, {\sf E} is not {\sf EHU}.
\end{proof}

The following theorem is a summary of the relationships between {\sf EHU} theories and the other properties following `{\sf EHU}' in Remark \ref{convention}.

\begin{theorem}\label{thm on EHU}
\begin{enumerate}[(1)]
  \item The property {\sf EHU} implies {\sf EU}.
  \item The property {\sf HU} does not imply {\sf EU}.
  \item The property {\sf EU} does not imply {\sf EHU}.

\item The property {\sf EHU}  does not imply {\sf Creative}.
\item  {\sf Creative} does not imply {\sf EHU}.

\item  The property {\sf EHU} does not imply $\mathbf{0}^{\prime}$.
  \item The property $\mathbf{0}^{\prime}$ does not imply {\sf EHU}.

\item The property {\sf EHU} does not imply {\sf REW}.
\item The property {\sf REW} does not imply {\sf EHU}.

  \item The property {\sf EHU} does not imply {\sf RSS}.
  \item  The property {\sf EHU} does not imply {\sf RFD}.

  \item The property {\sf RSS} does not  imply {\sf EHU}.
  \item The property {\sf RSW} does not  imply {\sf EHU}.
\end{enumerate}
\end{theorem}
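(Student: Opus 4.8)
The plan is to settle the thirteen clauses by reducing each to a result proved earlier in the paper, isolating the one place where a genuinely new example is needed. I would first establish the sole positive implication, clause (1): if $T$ is {\sf EHU} and $S$ is any consistent RE extension of $T$, then $S$ is {\sf HU}, and since $S$ is a subtheory of itself, $S$ is undecidable; as $S$ was arbitrary, $T$ is {\sf EU}. This fact is the workhorse. Its contrapositive, that a theory which is not {\sf EU} cannot be {\sf EHU}, immediately gives clause (5) via the {\sf Creative} but non-{\sf EU} theory ${\sf J}+\{A_n:n\in X\}$ of Theorem \ref{creative not imply EU}, and clause (9) via the {\sf REW} but non-{\sf EU} theory of Theorem \ref{REW not imply EU}. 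The clauses asserting that various properties fail to imply {\sf EHU} even in the presence of {\sf EU} are handled by a single theory: Putnam's ${\sf E}$ of Example \ref{Putnam E} is {\sf EU} (being {\sf Rosser}, hence also {\sf EI}, {\sf Creative}, and of Turing degree $\mathbf{0}^{\prime}$) and {\sf RSS} (Theorem \ref{RSS not imply EHU}), yet it contains the decidable subtheory ${\sf D}$ and so is not {\sf HU}, hence not {\sf EHU}. This witnesses clauses (3), (7), (12), and (13) at once.

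Next I would treat the degree-based negative clauses (4), (6), (8) uniformly. By Theorem \ref{EHU degree} there is a finitely axiomatised {\sf EHU} theory of any prescribed non-recursive RE degree; fixing one of degree strictly below $\mathbf{0}^{\prime}$ immediately yields clause (6). Since {\sf Creative} implies $\mathbf{0}^{\prime}$ (Theorem \ref{relation about EI}) and {\sf REW} implies {\sf Creative} (Lemma \ref{REW imply creative}), that same low-degree {\sf EHU} theory is neither {\sf Creative} (clause (4)) nor {\sf REW} (clause (8)). For the representability clauses (10) and (11) the key move is to route through recursive inseparability rather than through Turing degree, since {\sf RSS} need not force degree $\mathbf{0}^{\prime}$. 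By Theorem \ref{EHU not imply RI} there is an {\sf EHU} theory that is not {\sf RI}, and since {\sf RSS} implies {\sf RI} (Theorem \ref{RSS imply RI}) this theory is not {\sf RSS}, giving clause (10); because {\sf RFD} implies {\sf RSS} (Lemma \ref{RFD imply RSS}), the same theory is not {\sf RFD}, giving clause (11).

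The one clause that cannot be imported from earlier results is (2), that {\sf HU} does not imply {\sf EU}, and this is where I expect the main (if modest) obstacle: I must produce a theory all of whose same-language subtheories are undecidable yet which admits a decidable consistent extension. The natural candidate is pure first-order logic $T_0$ in a language with a single binary relation symbol $E$. Its only subtheory over the same language is $T_0$ itself, since every theory contains the logical validities; and validity in this language is undecidable (Church's theorem in its binary-predicate form), so $T_0$ is {\sf HU}. On the other hand $T_0$ is extended by the complete theory of an infinite structure in which $E$ is interpreted as equality, which is consistent, recursively axiomatisable, and decidable, so $T_0$ is not {\sf EU}.

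The care needed is precisely the verification that this minimal logic is genuinely hereditarily undecidable while still possessing a decidable completion; the subtlety is that {\sf HU} secretly demands that the validity problem of $L(T)$ already be undecidable, which is why weak arithmetical bases such as ${\sf Succ^{-}}$ or {\sf J} cannot be used here. Once clause (1), the degree arguments, and the {\sf RI}-routing of clauses (10)--(11) are in place, every remaining clause is a one-line citation, so the proof is essentially a bookkeeping exercise organised around these four ideas.
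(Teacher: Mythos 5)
Your proposal is correct, and for twelve of the thirteen clauses it follows essentially the same route as the paper: clause (1) from the definition; the degree-based clauses (4), (6), (8) from Theorem \ref{EHU degree} together with {\sf Creative} $\Rightarrow \mathbf{0}^{\prime}$ and {\sf REW} $\Rightarrow$ {\sf Creative}; clauses (10)--(11) by routing through Theorem \ref{EHU not imply RI} and {\sf RFD} $\Rightarrow$ {\sf RSS} $\Rightarrow$ {\sf RI}; and the remaining negative clauses from Putnam's ${\sf E}$, the theory ${\sf J}+\{A_n : n\in X\}$, and the non-{\sf EU} theories of Theorems \ref{creative not imply EU} and \ref{REW not imply EU} (your use of ``not {\sf EU}, hence not {\sf EHU}'' for (5) and (9) where the paper argues via ``has a decidable subtheory, hence not {\sf HU}'' is an immaterial variation). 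The one genuine divergence is clause (2). The paper takes $T$ to be $\mathbf{Q}$ with its first axiom deleted: every subtheory of $T$ is consistent with $\mathbf{Q}$ and hence undecidable by Fact \ref{fact on Q}(1), so $T$ is {\sf HU}, while Fact \ref{fact on Q}(2) (minimal essential undecidability of $\mathbf{Q}$) gives that $T$ is not {\sf EU}. You instead take pure predicate logic over a single binary relation symbol: its only subtheory is itself, which is undecidable by Church's theorem for the binary-predicate reduction class, and it has the decidable complete extension given by an infinite structure with $E$ as identity (complete by the {\L}o\'{s}--Vaught test, hence decidable). Both arguments are sound; the paper's example leans on the imported Tarski--Mostowski--Robinson facts already stated in Section \ref{section 2}, whereas yours is self-contained modulo the classical undecidability of validity with one binary predicate, which the paper does not otherwise cite, and it makes transparent the point you flag --- that {\sf HU} for a theory with no nonlogical content is exactly undecidability of the validity problem for its language.
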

\begin{proof}\label{}
\begin{enumerate}[(1)]
  \item     Follows from the definition of {\sf EHU}.
  \item Let $T$ be Robinson arithmetic $\mathbf{Q}$ without the first axiom. Since any theory consistent with $\mathbf{Q}$ over the same language  is undecidable, $T$ is {\sf HU} since any sub-theory of $T$ over the same language is consistent with $\mathbf{Q}$ and hence undecidable. Since $\mathbf{Q}$ is minimal essentially undecidable  by Fact \ref{fact on Q}, $T$ is not {\sf EU}.
 \item     The theory $T$ in Theorem \ref{EI not R}(3) or the theory {\sf E} in Example \ref{Putnam E} is {\sf EU} but it has a decidable sub-theory and hence is not {\sf EHU}.
  \item  From Theorem \ref{EHU degree}, {\sf EHU} theories can have any  non-recursive RE Turing degree. But {\sf Creative} theories have Turing degree $\mathbf{0}^{\prime}$.
  \item Let $T={\sf J}+ \{A_n: n\in X\}$ where $X$ is creative. It is easy to check that $T$ is {\sf Creative}, but $T$ is not {\sf EHU} since it has a decidable sub-theory. Another argument is: {\sf EI} does not imply {\sf EHU} and {\sf EI} implies {\sf Creative}.
\item From Theorem \ref{EHU degree}, {\sf EHU} theories can have any  non-recursive RE Turing degree.
  \item  Follows from (5) since {\sf Creative} implies $\mathbf{0}^{\prime}$.
\item Follows from (4) since {\sf REW} implies {\sf Creative} by Lemma \ref{REW imply creative}.
  \item From Theorem \ref{REW not imply R-like}, {\sf REW} does not imply {\sf HU}. On the other hand, {\sf EHU} implies {\sf HU}.
  \item  From Theorem \ref{EHU not imply RI},  {\sf EHU} does not imply {\sf RI}. From Theorem \ref{RSS imply RI},  {\sf RSS} implies {\sf RI}. Thus, {\sf EHU} does not imply {\sf RSS}.
      \item Follows from (10) since {\sf RFD} implies {\sf RSS}.
      \item Follows from Theorem \ref{RSS not imply EHU}.
      \item Follows from (12) since {\sf RSS} implies {\sf RSW}.
\end{enumerate}
\end{proof}

The following diagram  is a summary of the relationships between {\sf EHU} theories and the other properties in Remark \ref{convention}.\smallskip

\begin{tikzpicture}

\begin{scope}[yshift=-1cm]

\node (1) at(4,2) {\sf EHU};
\node (2) at(0,4) {\sf Rosser};
\node (3) at(2,4) {\sf EI};
\node (4) at(4,4) {\sf RI};

\node (5) at(6,4) {\sf TP};
\node (6) at(8,4) {\sf EU};
\node (7) at(8,2) {\sf Creative};

\node (8) at(8,0) {$\mathbf{0}^{\prime}$};
\node (9) at(6,0) {\sf REW};
\node (10) at(4,0) {\sf RFD};
\node (11) at(2,0) {\sf RSS};

\node (12) at(0,0) {\sf RSW};

\draw[->, red] (1)-- (2);
\draw[-latex,bend right, red]  (2) edge (1);

\draw[->, red] (1)-- (3);
\draw[-latex,bend right, red]  (3) edge (1);

\draw[->, red] (1)--  (4);
\draw[-latex,bend right, red]  (4) edge (1);

\draw[-latex,bend left, red] (1) edge  (5);
\draw[->, red] (5)-- (1);

\draw[-latex,bend left]  (1) edge (6);
\draw[->, red] (6)--  (1);

\draw[-latex,bend left, red]  (1) edge (7);
\draw[->, red] (7)--  (1);

\draw[-latex,bend left, red]  (1) edge (8);
\draw[->, red] (8)--   (1);

\draw[-latex,bend right, red]  (1) edge (9);
\draw[->, red]  (9) --  (1);

\draw[-latex,bend right, red]  (1) edge (10);
\draw[->,green] (10)-- (1);

\draw[-latex,bend right, red]  (1) edge (11);
\draw[->, red] (11)-- (1);

\draw[-latex,bend right,green]  (1) edge (12);
\draw[->, red] (12)-- (1);
\end{scope}
\end{tikzpicture}

\section{Relationships with other properties}

In this section, we discuss the relationships between the properties in Remark \ref{convention} and the following properties in order: {\sf EU}, {\sf Creative}, $\mathbf{0}^{\prime}$, {\sf REW}, {\sf RFD}, {\sf RSS} and {\sf RSW}.\smallskip

\begin{fact}[\cite{Tarski}, Corollary 2, p.49; \cite{Murawski}, Theorem 2.4.20]~\label{fact on EU}
Either of {\sf RFD} and {\sf RSS} implies ${\sf EU}$.
\end{fact}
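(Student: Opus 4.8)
The plan is to deduce both implications from the single implication that {\sf RSS} implies {\sf EU}, since {\sf RFD} implies {\sf RSS} is already available from Lemma \ref{RFD imply RSS}(1). Thus it suffices to show that every {\sf RSS} theory is {\sf EU}.

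I would carry this out by chaining through recursive inseparability. First, by Theorem \ref{RSS imply RI}, any {\sf RSS} theory $T$ is {\sf RI}, i.e.\ the nuclei $(T_P, T_R)$ admit no recursive separating set. Then, by Theorem \ref{relation about EI}(3), any {\sf RI} theory is {\sf EU}. Composing these yields {\sf RSS} $\Rightarrow$ {\sf RI} $\Rightarrow$ {\sf EU}, and combined with {\sf RFD} $\Rightarrow$ {\sf RSS} we obtain that each of {\sf RFD} and {\sf RSS} implies {\sf EU}.

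Alternatively, one could argue directly that {\sf RSS} is preserved under consistent RE extensions over the same language: if $\phi(x)$ strongly represents a recursive set $A$ in $T$ and $S$ is any extension of $T$ over the same language, then the same formula $\phi(x)$ strongly represents $A$ in $S$, because $T\vdash\phi(\overline{n})$ and $T\vdash\neg\phi(\overline{n})$ propagate upward from $T$ to $S$. Hence every consistent RE extension $S$ of an {\sf RSS} theory is itself {\sf RSS}, so by Lemma \ref{RFD imply RSS}(2) it is {\sf RSW}, and by Lemma \ref{RSW imply undecidable} it is undecidable; this is precisely the assertion that $T$ is {\sf EU}.

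There is no substantial obstacle here, as the statement is a corollary of the machinery developed earlier. The only point requiring a moment's care is the alignment of the ``over the same language'' clause throughout: {\sf EU} quantifies over extensions in the same language, and both the {\sf RI}-based argument (where a recursive extension would furnish, in the same language, a recursive set separating $(T_P, T_R)$) and the direct argument (where the strong representation formulas are reused verbatim in the extension) respect this constraint, so no difficulty arises.
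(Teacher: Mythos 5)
Your primary argument is exactly the paper's proof: it chains Lemma \ref{RFD imply RSS}, Theorem \ref{RSS imply RI}, and Theorem \ref{relation about EI}(3) to get ${\sf RFD}\Rightarrow{\sf RSS}\Rightarrow{\sf RI}\Rightarrow{\sf EU}$. Your alternative direct argument (strong representability propagates to consistent RE extensions over the same language, which are then undecidable by Lemma \ref{RSW imply undecidable}) is also sound, but the route the paper takes is your first one.
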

\begin{proof}\label{}
Follows from Theorem \ref{RSS imply RI} since ${\sf RFD}\Rightarrow {\sf RSS}$  and {\sf RI} implies {\sf EU}.
\end{proof}

The following theorem is a summary of the relationships between {\sf EU} theories and the other properties following `{\sf EU}' in Remark \ref{convention}.

\begin{theorem}\label{thm on EU}
\begin{enumerate}[(1)]
  \item The property ${\sf EU}$ does not imply {\sf Creative}.
  \item {\sf Creative} does not imply ${\sf EU}$.
      \item The property {\sf EU} does not imply  $\mathbf{0}^{\prime}$.
  \item The property $\mathbf{0}^{\prime}$ does not imply {\sf EU}.

  \item The property {\sf EU} does not imply {\sf REW}.
  \item The property {\sf REW} does not imply {\sf EU}.
      \item  The property ${\sf EU}$ does not imply any one of {\sf RFD}, {\sf RSS} and {\sf RSW}.
      \item Either of {\sf RFD} and {\sf RSS} implies ${\sf EU}$.
      \item The property {\sf RSW} does not imply ${\sf EU}$.
\end{enumerate}
\end{theorem}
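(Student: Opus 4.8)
The plan is to obtain every part of Theorem \ref{thm on EU} by chaining together implications already assembled in Theorem \ref{relation about EI}, Lemma \ref{RFD imply RSS}, Lemma \ref{Rosser implies RSS}, Theorem \ref{RSS imply RI}, and Fact \ref{fact on EU}, drawing on only four witness theories that have already been constructed earlier in the paper. Part (8) requires no new work: it is precisely Fact \ref{fact on EU}, since ${\sf RFD}\Rightarrow{\sf RSS}\Rightarrow{\sf RI}\Rightarrow{\sf EU}$. The remaining eight parts are non-implications, and they split naturally into statements of the form ``{\sf EU} does not imply $X$'' (parts (1), (3), (5), (7)) and statements of the form ``$X$ does not imply {\sf EU}'' (parts (2), (4), (6), (9)).

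For the statements ``{\sf EU} does not imply $X$'' I would exhibit a theory that is {\sf EU} but lacks $X$. For parts (1) and (3) I would take Shoenfield's theory $T$ from Theorem \ref{Shoefield theory}. Since $T$ is {\sf RFD}, the chain ${\sf RFD}\Rightarrow{\sf RSS}$ (Lemma \ref{RFD imply RSS}), ${\sf RSS}\Rightarrow{\sf RI}$ (Theorem \ref{RSS imply RI}), and ${\sf RI}\Rightarrow{\sf EU}$ (Theorem \ref{relation about EI}) shows $T$ is {\sf EU}; yet $T$ is explicitly not {\sf Creative} and has Turing degree strictly below $\mathbf{0}^{\prime}$, which gives both (1) and (3). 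For parts (5) and (7) I would instead use the {\sf EI} theory built over ${\sf Succ^{-}}$ in Theorem \ref{EI does not imply REW}: being {\sf EI} it is {\sf EU} by Theorem \ref{relation about EI}, while Theorem \ref{EI does not imply REW} records that it is neither {\sf REW} nor {\sf RFD} nor {\sf RSS} nor {\sf RSW}.

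For the statements ``$X$ does not imply {\sf EU}'' I would exhibit a theory with property $X$ that fails {\sf EU}. For parts (2) and (4) I would use the theory ${\sf J}+\{A_n : n\in X\}$ of Theorem \ref{creative not imply EU}, which is {\sf Creative} but not {\sf EU}; since {\sf Creative} implies $\mathbf{0}^{\prime}$ (Theorem \ref{relation about EI}), this single theory witnesses both (2) and (4). For parts (6) and (9) I would use the ${\sf Succ}$-based theory of Theorem \ref{REW not imply EU}, which is {\sf REW} but not {\sf EU}; because ${\sf REW}\Rightarrow{\sf RSW}$ (Lemma \ref{Rosser implies RSS}), it is {\sf RSW} as well, so it witnesses both (6) and (9).

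Since each part reduces to a prior theorem together with a short implication chain, there is no genuine obstacle internal to this proof; the only care required is the bookkeeping to confirm that each witness theory has exactly the property claimed and fails the intended target. The most substantive input is external to the argument itself --- namely the Shoenfield construction of Theorem \ref{Shoefield theory}, which does double duty by separating {\sf EU} from both {\sf Creative} and $\mathbf{0}^{\prime}$ through a single {\sf RFD} theory of degree below $\mathbf{0}^{\prime}$.
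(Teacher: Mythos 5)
Your proof is correct and follows essentially the same route as the paper: part (8) is Fact \ref{fact on EU}, and each non-implication is discharged by the same previously constructed witnesses (the Shoenfield theory of Theorem \ref{Shoefield theory}, the ${\sf Succ^{-}}$-based {\sf EI} theory of Theorem \ref{EI does not imply REW}, the {\sf J}-based theory of Theorem \ref{creative not imply EU}, and the {\sf Succ}-based theory of Theorem \ref{REW not imply EU}) combined with the same implication chains. The only cosmetic differences are that the paper cites Theorem \ref{Shoenfield EU} for part (3) and uses the Shoenfield theory for part (5), whereas you reuse the Shoenfield theory for (3) and the ${\sf Succ^{-}}$ theory for (5); both choices are valid.
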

\begin{proof}\label{}
\begin{enumerate}[(1)]
  \item From Theorem \ref{Shoefield theory}, {\sf RFD} does not imply {\sf Creative}. On the other hand, {\sf RFD} implies {\sf EU} by Fact \ref{fact on EU}.
  \item Follows from Theorem \ref{creative not imply EU}.
  \item From Theorem \ref{Shoenfield EU}, {\sf EU} theories may have Turing degree less than $\mathbf{0}^{\prime}$.
  \item From Theorem \ref{creative not imply EU}, {\sf Creative} does not imply {\sf EU}. On the other hand, {\sf Creative} implies $\mathbf{0}^{\prime}$.
  \item The theory in Theorem \ref{Shoefield theory} is {\sf EU}  but not {\sf REW} since no non-recursive set is weakly representable in it.
  \item Follows from Theorem \ref{REW not imply EU}.
  \item From Theorem \ref{EI does not imply REW}, ${\sf EI}$ does not imply any one of {\sf RFD}, {\sf RSS} and {\sf RSW}. On the other hand, ${\sf EI}$ implies ${\sf EU}$.
  \item Follows from Fact \ref{fact on EU}.
  \item From Theorem \ref{REW not imply EU}, {\sf REW} does not imply {\sf EU}. On the other hand, {\sf REW} implies {\sf RSW}.
\end{enumerate}
\end{proof}

The following diagram  is a summary of the relationships between {\sf EU} theories and the other properties in Remark \ref{convention}.\smallskip

\begin{tikzpicture}

\begin{scope}[yshift=-1cm]

\node (1) at(4,2) {\sf EU};
\node (2) at(0,4) {\sf Rosser};
\node (3) at(2,4) {\sf EI};
\node (4) at(4,4) {\sf RI};

\node (5) at(6,4) {\sf TP};
\node (6) at(8,4) {\sf EHU};
\node (7) at(8,2) {\sf Creative};

\node (8) at(8,0) {$\mathbf{0}^{\prime}$};
\node (9) at(6,0) {\sf REW};
\node (10) at(4,0) {\sf RFD};
\node (11) at(2,0) {\sf RSS};

\node (12) at(0,0) {\sf RSW};

\draw[->, red] (1)-- (2);
\draw[-latex,bend right]  (2) edge (1);

\draw[->, red] (1)-- (3);
\draw[-latex,bend right]  (3) edge (1);

\draw[->, red] (1)--  (4);
\draw[-latex,bend right]  (4) edge (1);

\draw[-latex,bend left, red] (1) edge  (5);
\draw[->] (5)-- (1);

\draw[-latex,bend right, red]  (1) edge (6);
\draw[->] (6)--  (1);

\draw[-latex,bend right, red]  (1) edge (7);
\draw[->, red] (7)--  (1);

\draw[-latex,bend right, red]  (1) edge (8);
\draw[->, red] (8)--   (1);

\draw[-latex,bend right, red]  (1) edge (9);
\draw[->, red]  (9) --  (1);

\draw[-latex,bend right, red]  (1) edge (10);
\draw[->] (10)-- (1);

\draw[-latex,bend right, red]  (1) edge (11);
\draw[->] (11)-- (1);

\draw[-latex,bend right, red]  (1) edge (12);
\draw[->, red] (12)-- (1);
\end{scope}
\end{tikzpicture}
\smallskip

Now, we show that the property $\mathbf{0}^{\prime}$ does not imply {\sf Creative}.\smallskip

\begin{definition}[\cite{Rogers87}, pp.109-111]~\label{}
\begin{enumerate}[(1)]
  \item The ordered pair $\langle \langle x_1, \cdots, x_k\rangle, \alpha\rangle$ where $\langle x_1, \cdots, x_k\rangle$ is a $k$-tuple of integers and $\alpha$ is a $k$-ary Boolean function $(k>0)$ is called a \emph{truth-table condition}  (or \emph{{\sf tt}-condition}) of norm $k$.
  \item The {\sf tt}-condition $\langle \langle x_1, \cdots, x_k\rangle, \alpha\rangle$ is \emph{satisfied} by $A$ if $\alpha(c_A(x_1), \cdots, c_A(x_k))=1$ where $c_A$ is the characteristic function of $A$.
  \item Given $A, B\subseteq \mathbb{N}$, we say $A$ is \emph{truth-table reducible} to $B$ (denoted by $A\leq_{{\sf tt}} B$) if there exists a recursive function $f$ which always takes as image a {\sf tt}-condition such that for all $x, x\in A$ iff  the {\sf tt}-condition $f(x)$ is satisfied by $B$.
      \item We say $A\subseteq \mathbb{N}$ is \emph{truth-table complete} ({\sf tt}-complete) if $A$ is RE and for any RE set $B, B\leq_{{\sf tt}} A$.
          \item We say $A \equiv_{{\sf tt}} B$ if $A\leq_{{\sf tt}} B$ and $B\leq_{{\sf tt}} A$. The equivalence classes of $\equiv_{{\sf tt}}$ are called truth table degrees or ${\sf tt}$-degrees.
\end{enumerate}
\end{definition}
\smallskip

\begin{fact}[\cite{Rogers87}, p.112]~\label{tt-degree in zero sharp}
There are at least two truth table degrees in $\mathbf{0}^{\prime}$.
\end{fact}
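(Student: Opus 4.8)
The plan is to produce two sets of Turing degree $\mathbf{0}^{\prime}$ that lie in distinct ${\sf tt}$-degrees. Since $A\leq_{{\sf tt}}B$ implies $A{\sf \leq_T}B$, the relation $\equiv_{{\sf tt}}$ refines $\equiv_T$, so every ${\sf tt}$-degree is contained in a single Turing degree; hence it is enough to find two ${\sf tt}$-inequivalent sets, each of Turing degree $\mathbf{0}^{\prime}$. For the first, take the halting set $K=\{e:\phi_e(e)\downarrow\}$, which has Turing degree $\mathbf{0}^{\prime}$. As $K$ is $m$-complete and $m$-reducibility is the special case of ${\sf tt}$-reducibility using a single query together with the identity Boolean function, $K$ is ${\sf tt}$-complete; this gives one ${\sf tt}$-degree inside $\mathbf{0}^{\prime}$.

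For the second ${\sf tt}$-degree I would exhibit an RE set $D$ of Turing degree $\mathbf{0}^{\prime}$ that is \emph{not} ${\sf tt}$-complete. For any such $D$ one has $D\leq_{{\sf tt}}K$ automatically (because $K$ is ${\sf tt}$-complete), whereas $K\leq_{{\sf tt}}D$ must fail, since otherwise the RE set $D$ would itself be ${\sf tt}$-complete; thus $D\not\equiv_{{\sf tt}}K$, and the ${\sf tt}$-degree of $D$ is a second ${\sf tt}$-degree inside $\mathbf{0}^{\prime}$. To build $D$, fix a one-to-one recursive enumeration $a_0,a_1,a_2,\dots$ of $K$ and form \emph{Dekker's deficiency set}
\[D=\{s:\exists t>s\ (a_t<a_s)\}.\]
A routine argument shows that $D$ is RE with $D{\sf \leq_T}K$ and $K{\sf \leq_T}D$, so $D$ has Turing degree $\mathbf{0}^{\prime}$; moreover, since $K$ is non-recursive, $D$ is \emph{hypersimple}, i.e.\ its complement $\overline{D}$ is hyperimmune.

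The crux is then to show that a hypersimple set can never be ${\sf tt}$-complete. Assuming $K\leq_{{\sf tt}}D$ via a recursive map sending each $x$ to a {\sf tt}-condition with finite query set $Q_x$, I would use the fact that the $Q_x$ are uniformly recursive finite sets to manufacture a recursive function majorizing the principal function of $\overline{D}$, contradicting the hyperimmunity of $\overline{D}$; the essential point is that a ${\sf tt}$-reduction confines all of its oracle queries to a recursively presented finite set, which is precisely the behaviour hypersimplicity rules out. I expect this implication---\emph{hypersimple} $\Rightarrow$ \emph{not} ${\sf tt}$-\emph{complete} (Post's theorem)---to be the main obstacle, the remaining steps being bookkeeping; the details can be found in \cite[p.112]{Rogers87}. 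With $K$ and $D$ established, we obtain at least two ${\sf tt}$-degrees contained in $\mathbf{0}^{\prime}$.
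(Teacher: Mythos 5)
The paper gives no proof of this Fact at all---it is quoted directly from Rogers---and your argument is exactly the standard one behind that citation: $K$ is ($m$-, hence) {\sf tt}-complete, Dekker's deficiency set of $K$ is an RE set of Turing degree $\mathbf{0}^{\prime}$ that is hypersimple, and Post's theorem that no hypersimple set is {\sf tt}-complete then puts the two sets in distinct {\sf tt}-degrees inside $\mathbf{0}^{\prime}$. This is correct; the only step that is more than bookkeeping is Post's theorem itself (your one-line sketch via majorizing the principal function of $\overline{D}$ glosses over the recursion-theorem diagonalization actually needed), but you correctly isolate it as the crux, and it is precisely the classical result the cited page of Rogers is invoking.
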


\begin{theorem}\label{zero sharp does not imply creative}
There exists an RE theory which is $\mathbf{0}^{\prime}$ but not {\sf Creative}.
\end{theorem}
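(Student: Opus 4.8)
The plan is to exploit the classical fact that a \emph{creative} set is exactly an $m$-complete RE set, so by Fact \ref{fact on creative}(1) a {\sf Creative} theory has an $m$-complete, hence truth-table complete, provable set $T_P$. It therefore suffices to realize, as the provable set of a consistent RE theory, an RE set of Turing degree $\mathbf{0}^{\prime}$ whose ${\sf tt}$-degree is strictly below the ${\sf tt}$-complete one. By Fact \ref{tt-degree in zero sharp} there are at least two ${\sf tt}$-degrees inside $\mathbf{0}^{\prime}$; since the halting set is ${\sf tt}$-complete and of Turing degree $\mathbf{0}^{\prime}$, the ${\sf tt}$-complete RE sets occupy a single ${\sf tt}$-degree, so I may fix an RE set $A$ of Turing degree $\mathbf{0}^{\prime}$ that is \emph{not} ${\sf tt}$-complete.

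Given such an $A$, I would form the theory $T := {\sf J} + \{A_n : n \in A\}$, exactly as in Theorem \ref{creative not imply EU} (see Remark \ref{theory of J} for ${\sf J}$ and $A_n$). The theory $T$ is consistent, since the $A_n$ are mutually independent over ${\sf J}$ and hence every assignment forcing $A_n$ for $n\in A$ is realized in some model of ${\sf J}$; and $T$ is RE, since $A$ is RE and ${\sf J}$ is recursively axiomatised.

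The key computation is that $T_P \equiv_{{\sf tt}} A$. For the easy direction, $n \mapsto \ulcorner A_n \urcorner$ witnesses $A \leq_m T_P$ (many-one reducibility): by mutual independence of the $A_n$ over ${\sf J}$ we have $T \vdash A_n$ iff $n \in A$. For the other direction, $T_P \leq_{{\sf tt}} A$: given a sentence $\phi$, by Theorem \ref{J thm}(2) I can effectively find a Boolean combination $\beta$ of $A_{n_1}, \ldots, A_{n_k}$ equivalent to $\phi$ over ${\sf J}$; by mutual independence, $\phi$ is provable in $T$ iff $\beta$ is a tautological consequence (treating the $A_{n_i}$ as propositional atoms) of $\{A_{n_i} : n_i \in A\}$, and whether this holds depends only on which of $n_1, \ldots, n_k$ lie in $A$. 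Hence provability of $\phi$ is decided by a truth-table condition with queries $n_1, \ldots, n_k$, giving a recursive $f(\ulcorner\phi\urcorner) = \langle\langle n_1,\dots,n_k\rangle,\alpha\rangle$ and so $T_P \leq_{{\sf tt}} A$. Consequently $T_P \equiv_T A$, so $T$ has Turing degree $\mathbf{0}^{\prime}$. Finally, if $T$ were {\sf Creative}, then by Fact \ref{fact on creative}(1) every RE set would be $m$-reducible, hence ${\sf tt}$-reducible, to $T_P$, and since $T_P \equiv_{{\sf tt}} A$ this would make $A$ ${\sf tt}$-complete, contradicting the choice of $A$. Thus $T$ is $\mathbf{0}^{\prime}$ but not {\sf Creative}.

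I expect the main obstacle to be the very first step: extracting from Fact \ref{tt-degree in zero sharp} a genuinely \emph{RE} witness $A$. Knowing that $\mathbf{0}^{\prime}$ splits into at least two ${\sf tt}$-degrees yields a non-complete ${\sf tt}$-degree inside $\mathbf{0}^{\prime}$, but one must ensure such a degree is occupied by an RE set, which is what the standard constructions behind the fact deliver (building a Turing-complete RE set while diagonalising against all ${\sf tt}$-reductions that would make it complete). The remaining work — consistency and recursive axiomatisability of $T$, the reduction $A \leq_m T_P$, and especially the verification that provability in $T$ collapses to a truth-table on the finitely many indices appearing in the ${\sf J}$-normal form of $\phi$ — is routine given Theorem \ref{J thm} and the mutual independence of the $A_n$.
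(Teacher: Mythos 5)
Your proposal is correct and follows essentially the same route as the paper: both take an RE set $X$ of Turing degree $\mathbf{0}^{\prime}$ lying outside the ${\sf tt}$-degree of the creative sets (Fact \ref{tt-degree in zero sharp}), form ${\sf J}+\{A_n : n\in X\}$, and show via the map $n\mapsto\ulcorner A_n\urcorner$ and the effective Boolean normal form of Theorem \ref{J thm}(2) that the theory is ${\sf tt}$-equivalent to $X$, hence $\mathbf{0}^{\prime}$ but not {\sf Creative}. Your extra care about the witness being RE is warranted but is exactly what the cited fact from Rogers supplies.
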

\begin{proof}\label{}
From Fact \ref{tt-degree in zero sharp}, suppose $X$ is in $\mathbf{0}^{\prime}$ but not in the unique {\sf tt}-degree of a creative set (i.e., not {\sf tt}-equivalent to any creative set). Let $T_X := {\sf J}+\{A_n \mid n\in X\}$. Note that $T_X$ is $\mathbf{0}^{\prime}$. We show that $T_X$ has the same {\sf tt}-degree as $X$. In the following, we identify $T_X$ with $(T_X)_P$.

Note that $X {\sf \leq_{tt}} T_X$ since $X$ is reducible to $T_X$ via the recursive function $g: n\mapsto \ulcorner A_n\urcorner$.
Now we show that $T_X {\sf \leq_{tt}} X$.

Given a sentence $\phi$, we want to know whether $T_X$ proves $\phi$. By Theorem \ref{J thm}, from $\phi$ we can effectively find $\phi^{\ast}$, a Boolean combination
of the $A_n$, such that ${\sf J}\vdash \phi\leftrightarrow \phi^{\ast}$. Let $\langle p_n: n\in\omega\rangle$ be the list of propositional variables, and $\psi$ be the  formula obtained by replacing $A_n$ with $p_n$ in $\phi^{\ast}$.

Note that the procedure to find $\psi$ starting from $\phi$ is effective: i.e., there exists a recursive function $f$ such that $f$ maps $\phi$ to $\psi$. We say that $\psi$ is satisfied by $X$ if $\psi$ has truth value 1 under the truth evaluation function $\sigma$ which satisfies $n\in X\Rightarrow  \sigma(p_n)=1$ and $n\notin X\Rightarrow  \sigma(p_n)=0$. Note that $T_X\vdash\phi$ iff $\psi$ is satisfied by $X$. This implies that $T_X {\sf \leq_{tt}} X$. Since $T_X$ has the same {\sf tt}-degree as $X$ and $X$ is not in the unique {\sf tt}-degree of a creative set, we have $T_X$ is not {\sf Creative}.
\end{proof}

The following theorem is a summary of the relationships between {\sf Creative} theories and the other properties following `{\sf Creative}' in Remark \ref{convention}.

\begin{theorem}\label{thm on creative}
\begin{enumerate}[(1)]
  \item {\sf Creative} implies $\mathbf{0}^{\prime}$.
  \item The property $\mathbf{0}^{\prime}$ does not imply {\sf Creative}.
  \item {\sf Creative} does not imply {\sf REW}.
  \item The property {\sf REW} implies {\sf Creative}.
  \item {\sf Creative} does not imply any one of {\sf RFD}, {\sf RSS} and {\sf RSW}.
  \item  None of {\sf RFD}, {\sf RSS} and {\sf RSW}
implies {\sf Creative}.
\end{enumerate}
\end{theorem}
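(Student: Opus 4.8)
The plan is to assemble this summary theorem entirely from results established earlier in the paper, since each of its six clauses either restates an implication already proved or reuses a counterexample whose relevant properties have already been verified. The only genuinely new observation required is a bookkeeping one: the witnessing theories produced in Theorem \ref{EI does not imply REW} and in Theorem \ref{Shoefield theory} inherit the status ``{\sf Creative}'' (respectively ``not {\sf Creative}'') for free from implications already in hand, so no fresh construction is needed.

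For the two positive implications I would proceed directly. Clause (1) is immediate from Theorem \ref{relation about EI}(5), which already states that {\sf Creative} implies $\mathbf{0}^{\prime}$. Clause (4) is exactly Lemma \ref{REW imply creative}, where one checks that if an RE set $A$ is weakly represented by $\phi(x)$ then $n \mapsto \ulcorner\phi(\overline{n})\urcorner$ reduces $A$ to $T_P$, so by Fact \ref{fact on creative}(1) the theory is {\sf Creative}. Neither clause needs additional argument.

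For the separations placing {\sf Creative} strictly below the properties listed after it, I would reuse existing witnesses. Clause (2) is precisely Theorem \ref{zero sharp does not imply creative}: the theory $T_X = {\sf J} + \{A_n : n \in X\}$, with $X$ in $\mathbf{0}^{\prime}$ but not {\sf tt}-equivalent to any creative set, is $\mathbf{0}^{\prime}$ yet not {\sf Creative}. For clauses (3) and (5) I would take the single theory $T = {\sf Succ}^{-} + \{\phi_n : n \in B\} + \{\neg\phi_n : n \in C\}$ built in Theorem \ref{EI does not imply REW} from an {\sf EI} pair $(B,C)$, whose language $\{\mathbf{0},\mathbf{S}\}$ contains the symbols for which {\sf REW}, {\sf RSW}, {\sf RSS}, {\sf RFD} are even defined. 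That theory is {\sf EI}, hence {\sf Creative} by Theorem \ref{relation about EI}(4); but by Theorem \ref{thm on Succ} its weakly representable sets are exactly the finite and cofinite ones, so it fails {\sf REW} and {\sf RSW}, and therefore, via Lemma \ref{RFD imply RSS}, also fails {\sf RSS} and {\sf RFD}. Thus the same {\sf Creative} theory simultaneously establishes (3) and all three cases of (5).

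Finally, clause (6) follows from Theorem \ref{Shoefield theory}, which supplies a theory that is {\sf RFD} but not {\sf Creative} (indeed of Turing degree below $\mathbf{0}^{\prime}$). Since {\sf RFD} implies {\sf RSS} implies {\sf RSW} by Lemma \ref{RFD imply RSS}, that one theory is {\sf RFD}, {\sf RSS}, and {\sf RSW} yet not {\sf Creative}, so it refutes all three implications at once. I do not expect any real obstacle in this theorem; the only points demanding care are matching each counterexample to the correct \emph{direction} of implication and confirming that the ${\sf Succ}^{-}$-based witness lives in a language containing $\mathbf{0}$ and $\mathbf{S}$, so that the representability properties appearing in (3) and (5) are meaningful for it.
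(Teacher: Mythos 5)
Your proposal is correct and follows essentially the same route as the paper: clauses (1), (2), (4), (6) are cited to the same results (Theorem \ref{relation about EI}, Theorem \ref{zero sharp does not imply creative}, Lemma \ref{REW imply creative}, Theorem \ref{Shoefield theory} with Lemma \ref{RFD imply RSS}), and for (3) and (5) the paper likewise combines ``{\sf EI} implies {\sf Creative}'' with Theorem \ref{EI does not imply REW} — you merely make the ${\sf Succ}^{-}$-based witness explicit where the paper argues at the level of implications.
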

\begin{proof}\label{}
\begin{enumerate}[(1)]
  \item Follows from Theorem  \ref{relation about EI}.
  \item Follows from Theorem \ref{zero sharp does not imply creative}.
  \item From Theorem  \ref{EI does not imply REW}, {\sf EI} does not imply {\sf REW}. On the other hand,  {\sf EI} implies {\sf Creative} by Theorem  \ref{relation about EI}.
  \item Follows from Lemma \ref{REW imply creative}.
   \item From Theorem  \ref{EI does not imply REW},  {\sf EI} does not imply any one of {\sf RFD}, {\sf RSS} and {\sf RSW}. On the other hand, {\sf EI} implies {\sf Creative} by Theorem  \ref{relation about EI}.
  \item Follows from Theorem \ref{Shoefield theory} and Lemma \ref{RFD imply RSS}.
\end{enumerate}
\end{proof}

The following diagram  is a summary of the relationships between {\sf Creative} theories and the other properties in Remark \ref{convention}.\smallskip

\begin{tikzpicture}

\begin{scope}[yshift=-1cm]

\node (1) at(4,2) {\sf Creative};
\node (2) at(0,4) {\sf Rosser};
\node (3) at(2,4) {\sf EI};
\node (4) at(4,4) {\sf RI};

\node (5) at(6,4) {\sf TP};
\node (6) at(8,4) {\sf EHU};
\node (7) at(8,2) {\sf EU};

\node (8) at(8,0) {$\mathbf{0}^{\prime}$};
\node (9) at(6,0) {\sf REW};
\node (10) at(4,0) {\sf RFD};
\node (11) at(2,0) {\sf RSS};

\node (12) at(0,0) {\sf RSW};

\draw[-latex,bend left, red] (1) edge (2);
\draw[->]  (2) -- (1);

\draw[-latex,bend right, red]  (1) edge (3);
\draw[->] (3)-- (1);

\draw[->, red] (1)--  (4);
\draw[-latex,bend left, red]  (4) edge (1);

\draw[-latex,bend right, red] (1) edge  (5);
\draw[->, red] (5)-- (1);

\draw[-latex,bend right, red]  (1) edge (6);
\draw[->, red] (6)--  (1);

\draw[-latex,bend right, red]  (1) edge (7);
\draw[->, red] (7)--  (1);

\draw[-latex,bend left]  (1) edge (8);
\draw[->, red] (8)--   (1);

\draw[-latex,bend right, red]  (1) edge (9);
\draw[->]  (9) --  (1);

\draw[-latex,bend right, red]  (1) edge (10);
\draw[->, red] (10)-- (1);

\draw[-latex,bend right, red]  (1) edge (11);
\draw[->, red] (11)-- (1);

\draw[-latex,bend right, red]  (1) edge (12);
\draw[->, red] (12)-- (1);
\end{scope}
\end{tikzpicture}
\smallskip

The following theorem is a summary of the relationships between $\mathbf{0}^{\prime}$ theories and the other properties following `$\mathbf{0}^{\prime}$' in Remark  \ref{convention}.

\begin{theorem}\label{thm on zero}
\begin{enumerate}[(1)]
  \item The property $\mathbf{0}^{\prime}$ does not imply {\sf REW}.
  \item The property {\sf REW} implies $\mathbf{0}^{\prime}$.
  \item The property $\mathbf{0}^{\prime}$ does not imply any one of {\sf RFD}, {\sf RSS}, {\sf RSW}.
  \item    None of {\sf RFD}, {\sf RSS}, {\sf RSW}
implies $\mathbf{0}^{\prime}$.
\end{enumerate}
\end{theorem}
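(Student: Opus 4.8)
The plan is to reduce all four items to results already established, using the property {\sf Creative} as a bridge for the two {\sf REW} statements and the implication chain ${\sf RFD}\Rightarrow{\sf RSS}\Rightarrow{\sf RSW}$ from Lemma \ref{RFD imply RSS} for the three representability properties. No part appears to require new combinatorial work; each is a short composition of prior lemmas.

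For (2) I would simply compose Lemma \ref{REW imply creative}, which gives ${\sf REW}\Rightarrow{\sf Creative}$, with Theorem \ref{relation about EI}(5), which gives ${\sf Creative}\Rightarrow\mathbf{0}^{\prime}$. For (1) I would invoke Theorem \ref{zero sharp does not imply creative} to fix a theory that is $\mathbf{0}^{\prime}$ but not {\sf Creative}; since {\sf REW} implies {\sf Creative} by Lemma \ref{REW imply creative}, that theory cannot be {\sf REW}, so $\mathbf{0}^{\prime}$ does not imply {\sf REW}.

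For (3) the key observation is that, by Lemma \ref{RFD imply RSS}, a theory failing {\sf RSW} automatically fails {\sf RSS} and {\sf RFD}, so it suffices to exhibit a single $\mathbf{0}^{\prime}$ theory that is not {\sf RSW}. The {\sf EI} theory $T={\sf Succ^{-}}+\{\phi_n:n\in B\}+\{\neg\phi_n:n\in C\}$ from Theorem \ref{EI does not imply REW} is exactly such a witness: it is {\sf EI}, hence {\sf Creative} and therefore $\mathbf{0}^{\prime}$ by Theorem \ref{relation about EI}, while Theorem \ref{thm on Succ} shows its weakly representable sets are only the finite and cofinite ones, so it is not {\sf RSW}. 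For (4) I would run the same chain in the reverse direction using Theorem \ref{Shoefield theory}: the theory there is {\sf RFD} (hence {\sf RSS} and {\sf RSW} by Lemma \ref{RFD imply RSS}) yet has Turing degree strictly below $\mathbf{0}^{\prime}$, so none of {\sf RFD}, {\sf RSS}, {\sf RSW} can imply $\mathbf{0}^{\prime}$.

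Since each part is a short composition of prior results, I do not expect a genuine obstacle; the only point that needs care is to confirm that the witnessing theories have the claimed Turing degrees. Specifically, I must make sure that the {\sf EI} construction in part (3) really lands in $\mathbf{0}^{\prime}$ — this is guaranteed by the chain ${\sf EI}\Rightarrow{\sf Creative}\Rightarrow\mathbf{0}^{\prime}$ — and that Theorem \ref{Shoefield theory} genuinely places its {\sf RFD} theory strictly below $\mathbf{0}^{\prime}$, which is precisely what its statement asserts.
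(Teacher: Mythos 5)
Your proposal is correct, and items (2), (3) and (4) follow the paper's own proof essentially verbatim: (2) composes Lemma \ref{REW imply creative} with Theorem \ref{relation about EI}, (3) uses the {\sf EI} theory of Theorem \ref{EI does not imply REW} together with ${\sf EI}\Rightarrow\mathbf{0}^{\prime}$ and the contrapositive of ${\sf RFD}\Rightarrow{\sf RSS}\Rightarrow{\sf RSW}$, and (4) uses Shoenfield's theory from Theorem \ref{Shoefield theory}. The only divergence is in (1): the paper reuses the same {\sf EI} witness (citing that {\sf EI} does not imply {\sf REW} while {\sf EI} implies $\mathbf{0}^{\prime}$), whereas you instead take the $\mathbf{0}^{\prime}$-but-not-{\sf Creative} theory of Theorem \ref{zero sharp does not imply creative} and rule out {\sf REW} via Lemma \ref{REW imply creative}. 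Both witnesses work; your route for (1) leans on the harder truth-table-degree construction where the paper's leans on the ${\sf Succ^{-}}$ construction, but each is a one-line composition of results already proved, so the difference is cosmetic.
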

\begin{proof}\label{}
\begin{enumerate}[(1)]
  \item From  Theorem  \ref{EI does not imply REW}, {\sf EI} does not imply {\sf REW}. On the other hand, {\sf EI} implies $\mathbf{0}^{\prime}$ by Theorem  \ref{relation about EI}.
  \item From Lemma \ref{REW imply creative}, {\sf REW} implies {\sf Creative}. On the other hand, {\sf Creative} implies $\mathbf{0}^{\prime}$.
  \item From Theorem  \ref{EI does not imply REW}, {\sf EI} does not imply any one of {\sf RFD}, {\sf RSS}, {\sf RSW}. On the other hand, {\sf EI} implies $\mathbf{0}^{\prime}$.
   \item   The theory in Theorem \ref{Shoefield theory} is {\sf RFD} (and hence {\sf RSS} and {\sf RSW}) but has Turing degree less than $\mathbf{0}^{\prime}$.
\end{enumerate}
\end{proof}

The following diagram  is a summary of the relationships between {\sf $\mathbf{0}^{\prime}$} theories and the other properties in Remark \ref{convention}.\smallskip

\begin{tikzpicture}

\begin{scope}[yshift=-1cm]

\node (1) at(4,2) {$\mathbf{0}^{\prime}$};
\node (2) at(0,4) {\sf Rosser};
\node (3) at(2,4) {\sf EI};
\node (4) at(4,4) {\sf RI};

\node (5) at(6,4) {\sf TP};
\node (6) at(8,4) {\sf EHU};
\node (7) at(8,2) {\sf EU};

\node (8) at(8,0) {\sf Creative};
\node (9) at(6,0) {\sf REW};
\node (10) at(4,0) {\sf RFD};
\node (11) at(2,0) {\sf RSS};

\node (12) at(0,0) {\sf RSW};

\draw[-latex,bend left, red] (1) edge (2);
\draw[->]  (2) -- (1);

\draw[-latex,bend right, red]  (1) edge (3);
\draw[->] (3)-- (1);

\draw[->, red] (1)--  (4);
\draw[-latex,bend right, red]  (4) edge (1);

\draw[-latex,bend left, red] (1) edge  (5);
\draw[->, red] (5)-- (1);

\draw[-latex,bend right, red]  (1) edge (6);
\draw[->, red] (6)--  (1);

\draw[-latex,bend right, red]  (1) edge (7);
\draw[->, red] (7)--  (1);

\draw[-latex,bend right, red]  (1) edge (8);
\draw[->] (8)--   (1);

\draw[-latex,bend right, red]  (1) edge (9);
\draw[->]  (9) --  (1);

\draw[-latex,bend right, red]  (1) edge (10);
\draw[->, red] (10)-- (1);

\draw[-latex,bend right, red]  (1) edge (11);
\draw[->, red] (11)-- (1);

\draw[-latex,bend right, red]  (1) edge (12);
\draw[->, red] (12)-- (1);
\end{scope}
\end{tikzpicture}
\smallskip

Finally,  we discuss the relationships between {\sf REW}, {\sf RFD}, {\sf RSS} as well as {\sf RSW} theories, and the properties following `{\sf REW}' in Remark \ref{convention}.

\begin{theorem}\label{thm on REW}
\begin{enumerate}[(1)]
  \item The property {\sf REW}  does not imply either of {\sf RFD} and {\sf RSS}.
  \item The property {\sf REW} implies {\sf RSW}.
  \item None of {\sf RFD}, {\sf RSS}, {\sf RSW} implies {\sf REW}.
\end{enumerate}
\end{theorem}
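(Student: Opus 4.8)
The plan is to derive all three parts from results already in hand, the crucial inputs being Theorem \ref{REW not imply EU} (a {\sf REW} theory need not be {\sf EU}) and the Shoenfield theory of Theorem \ref{Shoefield theory}.

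Part (2) is immediate from the definitions, as already recorded in Lemma \ref{Rosser implies RSS}(1): every recursive set is in particular an RE set, so if all RE sets are weakly representable in $T$, then so are all recursive sets, giving {\sf REW} $\Rightarrow$ {\sf RSW}. No further work is needed here.

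For part (1) I would route the argument through {\sf EU}. Both {\sf RFD} and {\sf RSS} entail {\sf EU}: by Lemma \ref{RFD imply RSS}(1) we have {\sf RFD} $\Rightarrow$ {\sf RSS}, by Theorem \ref{RSS imply RI} we have {\sf RSS} $\Rightarrow$ {\sf RI}, and by Theorem \ref{relation about EI}(3) we have {\sf RI} $\Rightarrow$ {\sf EU}. Theorem \ref{REW not imply EU} furnishes a theory that is {\sf REW} but not {\sf EU}; such a theory can be neither {\sf RSS} nor {\sf RFD}, since either property would force it to be {\sf EU}. Hence {\sf REW} implies neither {\sf RFD} nor {\sf RSS}.

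For part (3) I would use a single witness. Theorem \ref{Shoefield theory} provides a theory $T$ in which every recursive function is definable, so $T$ is {\sf RFD}; by Lemma \ref{RFD imply RSS} it is therefore also {\sf RSS} and {\sf RSW}. By the same theorem, no non-recursive set is weakly representable in $T$; since non-recursive RE sets exist (for instance the halting set), $T$ is not {\sf REW}. This one theory simultaneously shows that none of {\sf RFD}, {\sf RSS}, {\sf RSW} implies {\sf REW}. I do not expect any genuine obstacle: the whole theorem is bookkeeping with previously established implications, and the only point requiring a moment's care is observing that the clause ``no non-recursive set is weakly representable'' in Theorem \ref{Shoefield theory} contradicts {\sf REW} precisely because {\sf REW} demands weak representability of every RE set, including the non-recursive ones.
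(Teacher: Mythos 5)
Your proposal is correct and follows essentially the same route as the paper: part (2) via Lemma \ref{Rosser implies RSS}(1), part (1) by combining Theorem \ref{REW not imply EU} with the fact that {\sf RFD} and {\sf RSS} each imply {\sf EU} (your chain {\sf RFD}$\Rightarrow${\sf RSS}$\Rightarrow${\sf RI}$\Rightarrow${\sf EU} is exactly how the paper's Fact \ref{fact on EU} is proved), and part (3) using the Shoenfield theory of Theorem \ref{Shoefield theory} as a single witness. No gaps.
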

\begin{proof}\label{}
\begin{enumerate}[(1)]
\item From Theorem \ref{REW not imply EU}, {\sf REW} does not imply {\sf EU}. On the other hand, both {\sf RFD} and {\sf RSS} imply {\sf EU} by Fact \ref{fact on EU}.
\item Follows from Lemma \ref{Rosser implies RSS}(1).
\item The theory in Theorem \ref{Shoefield theory} is {\sf RFD} (and hence {\sf RSS} and {\sf RSW}) but it is not {\sf REW} since no non-recursive set is weakly representable in it.
\end{enumerate}
\end{proof}

The following diagram  is a summary of the relationships between {\sf REW} theories and the other properties in Remark \ref{convention}.\smallskip

\begin{tikzpicture}

\begin{scope}[yshift=-1cm]

\node (1) at(4,2) {\sf REW};
\node (2) at(0,4) {\sf Rosser};
\node (3) at(2,4) {\sf EI};
\node (4) at(4,4) {\sf RI};

\node (5) at(6,4) {\sf TP};
\node (6) at(8,4) {\sf EHU};
\node (7) at(8,2) {\sf EU};

\node (8) at(8,0) {\sf Creative};
\node (9) at(6,0) {$\mathbf{0}^{\prime}$};
\node (10) at(4,0) {\sf RFD};
\node (11) at(2,0) {\sf RSS};

\node (12) at(0,0) {\sf RSW};

\draw[-latex,bend left, red] (1) edge (2);
\draw[->, green]  (2) -- (1);

\draw[-latex,bend right, red]  (1) edge (3);
\draw[->, red] (3)-- (1);

\draw[->, red] (1)--  (4);
\draw[-latex,bend left, red]  (4) edge (1);

\draw[-latex,bend right, red] (1) edge  (5);
\draw[->, red] (5)-- (1);

\draw[-latex,bend right, red]  (1) edge (6);
\draw[->, red] (6)--  (1);

\draw[-latex,bend right, red]  (1) edge (7);
\draw[->, red] (7)--  (1);

\draw[-latex,bend right, red]  (1) edge (8);
\draw[->, red] (8)--   (1);

\draw[-latex,bend right]  (1) edge (9);
\draw[->, red]  (9) --  (1);

\draw[-latex,bend right, red]  (1) edge (10);
\draw[->, red] (10)-- (1);

\draw[-latex,bend right, red]  (1) edge (11);
\draw[->, red] (11)-- (1);

\draw[-latex,bend right]  (1) edge (12);
\draw[->, red] (12)-- (1);
\end{scope}
\end{tikzpicture}

\begin{theorem}\label{}
\begin{enumerate}[(1)]
  \item The property {\sf RFD} implies {\sf RSS} and {\sf RSW}.
  \item The property {\sf RSS} does not  imply {\sf RFD}.
  \item The property {\sf RSS} implies {\sf RSW}.
  \item The property {\sf RSW} does not imply {\sf RSS}.
 \item  The property {\sf RSW} does not imply {\sf RFD}.
\end{enumerate}
\end{theorem}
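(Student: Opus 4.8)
The plan is to dispatch the two positive implications directly from the machinery already in place and to reuse the two counterexample theories that have done most of the work in the preceding sections. Concretely, parts (1) and (3) are immediate, and parts (2), (4), (5) are handled by pointing at theories already shown to separate the relevant properties.

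For part (1), both {\sf RSS} and {\sf RSW} follow from {\sf RFD} by Lemma \ref{RFD imply RSS}: that lemma gives {\sf RFD} $\Rightarrow$ {\sf RSS} and {\sf RSS} $\Rightarrow$ {\sf RSW}, so chaining the two also yields {\sf RFD} $\Rightarrow$ {\sf RSW}. Part (3) is literally the second clause of the same lemma, so no further argument is needed for these. For part (2), I would exhibit a theory that is {\sf RSS} but not {\sf RFD}, and Putnam's theory {\sf E} from Example \ref{Putnam E} is exactly such a witness: Theorem \ref{RSS not imply EHU} already shows {\sf E} is {\sf RSS}, while Theorem \ref{thm on Rosser}(14) records that {\sf E} is not {\sf RFD}. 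Hence {\sf RSS} does not imply {\sf RFD}.

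For parts (4) and (5), I would reuse the theory $T := {\sf Succ} + \{P(\overline{i},\overline{n}): n \in W_i\}$ constructed in the proof of Theorem \ref{REW not imply EU}. That theory is {\sf REW}, and hence {\sf RSW} by Lemma \ref{Rosser implies RSS}(1), but it is \emph{not} {\sf EU}. For part (4), since {\sf RSS} implies {\sf RI} by Theorem \ref{RSS imply RI} and {\sf RI} implies {\sf EU} by Theorem \ref{relation about EI}(3), any failure of {\sf EU} forces a failure of {\sf RSS}; thus $T$ is {\sf RSW} but not {\sf RSS}. For part (5), {\sf RFD} implies {\sf EU} by Fact \ref{fact on EU} (equivalently, via {\sf RFD} $\Rightarrow$ {\sf RSS} from part (1) together with Theorem \ref{RSS imply RI}), so the same non-{\sf EU} theory $T$ cannot be {\sf RFD} either, giving {\sf RSW} does not imply {\sf RFD}.

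Since every step is a citation of an already-established implication or an already-constructed example, I do not expect a genuinely hard step. The only point requiring care is bookkeeping: confirming that each reused theory actually carries the positive property claimed of it (that {\sf E} is genuinely {\sf RSS} and that $T$ is genuinely {\sf REW}, hence {\sf RSW}), and that the implication chain used to refute the negative property — always routed through the failure of {\sf EU} for $T$, and through the failure of {\sf RFD} for {\sf E} — is exactly the one proved earlier. No new construction should be necessary.
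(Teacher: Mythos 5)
Your proposal is correct and follows essentially the same route as the paper: parts (1) and (3) from Lemma \ref{RFD imply RSS}, part (2) via Putnam's theory {\sf E} (which is {\sf RSS} by Theorem \ref{RSS not imply EHU} but not {\sf RFD}), and parts (4)--(5) via the {\sf REW}-but-not-{\sf EU} theory of Theorem \ref{REW not imply EU} together with the fact that {\sf RSS} (hence {\sf RFD}) implies {\sf EU}. The only cosmetic difference is that you route ``{\sf RSS} implies {\sf EU}'' through Theorem \ref{RSS imply RI} and Theorem \ref{relation about EI}(3) rather than citing Fact \ref{fact on EU} directly, which is the same argument.
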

\begin{proof}\label{}
\begin{enumerate}[(1)]
  \item Follows from Lemma \ref{RFD imply RSS}.
  \item Consider Putnam's theory {\sf E} in Example \ref{Putnam E}. From Theorem \ref{RSS not imply EHU}, {\sf E} is {\sf RSS}. But {\sf E} has only relation symbols without function symbols. Thus {\sf E} is not {\sf RFD}.
      \item Follows from the definition.
  \item From Theorem \ref{REW not imply EU}, {\sf REW} does not imply {\sf EU}. Let $T$ be a theory which is {\sf REW}
but not {\sf EU}. Since {\sf REW} implies {\sf RSW} and {\sf RSS} implies {\sf EU}, then  $T$ is {\sf RSW} but not {\sf RSS}.
      \item Follows from (4) since {\sf RFD} implies {\sf RSS}.
\end{enumerate}
\end{proof}

The following three diagrams  are a summary of the relationships among {\sf RFD},{\sf RSS} and {\sf RSW} theories and the other properties in Remark \ref{convention}.\smallskip

\begin{tikzpicture}
\begin{scope}[yshift=-1cm]

\node (1) at(4,2) {\sf RFD};
\node (2) at(0,4) {\sf Rosser};
\node (3) at(2,4) {\sf EI};
\node (4) at(4,4) {\sf RI};

\node (5) at(6,4) {\sf TP};
\node (6) at(8,4) {\sf EHU};
\node (7) at(8,2) {\sf EU};

\node (8) at(8,0) {\sf Creative};
\node (9) at(6,0) {$\mathbf{0}^{\prime}$};
\node (10) at(4,0) {\sf REW};
\node (11) at(2,0) {\sf RSS};

\node (12) at(0,0) {\sf RSW};

\draw[-latex,bend left, red] (1) edge (2);
\draw[->, red]  (2) -- (1);

\draw[-latex,bend right, red]  (1) edge (3);
\draw[->, red] (3)-- (1);

\draw[->] (1)--  (4);
\draw[-latex,bend right, red]  (4) edge (1);

\draw[-latex,bend left, green] (1) edge  (5);
\draw[->, red] (5)-- (1);

\draw[-latex,bend left, green]  (1) edge (6);
\draw[->, red] (6)--  (1);

\draw[-latex,bend left]  (1) edge (7);
\draw[->, red] (7)--  (1);

\draw[-latex,bend left, red]  (1) edge (8);
\draw[->, red] (8)--   (1);

\draw[-latex,bend right, red]  (1) edge (9);
\draw[->, red]  (9) --  (1);

\draw[-latex,bend right, red]  (1) edge (10);
\draw[->, red] (10)-- (1);

\draw[-latex,bend right]  (1) edge (11);
\draw[->, red] (11)-- (1);

\draw[-latex,bend right]  (1) edge (12);
\draw[->, red] (12)-- (1);
\end{scope}
\end{tikzpicture}
\smallskip

\begin{tikzpicture}
\begin{scope}[yshift=-1cm]

\node (1) at(4,2) {\sf RSS};
\node (2) at(0,4) {\sf Rosser};
\node (3) at(2,4) {\sf EI};
\node (4) at(4,4) {\sf RI};

\node (5) at(6,4) {\sf TP};
\node (6) at(8,4) {\sf EHU};
\node (7) at(8,2) {\sf EU};

\node (8) at(8,0) {\sf Creative};
\node (9) at(6,0) {$\mathbf{0}^{\prime}$};
\node (10) at(4,0) {\sf REW};
\node (11) at(2,0) {\sf RFD};
\node (12) at(0,0) {\sf RSW};

\draw[-latex,bend left, red] (1) edge (2);
\draw[->]  (2) -- (1);

\draw[-latex,bend right, red]  (1) edge (3);
\draw[->, red] (3)-- (1);

\draw[->] (1)--  (4);
\draw[-latex,bend right, red]  (4) edge (1);

\draw[-latex,bend left, green] (1) edge  (5);
\draw[->, red] (5)-- (1);

\draw[-latex,bend right, red]  (1) edge (6);
\draw[->, red] (6)--  (1);

\draw[-latex,bend right]  (1) edge (7);
\draw[->, red] (7)--  (1);

\draw[-latex,bend right, red]  (1) edge (8);
\draw[->, red] (8)--   (1);

\draw[-latex,bend right, red]  (1) edge (9);
\draw[->, red]  (9) --  (1);

\draw[-latex,bend right, red]  (1) edge (10);
\draw[->, red] (10)-- (1);

\draw[-latex,bend right, red]  (1) edge (11);
\draw[->] (11)-- (1);

\draw[-latex,bend right]  (1) edge (12);
\draw[->, red] (12)-- (1);
\end{scope}
\end{tikzpicture}
\smallskip

\begin{tikzpicture}
\begin{scope}[yshift=-1cm]

\node (1) at(4,2) {\sf RSW};
\node (2) at(0,4) {\sf Rosser};
\node (3) at(2,4) {\sf EI};
\node (4) at(4,4) {\sf RI};

\node (5) at(6,4) {\sf TP};
\node (6) at(8,4) {\sf EHU};
\node (7) at(8,2) {\sf EU};

\node (8) at(8,0) {\sf Creative};
\node (9) at(6,0) {$\mathbf{0}^{\prime}$};
\node (10) at(4,0) {\sf REW};
\node (11) at(2,0) {\sf RFD};
\node (12) at(0,0) {\sf RSS};

\draw[-latex,bend left, red] (1) edge (2);
\draw[->]  (2) -- (1);

\draw[-latex,bend right, red]  (1) edge (3);
\draw[->, red] (3)-- (1);

\draw[->, red] (1)--  (4);
\draw[-latex,bend left, red]  (4) edge (1);

\draw[-latex,bend right, red] (1) edge  (5);
\draw[->, red] (5)-- (1);

\draw[-latex,bend right, red]  (1) edge (6);
\draw[->, green] (6)--  (1);

\draw[-latex,bend right, red]  (1) edge (7);
\draw[->, red] (7)--  (1);

\draw[-latex,bend right, red]  (1) edge (8);
\draw[->, red] (8)--   (1);

\draw[-latex,bend right, red]  (1) edge (9);
\draw[->, red]  (9) --  (1);

\draw[-latex,bend right, red]  (1) edge (10);
\draw[->] (10)-- (1);

\draw[-latex,bend right, red]  (1) edge (11);
\draw[->] (11)-- (1);

\draw[-latex,bend right, red]  (1) edge (12);
\draw[->] (12)-- (1);
\end{scope}
\end{tikzpicture}

We conclude the paper with some questions that we did not answer in this paper. The theories in Questions (2)-(6) are understood
to be RE theories with numerals (the language of the theory has a constant $\mathbf{0}$ and a 1-ary function symbol $S$ such that we can define numerals in this language via $\mathbf{0}$ and $S$).\smallskip

\begin{question}
\begin{enumerate}[(1)]
  \item Does {\sf TP} imply {\sf RI}?
  \item Does Rosser imply {\sf REW}?
  \item Does {\sf RFD}  imply {\sf TP}?
  \item Does {\sf RSS} imply {\sf TP}?
  \item Does {\sf RFD} imply {\sf EHU}?
  \item Does {\sf EHU} imply {\sf RSW}?
\end{enumerate}
\end{question}

\end{document}